\numberwithin{equation}{section}
\newcommand{\udiv}{\mathrm{div}}
\newcommand{\udt}{\frac{\mathrm{d}}{\mathrm{d}t}}
\newtheorem{theorem}{Theorem}[section]
\newtheorem{lemma}{Lemma}[section]
\newtheorem{rem}{Remark}[section]
\newtheorem{corollary}{Corollary}[section]
\newtheorem{pro}{Proposition}[section]
\newcommand{\Rmnum}[1]{\expandafter\@slowromancap\romannumeral #1@}
\begin{document}
	\title{The Cauchy problem for an inviscid and non-diffusive Oldroyd-{B} model in two dimensions}
	
	\author[a]{Yuanzhi Tu}
	\author[a]{Yinghui Wang}
	\author[a]{Huanyao Wen \thanks{Corresponding author.}}
	\affil[a]{School of Mathematics, South China University of Technology, Guangzhou, China}
	
	%\author{{Y{\sc uanzhi}  T{\sc u}\footnote{School of Mathematics, South China University of Technology, Guangzhou 510641, P.R. China. e-mail:	yztumath@163.com.}, \quad Y{\sc inghui}  W{\sc ang}\footnote{School of Mathematics, South China	University of Technology, Guangzhou 510641, P.R. China. e-mail: yhwangmath@163.com.}, \quad H{\sc uanyao}  W{\sc en}\footnote{School of Mathematics, South China	University of Technology, Guangzhou 510641, P.R. China. e-mail: mahywen@scut.edu.cn.}}}

	\date{}
	\maketitle
	\renewcommand{\thefootnote}{}
	
	\footnote{ {E}-mail: mayztu@mail.scut.edu.cn(Tu); yhwangmath@163.com(Wang); mahywen@scut.edu.cn(Wen).}
	
	\begin{abstract}
		A two-dimensional inviscid and diffusive Oldroyd-B model was investigated by [T. M. Elgindi, F. Rousset, Commun. Pure Appl. Math.	 68 (2015), 2005--2021] where the global existence and uniqueness of the strong solution were established for arbitrarily large initial data. As pointed out by [A. V. Bhave, R. C. Armstrong, R. A. Brown, J. Chem. Phys., 95(1991), 2988-–3000], the diffusion coefficient is significantly smaller than other effects, it is interesting to study the non-diffusive model. In the present work, we obtain the global-in-time existence and uniqueness of the strong solution to the non-diffusive model with small initial data via deriving some uniform regularity estimates and taking vanishing diffusion limits. In addition, the large time behavior of the solution is studied and the optimal time-decay rates for each order of spatial derivatives are obtained. The main challenges focus on the lack of dissipation and regularity effects of the system and on the slower decay in the two-dimensional settings. A combination of the spectral analysis and the Fourier splitting method is adopted.
	\end{abstract}
	\vspace{4mm}
	
	{\noindent \textbf{Keywords:} An Oldroyd-B model; global existence and uniqueness; long time behavior; vanishing diffusion limits.}
	
	\vspace{4mm}
	{\noindent\textbf{AMS Subject Classification (2020):} 76A10, 76B03, 74H40.}
	\section{Introduction}
	The interest for viscoelastic fluids has increased considerably, due to their connections with applied sciences. The motion of the fluids can be described by the Navier-Stokes equations coupling some constitutive laws of different types, see \cite{Bird_1, Bird_2} for more details. In this paper, we consider the following  {O}ldroyd-{B} type model in Eulerian coordinates:
	\begin{equation} \label{Oldroyd_B_d}
		\begin{cases}
			\partial_tu+(u\cdot\nabla) u+\nabla p=K\, {\rm div}\tau,\\
			\partial_t\tau+(u\cdot\nabla)\tau+\beta\tau=\alpha\mathbb{D}(u),\\
			{\rm div}\,u=0,	\\[2mm]
			(u,\tau)(x,0)=(u_0,\tau_0)(x),
		\end{cases}
	\end{equation}
	on $\mathbb{R}^2$ $\times$ $(0,\infty)$. (\ref{Oldroyd_B_d}) with a diffusion term $-\mu\Delta\tau$ on the left-hand side of the equation of $\tau$ was investigated by Elgindi and Rousset in \cite{Elgindi  Rousset 2015} where the global existence and uniqueness of the strong solution were established for arbitrarily large initial data. In this paper, we aim to study the global wellposedness and the large time behavior of the non-diffusive model (\ref{Oldroyd_B_d}).

	We will give an overview of study of the model. In fact, it is a simplified model of the following classical incompressible {O}ldroyd-{B} model\footnote{\eqref{Oldroyd_B_d} is the case that $\mu=0$, $\nu=0$ and $Q=0$.}:
	\begin{eqnarray} \label{Oldroyd_B}
		\begin{cases}
			\partial_tu+(u\cdot\nabla) u+\nabla p-\nu\Delta u=K \,{\rm div}\tau,\\
			\partial_t\tau+(u\cdot\nabla)\tau-\mu\Delta\tau+\beta\tau=Q(\nabla u,\tau)+\alpha\mathbb{D}(u),\\
			{\rm div}\,u=0,
		\end{cases}
	\end{eqnarray}
	where $u=u(x,t)$, $p=p(x,t)$, and $\tau=\tau(x,t)$ denote the velocity field of the fluid, the scalar pressure, and the tangential part of the stress tensor represented by a symmetric matrix, respectively. $\mathbb{D}(u)=\frac12(\nabla u+ \nabla u^T)$ is the symmetric part of the velocity gradient. The nonlinear term $Q(\nabla u,\tau)$ is a bilinear form:
	\begin{equation*}
		Q(\nabla u,\tau)=\Omega\tau - \tau\Omega + b(\mathbb{D}(u)\tau+\tau\mathbb{D}(u)).
	\end{equation*}
	$\Omega=\frac12(\nabla u- \nabla u^T)$ is the skew-symmetric part of velocity gradient and $b\in[-1,1]$. Those physical coefficients $\alpha,\beta ,\mu, \nu, K$ are constants that satisfy  $\alpha,\beta, K,\mu, \nu>0.$
	
	As pointed out by Bhave, Armstrong and Brown (\cite{Bhave 1991}), the diffusion coefficient  $\mu$ is significantly smaller than other effects. Thus some early works on the mathematical theory of the system \eqref{Oldroyd_B} focused on the non-diffusive case (i.e. $\nu>0,\mu = 0$ in \eqref{Oldroyd_B}). In this case, the model \eqref{Oldroyd_B} without diffusive term was first introduced by {O}ldroyd (\cite{Oldroyd 1958})  to describe the behavior of  viscoelastic fluids, which consists of both viscous and elastic components, and thus behaves as viscous fluid in some circumstances and as elastic solid in others. For the initial-boundary value problem, Guillop\'{e} and Saut(\cite{Guillo 1990}) established the local wellposedness of strong solutions in Sobolev space $H^s$  and obtained the global existence and uniqueness with small initial data and small coupling parameter $\alpha$. Later, Fern\'{a}ndez-Cara, Guill\'{e}n, and Ortega (\cite{Ortega 1998}) extended the result in the $L^p$ settings. Molinet and Talhouk (\cite{Molinet 2004}) proved that the results obtained by \cite{Guillo 1990} remain true without any restriction on the smallness of the coupling parameter. When considering the exterior domains, one needs to overcome the difficulty caused by both the boundary effect and unboundedness of the domain. Hieber, Naito, and Shibata (\cite{Hieber Naito 2012}) obtained the unique global strong solution with small initial data and small coupling parameter, see also \cite{Fang  Hieber Zi 2013} by Fang, Hieber, and Zi for the non-small coupling parameter case.  Chemin and Masmoudi (\cite{Chemin 2001}) studied the global wellposeness in the framework of critical Besov spaces and some blow-up criteria were also obtained. See also \cite{Chen Miao  2008, Zi 2014} for the case of the non-small coupling parameter in critical Besov spaces. Lions and Masmoudi (\cite{Lions 2000}) considered the case that $b=0$ and proved the existence of global weak solution for arbitrarily large initial data. In fact, for the case $b\neq0$, it is still open. For some studies of blow-up criteria, please refer to \cite{Lei 2010, Kupferman  2008}. Lei (\cite{Lei  2006}) obtained the global existence of classical solutions via the incompressible limit in periodic domains. Recently, Hieber, Wen, and Zi (\cite{Hieber Wen 2019}) studied the long time behaviors of the solutions in three dimensions and obtained the same decay rates as the heat equation, see also the extension by Huang, Wang, Wen, and Zi (\cite{Huang 2022}). For the case of infinite Weissenberg number, an energetic variational approach was first introduced by Lin, Liu, and Zhang (\cite{Lin-Liu-Zhang}) to understand the physical structure for the related systems (see for instance \cite{Hu-Lin,Hu-Wu,Lai,Lei1,Lei2,Lin} for more progress).
	
	For the diffusive model (i.e. $\mu > 0$ in \eqref{Oldroyd_B}),
	Constantin and Kliegl (\cite{Constantin 2012}) proved the global wellposedness of strong solutions for the two-dimensional Cauchy problem   with large initial data  and $\nu>0$. For the inviscid case, Elgindi and Rousset (\cite{Elgindi  Rousset 2015}) proved that the problem \eqref{Oldroyd_B} is global wellposed in $\mathbb{R}^2$ provided that the initial data is small enough. Later, Elgindi and Liu (\cite{Elgindi Liu 2015}) extended the results to the three-dimensional case. Very recently, Huang, Wang, Wen and Zi (\cite{Huang 2022}) obtained the optimal decay estimates with vanishing viscosity ($\nu\geq 0$) in three dimensions. When $\nu=0$, Deng, Luo and Yin (\cite{Yin_Deng}) obtained the global wellposedness of strong solutions and the $H^1$ time-decay rate as $(1+t)^{-\frac12}$ with small initial data in $\mathbb{R}^2$. When $\nu=0$ and $Q=0$, Elgindi and Rousset (\cite{Elgindi  Rousset 2015})
	established the global existence and uniqueness of strong  solutions  in $\mathbb{R}^2$.
	More precisely, they proved the following result with the diffusion coefficient $\mu>0$.
	\begin{pro}[Theorem 1.1, \cite{Elgindi  Rousset 2015}]\label{proposition_1}
		Assume that the initial data satisfy $(u_0,\tau_0)\in H^{s}(\mathbb{R}^2)$ with $\mathrm{div}\, u_0 = 0,\tau_0$ symmetric  and $s>2$, there exists a unique global solution $(u,\tau)\in C([0,\infty);H^{s}(\mathbb{R}^2))$  to the initial-value problem of \eqref{Oldroyd_B} with $\nu=0$ and $Q=0$.
	\end{pro}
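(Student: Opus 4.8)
The plan is to combine a standard local existence theory with global-in-time a priori estimates, the latter exploiting the only two dissipative mechanisms available for this inviscid ($\nu=0$), diffusive ($\mu>0$) model: the parabolic smoothing of $\tau$ coming from $-\mu\Delta\tau$, and the algebraic cancellation in the velocity--stress coupling. Since the momentum equation carries no viscosity, the entire difficulty is to prevent the velocity gradient from blowing up in finite time, and the role of the diffusion is precisely to make this possible in two dimensions for \emph{arbitrary} (not small) initial data.

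First I would establish local well-posedness in $H^s$, $s>2$, by a Friedrichs mollification (or iteration) scheme together with uniform energy estimates. Here the apparent loss of one derivative in the coupling --- $K\,\mathrm{div}\,\tau$ in the $u$-equation against $\alpha\mathbb{D}(u)$ in the $\tau$-equation --- is removed by a weighted estimate: testing the momentum equation with $u$ and the stress equation with $\tfrac{K}{\alpha}\tau$ and using that $\tau$ is symmetric and $u$ divergence free, so that
\begin{equation*}
K\int_{\mathbb{R}^2}\mathrm{div}\,\tau\cdot u\,\ud x+K\int_{\mathbb{R}^2}\mathbb{D}(u):\tau\,\ud x=0.
\end{equation*}
This cancellation persists after applying $\Lambda^{s}$ (only commutators, controlled by Kato--Ponce estimates, survive), so that $s>2$ and the embedding $H^s\hookrightarrow W^{1,\infty}$ suffice to close the local bounds and produce a unique solution on a maximal interval $[0,T^*)$.

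The central step is to show $T^*=\infty$ via bounds that are finite on every $[0,T]$. The basic energy identity reads
\begin{equation*}
\tfrac12\udt\Big(\|u\|_{L^2}^2+\tfrac{K}{\alpha}\|\tau\|_{L^2}^2\Big)+\tfrac{K\mu}{\alpha}\|\nabla\tau\|_{L^2}^2+\tfrac{K\beta}{\alpha}\|\tau\|_{L^2}^2=0,
\end{equation*}
giving $L^\infty_tL^2$ control of $(u,\tau)$ and $\tau\in L^2_tH^1$. The hard part is to upgrade this to a bound on $\int_0^T\|\nabla u\|_{L^\infty}\,\ud t$, the quantity governing continuation in the spirit of Beale--Kato--Majda. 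I would take the curl of the momentum equation to obtain $\partial_t\omega+u\cdot\nabla\omega=K\,\nabla^\perp\cdot\mathrm{div}\,\tau$, whose source carries two derivatives of $\tau$. These two derivatives are recovered from maximal parabolic regularity for the heat equation satisfied by $\tau$, through which $\|\tau\|_{L^2_t\dot H^{2}}$ is controlled by $\|\tau_0\|_{\dot H^1}$ together with $\|\nabla u\|_{L^2_tL^2}\sim\|\omega\|_{L^2_tL^2}$ (the transport and lower-order terms in the $\tau$-equation being absorbed along the way). Feeding this back into the vorticity estimate yields a closed Gronwall inequality for $\|\omega\|_{L^2}^2$ coupled with $\int_0^t\|\tau\|_{\dot H^2}^2$, hence finite (a priori exponentially growing) bounds on each $[0,T]$. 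A logarithmic (Brezis--Gallouet type) interpolation inequality then turns the control of $\|\omega\|_{L^\infty_tL^2}$ and $\|\omega\|_{H^{s-1}}$ into the desired bound on $\int_0^T\|\nabla u\|_{L^\infty}\,\ud t$.

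With $\int_0^T\|\nabla u\|_{L^\infty}\,\ud t<\infty$ in hand, propagating the full $H^s$ norm is routine: applying $\Lambda^s$, invoking the top-order cancellation above and Kato--Ponce commutator estimates, one arrives at
\begin{equation*}
\udt\Big(\|u\|_{H^s}^2+\tfrac{K}{\alpha}\|\tau\|_{H^s}^2\Big)\lesssim\big(1+\|\nabla u\|_{L^\infty}\big)\Big(\|u\|_{H^s}^2+\|\tau\|_{H^s}^2\Big),
\end{equation*}
and Gronwall closes the $H^s$ bound on $[0,T]$, so the solution extends to all times; uniqueness I would prove at the $L^2$ level for the difference of two solutions, using the $L^\infty$ bounds on $\nabla u$ and $\tau$ to absorb the nonlinear terms. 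I expect the genuine obstacle to be the vorticity step: making rigorous the \emph{effective dissipation} that the coupling induces on $u$ --- that is, correctly balancing the two-derivative source $K\,\nabla^\perp\cdot\mathrm{div}\,\tau$ against the time-integrated smoothing of $\tau$ --- since this is exactly the place where the absence of velocity viscosity has to be compensated.
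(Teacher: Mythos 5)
This proposition is not proved in the paper at all: it is quoted as Theorem 1.1 of the cited reference (Elgindi--Rousset, CPAM 2015) and used as a black box for the approximate system \eqref{Oldroyd_B_1}. So your attempt has to be measured against that paper's argument. Much of your setup is sound: the local theory with the weighted cancellation $K\langle\operatorname{div}\tau,u\rangle+K\langle\mathbb{D}(u),\tau\rangle=0$, the basic energy law, and even the first half of your continuation argument. Indeed, testing the vorticity equation in $L^2$ and the stress equation in $\dot H^1$, and using that the energy identity makes $\|\nabla\tau\|_{L^2}^2$ integrable in time (this is where $\mu>0$ enters), one gets finite, exponentially growing bounds for $\|\omega\|_{L^\infty_TL^2}$, $\|\tau\|_{L^\infty_TH^1}$ and $\int_0^T\|\tau\|_{\dot H^2}^2\,\ud s$ for arbitrary data, exactly as you claim.

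The genuine gap is your final interpolation step. In two dimensions, control of $\omega$ in $L^\infty_TL^2$ together with the $H^{s-1}$ norm does \emph{not} give $\int_0^T\|\nabla u\|_{L^\infty}\,\ud t<\infty$ by a Brezis--Gallouet inequality: logarithmic interpolation only bridges the gap between $L^\infty$ and norms at the \emph{same} derivative level (one needs $\omega\in L^\infty$, or $\omega\in H^1$), whereas $\omega\in L^2$ sits a full derivative below $\nabla u\in L^\infty$. Interpolating across a full derivative costs a positive power $\theta$ of the top norm, and the resulting inequality $\udt Z\lesssim Z^{1+\theta}$ is superlinear and does not close. If you instead try to first bound $\|\nabla\omega\|_{L^2}$, the stretching term $\nabla u\cdot\nabla\omega$ reintroduces $\|\nabla u\|_{L^\infty}$ and produces an inequality of the type $\udt Y\lesssim Y^{3/2}\log^{1/2}(e+Z)$, again non-closing; the parabolic dissipation of $\tau$ cannot absorb this term since it contains no $\tau$. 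The idea you are missing --- the heart of Elgindi--Rousset's proof, which your last sentence correctly senses but does not supply --- is a structural cancellation, not maximal regularity. Setting $S=\operatorname{curl}\operatorname{div}\tau$, one has $\partial_t\omega+u\cdot\nabla\omega=KS$ and $\partial_tS+u\cdot\nabla S-\mu\Delta S+\beta S=\frac{\alpha}{2}\Delta\omega+[\,\text{commutator}\,]$, so the combination $\Gamma=\omega-\frac{K}{\mu}\Delta^{-1}S$ satisfies the \emph{damped} transport equation $\partial_t\Gamma+u\cdot\nabla\Gamma+\frac{\alpha K}{2\mu}\Gamma=\text{lower order}$, because the source $KS$ and the $\Delta^{-1}$-regularized feedback $\frac{\alpha}{2}\Delta\omega$ cancel exactly with the weight $K/\mu$. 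Since $\Delta^{-1}\operatorname{curl}\operatorname{div}$ is a zero-order (Calder\'on--Zygmund) operator, this hidden damping yields strong vorticity control for arbitrary data \emph{without} first knowing $\|\nabla u\|_{L^\infty}$, and only then do the BKM/Brezis--Gallouet arguments you invoke close the $H^s$ estimate.
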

	It is interesting to see whether the solution obtained in Proposition \ref{proposition_1} exists globally or not for the non-diffusive case.
	\subsection{Main results}
	Our aim in this paper is to investigate the global-in-time existence and uniqueness and the optimal time-decay rates of the solutions to the initial-value problem of \eqref{Oldroyd_B_d}. The first main result concerning the global existence and uniqueness is stated as follows.
	\begin{theorem}\label{wellposedness}
		Assume that $(u_0,\tau_0)\in H^3(\mathbb{R}^2)$ with $\mathrm{div} \,u_0 = 0$ and $\tau_0$ symmetric, then there exists a sufficiently small constant $\epsilon_0 >0$ such that
		the Cauchy problem \eqref{Oldroyd_B_d} admits a unique global solution $(u,\tau)\in L^\infty([0,\infty);H^3(\mathbb{R}^2))$ satisfying the following uniform regularity estimate:
		\begin{equation*}
			\|(u,\tau)(t)\|_{H^3}^2 + \int_0^t (\|\nabla u(s)\|_{H^2}^2 + \|\tau(s)\|_{H^3}^2 ){\rm d}s \leq C\|(u_0,\tau_0)\|_{H^3}^2,
		\end{equation*}
		provided that	$\|(u_0,\tau_0)\|_{H^3} \leq \epsilon_0.$
	\end{theorem}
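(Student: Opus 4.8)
The plan is to realise the solution as a vanishing-diffusion limit of the solutions provided by Proposition~\ref{proposition_1} and to carry out the whole analysis at the level of an a priori estimate that is \emph{uniform in the diffusion parameter} $\mu>0$. For each fixed $\mu>0$, Proposition~\ref{proposition_1} gives a global solution $(u^\mu,\tau^\mu)$ of the diffusive system \eqref{Oldroyd_B_d} augmented by $-\mu\Delta\tau$; writing $(u,\tau)$ for this solution, the entire difficulty is to bound $\|(u,\tau)(t)\|_{H^3}$ together with the dissipation integral by $C\|(u_0,\tau_0)\|_{H^3}^2$ with $C$ independent of $\mu$. The estimate then transfers to the limit.

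The starting point is the energy structure. Since $\tau$ is symmetric and $\mathrm{div}\,u=0$, pairing the $u$-equation with $u$ and the $\tau$-equation with $\frac{K}{\alpha}\tau$ makes the coupling terms $K\int\mathrm{div}\,\tau\cdot u$ and $\frac{K}{\alpha}\int\alpha\mathbb{D}(u):\tau$ cancel, while the pressure and transport terms drop by incompressibility, leaving the damping identity
\[
\frac12\frac{\mathrm{d}}{\mathrm{d}t}\Big(\|u\|_{L^2}^2+\tfrac{K}{\alpha}\|\tau\|_{L^2}^2\Big)+\tfrac{K\beta}{\alpha}\|\tau\|_{L^2}^2+\tfrac{K\mu}{\alpha}\|\nabla\tau\|_{L^2}^2=0.
\]
Applying $\nabla^k$ for $k=1,2,3$ and pairing in the same way yields the analogous identity at each order, the transport and quadratic terms now appearing on the right and being controlled by product and commutator estimates together with $H^3(\mathbb{R}^2)\hookrightarrow W^{1,\infty}(\mathbb{R}^2)$. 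This produces the full damping of $\|\tau\|_{H^3}$ (and the $\mu$-dissipation $\mu\|\nabla\tau\|_{H^3}^2$), but gives \emph{no} control of $\|\nabla u\|_{H^2}$, since the system carries no viscosity.

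The core of the proof is to recover this hidden dissipation of $u$. The mechanism is that $\mathrm{div}\,\mathbb{D}(u)=\frac12\Delta u$ for divergence-free $u$, so differentiating the $u$-equation and inserting the $\tau$-equation turns the $\tau$-damping into a damped-wave structure for $u$. I will realise this at the energy level by adding, with a small weight $\eta>0$, the cross functionals $\sum_{k=0}^{2}\int\nabla^k\mathbb{D}(u):\nabla^k\tau$. Differentiating in time and substituting both equations, the contribution $\int\nabla^k\mathbb{D}(u):\nabla^k\tau_t$ produces $\alpha\|\nabla^k\mathbb{D}(u)\|_{L^2}^2=\frac{\alpha}{2}\|\nabla^{k+1}u\|_{L^2}^2$, which for $k=0,1,2$ is exactly the missing dissipation $\|\nabla u\|_{H^2}^2$. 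The remaining contributions are harmless: the term $\int\nabla^k\mathbb{D}(u_t):\nabla^k\tau$ generates the good-sign term $-K\|\nabla^k\mathrm{div}\,\tau\|^2$ plus a pressure term which, through the elliptic relation $\Delta p=K\,\mathrm{div}\,\mathrm{div}\,\tau-\mathrm{div}((u\cdot\nabla)u)$ and boundedness of the Riesz transforms, is bounded by $\|\tau\|_{H^3}^2$; the $-\beta$ term and the $O(\mu)$ diffusion contributions (absorbed respectively by the $\tau$-damping and by the $\mu$-dissipation, with the $\mu$ cancelling) and all transport terms are controlled after taking $\eta$ small and using Young's inequality. Setting
\[
\mathcal{E}(t)=\|(u,\tau)(t)\|_{H^3}^2+\eta\sum_{k=0}^{2}\int\nabla^k\mathbb{D}(u):\nabla^k\tau\,\mathrm{d}x,
\]
which is equivalent to $\|(u,\tau)\|_{H^3}^2$ once $\eta$ is small, I expect to reach
\[
\frac{\mathrm{d}}{\mathrm{d}t}\mathcal{E}(t)+c\big(\|\nabla u\|_{H^2}^2+\|\tau\|_{H^3}^2\big)\le C\sqrt{\mathcal{E}(t)}\,\big(\|\nabla u\|_{H^2}^2+\|\tau\|_{H^3}^2\big),
\]
with $c,C$ independent of $\mu$.

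With this inequality the bound closes by a continuity argument: under $\|(u_0,\tau_0)\|_{H^3}\le\epsilon_0$ with $\epsilon_0$ small, $C\sqrt{\mathcal{E}}\le c/2$ as long as $\mathcal{E}$ stays below a fixed small threshold, so the nonlinearity is absorbed, $\mathcal{E}$ is nonincreasing, and integrating in time gives the stated uniform estimate with $C$ independent of $\mu$. Passing to the vanishing-diffusion limit, the $\mu$-uniform bounds give weak-$*$ limits in $L^\infty_tH^3$, and the dissipation of $\nabla u$ together with the equations yields, via Aubin--Lions, strong local compactness sufficient to pass to the limit in $(u\cdot\nabla)u$, $(u\cdot\nabla)\tau$ and $\mathbb{D}(u)$; the limit solves \eqref{Oldroyd_B_d} and inherits the estimate by weak lower semicontinuity. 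Uniqueness follows from an $L^2$-energy estimate on the difference of two solutions, the extra regularity controlling the difference via Gronwall. I expect the main obstacle to be the top-order ($k=2$) cross term: there the dissipation of $\nabla^3 u$ must be extracted with no smoothing available, so the commutator, pressure and $\mu$-diffusion contributions have to be arranged so that each either carries the good sign or sits below $\|\nabla u\|_{H^2}^2+\|\tau\|_{H^3}^2$ with an arbitrarily small constant, uniformly in $\mu$.
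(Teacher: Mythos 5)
Your overall strategy is exactly the paper's: uniform-in-$\mu$ energy estimates for the diffusive approximation supplied by Proposition~\ref{proposition_1}, a weighted basic energy giving the damping of $\tau$, a small cross functional to recover the hidden dissipation of $\nabla u$, a continuity argument, then compactness (Aubin--Lions) for the vanishing-diffusion limit and an $L^2$--Gronwall argument for uniqueness. Indeed, your cross functional is, up to sign, \emph{identical} to the paper's: since $u$ is divergence free and $\tau$ symmetric, the paper's $\langle\Lambda^{k+1}u,\Lambda^{k}\sigma\rangle$ with $\sigma=\Lambda^{-1}\mathbb{P}\,\mathrm{div}\,\tau$ reduces, after moving the Fourier multipliers across the pairing and integrating by parts, to $\langle\nabla^k u,\nabla^k\mathrm{div}\,\tau\rangle=-\int\nabla^k\mathbb{D}(u):\nabla^k\tau\,\mathrm{d}x$.

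That sign, however, is precisely where your argument breaks. Compute the time derivative of your functional using both equations; already the linear part shows the problem:
\[
\frac{\mathrm{d}}{\mathrm{d}t}\int\mathbb{D}(u):\tau\,\mathrm{d}x
=\alpha\|\mathbb{D}(u)\|_{L^2}^2-K\|\mathrm{div}\,\tau\|_{L^2}^2-\beta\int\mathbb{D}(u):\tau\,\mathrm{d}x+\cdots
\]
With $\mathcal{E}=\|(u,\tau)\|_{H^3}^2+\eta\sum_{k}\int\nabla^k\mathbb{D}(u):\nabla^k\tau\,\mathrm{d}x$, the term $+\eta\alpha\|\nabla^k\mathbb{D}(u)\|_{L^2}^2$ therefore enters $\frac{\mathrm{d}}{\mathrm{d}t}\mathcal{E}$ with a \emph{plus} sign: it is a production term, not a dissipation term, and nothing available on the left can absorb it --- the only negative contributions are the $\beta$-damping of $\tau$ and $-\eta K\|\nabla^k\mathrm{div}\,\tau\|_{L^2}^2$, neither of which controls $\|\nabla u\|_{H^2}^2$. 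Hence the differential inequality you claim, $\frac{\mathrm{d}}{\mathrm{d}t}\mathcal{E}+c\big(\|\nabla u\|_{H^2}^2+\|\tau\|_{H^3}^2\big)\le C\sqrt{\mathcal{E}}\big(\|\nabla u\|_{H^2}^2+\|\tau\|_{H^3}^2\big)$, cannot be derived from your $\mathcal{E}$; with your sign the cross term pumps energy in rather than extracting dissipation. You have the two roles reversed: what you call ``exactly the missing dissipation'' is the bad term, while the term you call ``good-sign,'' $-K\|\nabla^k\mathrm{div}\,\tau\|_{L^2}^2$, is extra dissipation of a quantity you already fully control through the damping. The repair is a one-character change: take the cross functional $-\eta\sum_k\int\nabla^k\mathbb{D}(u):\nabla^k\tau\,\mathrm{d}x$, equivalently $+\eta\sum_k\int\nabla^k u\cdot\nabla^k\mathrm{div}\,\tau\,\mathrm{d}x$. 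Its derivative then produces $-\eta\frac{\alpha}{2}\|\nabla^{k+1}u\|_{L^2}^2$ (dissipation once moved to the left) at the cost of $+\eta K\|\nabla^k\mathrm{div}\,\tau\|_{L^2}^2$ and a $\beta$-cross term, both absorbed by the $\tau$-damping for $\eta$ small; after this correction your outline --- pressure handled via $\Delta p=K\,\mathrm{div}\,\mathrm{div}\,\tau-\mathrm{div}\big((u\cdot\nabla)u\big)$ and Riesz-transform bounds, equivalence of $\mathcal{E}$ with $\|(u,\tau)\|_{H^3}^2$, continuity argument, limit passage, uniqueness --- goes through and coincides with the paper's proof, whose $\sigma$-formulation merely packages the same functional while eliminating the pressure from the outset.
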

	Based on the global existence and uniqueness of the solution, we get the second main result concerning the time-decay estimates.
	\begin{theorem}\label{thm_OB_d_decay}
		Under the assumptions of Theorem \ref{wellposedness}, assume in addition that $(u_0,\tau_0)\in L^1(\mathbb{R}^2)$, then the following  optimal time-decay estimates of the solution to the problem \eqref{Oldroyd_B_d} hold.
		\begin{enumerate}[i)]
			\item  Upper time-decay estimates of the solutions:
			\begin{eqnarray}\label{opti1}
				\ \|\nabla^ku(t)\|_{L^2}\le C (1+t)^{-\frac12-\frac{k}{2}},\ k=0,1,2,3,
			\end{eqnarray}
			and
			\begin{eqnarray}\label{opti2}
				\ \|\nabla^{k}\tau(t)\|_{L^2}\le C(1+t)^{-1-\frac{k}{2}},\ k=0,1,2,3,
			\end{eqnarray}
			for all $t>0$, where $C$ is a positive constant independent of time.
			
			\medskip
			
			\item In addition, assume that $\Big|\int_{\mathbb{R}^2}u_0(x){\rm d}x\Big| = c_2>0.$ Then there exists a positive time $t_1=t_1(\beta)$ such that
			\begin{eqnarray}\label{opti3}
				\|\nabla^ku(t)\|_{L^2}\ge \frac{1}{C} (1+t)^{-\frac12-\frac{k}{2}},\ k=0,1,2,3,
			\end{eqnarray}
			and
			\begin{eqnarray}\label{opti4}
				\|\nabla^{k}\tau(t)\|_{L^2}\ge \frac{1}{C} (1+t)^{-1-\frac{k}{2}},\ k=0,1,2,3,
			\end{eqnarray}
			for all $t\geq t_1$.
		\end{enumerate}
	\end{theorem}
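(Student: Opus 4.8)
The plan is to manufacture all the decay from the spectral structure of the linearization, since \eqref{Oldroyd_B_d} has no explicit dissipative or smoothing term. First I would apply the Leray projection $\mathbb{P}$ to remove the pressure and write the system as $\partial_t U+\mathcal{L}U=\mathcal{N}(U)$ with $U=(u,\tau)$, where $\mathcal{L}$ encodes the coupling $K\,\mathbb P\,\mathrm{div}\,\tau$, $\alpha\mathbb D(u)$ and the damping $\beta\tau$, and $\mathcal N(U)$ gathers the transport terms, written in divergence form $(u\cdot\nabla)u=\mathrm{div}(u\otimes u)$, $(u\cdot\nabla)\tau=\mathrm{div}(u\otimes\tau)$ using $\mathrm{div}\,u=0$. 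Passing to Fourier variables gives a symbol matrix $A(\xi)$, and the core of the linear analysis is to show that although $\mathcal L$ is not termwise dissipative, the coupling creates one effective parabolic mode: at low frequency the stress is slaved to the velocity, $\hat\tau\approx\tfrac{\alpha}{\beta}\mathbb D(\hat u)$, so that $\partial_t\hat u\approx\tfrac{K\alpha}{2\beta}\Delta\hat u$ (because $\mathrm{div}\,\mathbb D(u)=\tfrac12\Delta u$ for divergence-free $u$), yielding eigenvalues $\lambda(\xi)\sim-c|\xi|^2$ for $|\xi|\le r_0$ and an $O(1)$ spectral gap governed by $\beta$ for $|\xi|\ge R_0$. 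With these eigenvalue and eigenprojection bounds I would estimate $e^{t\mathcal L}$ componentwise; using the extra hypothesis $U_0\in L^1$ so that $|\hat U_0|\lesssim\|U_0\|_{L^1}$, integrating $|\xi|^{2k}e^{-2c|\xi|^2t}$ over $\mathbb R^2$ gives the linear rates $\|\nabla^k u_{\mathrm{lin}}\|_{L^2}\lesssim(1+t)^{-\frac12-\frac k2}$, while the extra factor $|\xi|$ from the slaving relation produces the faster $\|\nabla^k\tau_{\mathrm{lin}}\|_{L^2}\lesssim(1+t)^{-1-\frac k2}$.

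For the nonlinear upper bounds \eqref{opti1}--\eqref{opti2} I would feed the uniform $H^3$ estimate of Theorem \ref{wellposedness} into Duhamel's formula $U(t)=e^{t\mathcal L}U_0+\int_0^t e^{(t-s)\mathcal L}\mathcal N(U(s))\,\mathrm{d}s$ and close the rates with the Fourier splitting method. The quadratic, divergence-form nonlinearities obey $\|u\otimes u\|_{L^1}\lesssim\|u\|_{L^2}^2$ and $\|u\otimes\tau\|_{L^1}\lesssim\|u\|_{L^2}\|\tau\|_{L^2}$, so each factor $e^{(t-s)\mathcal L}\mathrm{div}$ gains a derivative in the decay kernel and the nonlinear contributions decay strictly faster than the linear parts (at worst up to a harmless logarithm, which remains subdominant in 2D). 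Concretely I would introduce the time-weighted norm
\[
\mathcal M(t)=\sup_{0\le s\le t}\sum_{k=0}^{3}\Big((1+s)^{\frac12+\frac k2}\|\nabla^k u(s)\|_{L^2}+(1+s)^{1+\frac k2}\|\nabla^k\tau(s)\|_{L^2}\Big)
\]
and close a bound $\mathcal M(t)\le C\|U_0\|_{L^1\cap H^3}+C\mathcal M(t)^2$, which gives $\mathcal M(t)\le C$ for small data. Equivalently, to obtain the base rate $k=0$ one can run Schonbek's splitting directly on the energy identity: the spectral analysis supplies the effective dissipation $c\!\int|\xi|^2|\hat u|^2\,\mathrm{d}\xi+\beta\!\int|\hat\tau|^2\,\mathrm{d}\xi$, and cutting at $|\xi|\le g(t)$ with $g(t)^2\sim(1+t)^{-1}$, while estimating the low-frequency mass through the spectral/Duhamel bound, reproduces the stated rates.

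For the optimality \eqref{opti3}--\eqref{opti4} I would isolate the leading low-frequency behavior. Near $\xi=0$ the spectral decomposition gives $\hat u_{\mathrm{lin}}(\xi,t)\approx\hat u_0(\xi)\,e^{-c|\xi|^2t}$, and the hypothesis $|\hat u_0(0)|=\big|\int_{\mathbb R^2}u_0\,\mathrm{d}x\big|=c_2>0$ together with continuity of $\hat u_0$ forces $|\hat u_{\mathrm{lin}}(\xi,t)|\ge c_2/2$ on the ball $|\xi|\le\min(\delta_0,t^{-1/2})$. Integrating $|\xi|^{2k}|\hat u_{\mathrm{lin}}|^2$ over this ball yields $\|\nabla^k u_{\mathrm{lin}}\|_{L^2}\gtrsim c_2(1+t)^{-\frac12-\frac k2}$, and the slaving relation $\hat\tau_{\mathrm{lin}}\approx\tfrac\alpha\beta\mathbb D(\hat u_{\mathrm{lin}})$ contributes the extra $|\xi|$ giving $\|\nabla^k\tau_{\mathrm{lin}}\|_{L^2}\gtrsim c_2(1+t)^{-1-\frac k2}$. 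Since part (i) shows the nonlinear and high-frequency remainders decay strictly faster, there is a crossover time $t_1=t_1(\beta)$ beyond which the linear part dominates, which is precisely \eqref{opti3}--\eqref{opti4}.

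The main obstacle is exactly the absence of dissipation and regularity in \eqref{Oldroyd_B_d}: the decay has to be produced entirely by the $u$--$\tau$ coupling, so the delicate point is to make the low-frequency ``effective heat operator'' rigorous---controlling the eigenvalues, eigenprojections and resolvent of $A(\xi)$ uniformly down to $\xi=0$---rather than heuristically. This is compounded by the two-dimensional setting, where the decay is only borderline integrable and the Duhamel convolution integrals sit exactly at the threshold where logarithms appear; the argument must use the strictly faster decay of $\tau$ and the divergence structure of $\mathcal N$ to keep every nonlinear term subdominant and to avoid a logarithmic loss in the sharp rates. Proving the upper and lower bounds in tandem, and in particular showing the nonlinearity cannot cancel the linear leading term for $t\ge t_1$, is the most technical part.
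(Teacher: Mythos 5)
Your high-level skeleton --- spectral analysis of the linearization with an effective heat mode at low frequencies, $L^1$ data, Duhamel, Fourier splitting, and ``linear part dominates nonlinear remainder'' for part (ii) --- is the same as the paper's, and your plan for the lower bounds \eqref{opti3}--\eqref{opti4} is essentially Lemmas~\ref{lemma_Greenfunction_10}, \ref{lemma_Greenfunction_11} and \ref{lower_bound}. The genuine gap is in your main mechanism for part (i): the time-weighted norm $\mathcal M(t)$, containing derivatives up to order three with the sharp weights, cannot be closed by pure Duhamel iteration, because at high frequencies neither the semigroup nor the system has any smoothing. For $|\xi|\geq R$ the eigenvalues are $-\frac{\beta}{2}\pm\frac{i}{2}\sqrt{2\alpha K|\xi|^2-\beta^2}$, so the kernel decays only like $e^{-\beta t/2}$, with no factor capable of absorbing powers of $|\xi|$. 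Your claim that ``each factor $e^{(t-s)\mathcal L}\mathrm{div}$ gains a derivative in the decay kernel'' is the heat-semigroup heuristic; it is true precisely on $|\xi|\le R$ (where $|\xi|e^{-\theta|\xi|^2(t-s)}\lesssim(t-s)^{-1/2}$, Lemma~\ref{lemma_Greenfunction_4}) and false on the complement. Consequently the Duhamel contribution of the transport terms to $\|\nabla^3 u\|_{L^2}$ costs $\|\nabla^4(u\otimes u)\|_{L^2}$, and for the stress the situation is worse: $\sigma=\Lambda^{-1}\mathbb P\,\mathrm{div}\,\tau$ does not recover $\tau$, and the remaining components obey $\partial_t\hat\tau+\beta\hat\tau=\alpha\widehat{\mathbb D(u)}+\dots$, whose forcing carries an uncompensated factor $|\xi|$; so estimating $\|\nabla^3\tau\|_{L^2}$ through the full semigroup needs roughly five derivatives of the quadratic terms. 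None of this is available in $H^3$, so the inequality $\mathcal M(t)\le C+C\mathcal M(t)^2$ cannot be derived this way; the derivative loss is exactly the ``lack of dissipation and regularity'' you name as the main obstacle, and your proposed closure is the one it defeats.

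What you relegate to an optional aside (``equivalently, one can run Schonbek's splitting on the energy identity'') is in fact the indispensable core, and it is not equivalent. The paper uses Duhamel \emph{only} on $|\xi|\le R$ (Lemma~\ref{lemma_Greenfunction_7}); the high frequencies and all top-order terms are handled by energy estimates in which the dangerous derivatives cancel exactly, via incompressibility ($\langle u\cdot\nabla\nabla^k\tau,\nabla^k\tau\rangle=0$) and the antisymmetric coupling ($\alpha K\langle\nabla^k\mathrm{div}\,\tau,\nabla^k u\rangle+\alpha K\langle\nabla^k\mathbb D(u),\nabla^k\tau\rangle=0$). Moreover the ``effective dissipation'' of $u$ is not supplied by the energy identity --- the basic identity \eqref{est_L2} dissipates only $\tau$ --- but must be manufactured hypocoercively through the cross terms $\eta_i\langle\Lambda^iu,\Lambda^{i-1}\sigma\rangle$ as in \eqref{est_H3}. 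The decay then comes out sequentially, not all at once: a preliminary Dong--Chen step (Lemma~\ref{lemma_upper_decay}) gives the non-sharp rate $(1+t)^{-1/2}$, which is what makes $|\hat u(\xi,t)|\le C$ on the shrinking ball \eqref{u_L00_low_2} available; Fourier splitting with the time-dependent cutoff $g_1^2(t)\sim(1+t)^{-1}$ (Lemma~\ref{lemma_upper_decay+1}) then yields the sharp $H^1$ rate while avoiding the logarithmically divergent convolution $\int_0^t(1+t-s)^{-1/2}(1+s)^{-1}\,\mathrm ds$; and the higher rates require the order-by-order ladder of Lemmas~\ref{lemma_upper_decay_2}--\ref{lemma_upper_decay_3}, Corollaries~\ref{cor1}--\ref{cor2}, and the dedicated high-low decompositions of Lemmas~\ref{lemma_upper_decay_4}--\ref{lemma_upper_decay_5} to reach $\|\nabla^3\tau\|_{L^2}\lesssim(1+t)^{-5/2}$. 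Your part (ii) argument is fine once part (i) is established by these means, but as written your proposal is missing the key idea that makes part (i) provable in $H^3$.
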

	\begin{rem}
		For any $\mu>0$, Theorem \ref{thm_OB_d_decay} still holds for the system with diffusion.
	\end{rem}

	\subsection{Main ideas}
	In order to establish the global wellposedness result, we choose (\ref{Oldroyd_B_d}) with the diffusive term $-\mu\Delta\tau$ as an approximate system. To obtain the uniform regularity for $\mu,$ the diffusive term can not play much role. Instead we make full use of the damping term $\tau$. Combining with some compactness arguments, the unique global solution of the Cauchy problem for \eqref{Oldroyd_B_d} can be obtained via vanishing diffusion limit.
	
	To obtain some optimal time-decay estimates of the solution, the main challenges focus on deriving the sharp decay rate of the solution itself in $L^2$ norm due to the lower dimension. In fact, one can know from Lemma \ref{lemma_Greenfunction_7} that the time-decay rates of the low-frequency part of the solution to the linearized system \eqref{Oldroyd_B_d} will decrease as the dimension does. Our main strategy is to use spectral analysis together with energy method that the upper bound of the low frequency is a constant to get the sharp time-decay rates of the higher order derivatives of the solution. However, it seems not working for the lower order. More specifically, inspired by \cite{Dong 2006} where $$\|u(t)\|_{L^2}\le C (1+t)^{-\frac14},$$ can be derived by using an observation that $$(1+t)^{\frac{1}{2}}\| \nabla u(t)\|_{L^2}\longrightarrow 0 \,\,\,\,as\,\,\,\,t\rightarrow\infty,$$ and Lemma \ref{lemma_Greenfunction_8}, see Lemma \ref{lemma_upper_decay} for more details. To get the sharp time-decay rate $$\|u(t)\|_{L^2}\le C (1+t)^{-\frac12},$$ if we replace the upper bound of the low frequency $g_1^2(t)$ in \eqref{new_H1_L2_29} by a constant and use Lemma \ref{lemma_Greenfunction_7}, then an integral like $\int_0^t(1+t-s)^{-\frac12}(1+s)^{-1}{\rm d}s$ will turn up and it could not be dominated by $(1+t)^{-\frac12}$. The Fourier splitting method that the upper bound of the low frequency depends on a function of time can overcome this difficulty.
	
	%In order to guarantee the  time-decay rates is optimal in some sense, we need to make sure that the lower time-decay estimates is consistent with the upper time-decay estimates. Compared to \cite{Huang 2022}, the difficulty we have here is that the decay rates of the nonlinear term $(u\cdot\nabla) u$ and $(u\cdot\nabla) \tau$ are not high enough when the order is  high. Our strategy is to artificially increase derivatives  to the nonlinear term after the Fourier transform to increase the decay rates, see Lemma \ref{lower_bound} for details.	
	
	\medskip	
	
	The rest of the paper is organized as follows. In Section \ref{Section_2}  we prove the uniform regularity estimates for $\mu$ in \eqref{Oldroyd_B_1} and obtain Theorem \ref{approximate solution}. In Section \ref{Section_4} we use the vanishing diffusion limit technique to obtain the unique global solution of system \eqref{Oldroyd_B_d} and finish the proof of Theorem \ref{wellposedness}. In Section \ref{Section_5} we first analyze the linear part of the system \eqref{Oldroyd_B_d} and obtain the corresponding estimates of Green functions and the low-frequency part of the solution in \eqref{Oldroyd_B_d}, and then we obtain the optimal time-decay rates respectively for $u$ and $\tau$ and get Theorem \ref{thm_OB_d_decay}.

	Throughout the rest of the paper, let $C$ denote a generic positive constant depending on some known constants but independent of $\mu$, $\delta$, $t$, and $\eta_i$ for $i=1,2,3$.

	\section{Uniform regularity}\label{Section_2}
	
	To begin with, we use the following initial-value problem as an approximation of the problem \eqref{Oldroyd_B_d} as $\mu \to 0$, namely,
	\begin{eqnarray} \label{Oldroyd_B_1}
		\begin{cases}
			\partial_tu^\mu+(u^\mu\cdot\nabla) u^\mu+\nabla p^\mu=K \,{\rm div}\tau^\mu,\\
			\partial_t\tau^\mu+(u^\mu\cdot\nabla)\tau^\mu-\mu\Delta\tau^\mu+\beta\tau^\mu=\alpha\mathbb{D}(u^\mu),\\
			{\rm div}\,u^\mu=0, (u^\mu,\tau^\mu)(x,0) = (u_0,\tau_0).
		\end{cases}
	\end{eqnarray}
	The global wellposedness of problem \eqref{Oldroyd_B_1} for fixed $\mu>0$ was already stated in Proposition \ref{proposition_1}. In this section, we will establish the uniform regularity of the solutions to the problem \eqref{Oldroyd_B_1}, i.e.,
	\begin{theorem}\label{approximate solution}
		Suppose that $(u_0,\tau_0)\in H^3(\mathbb{R}^2)$ with $\mathrm{div} \,u_0 = 0$ and $\tau_0$ symmetric, then there exists a sufficiently small constant $\epsilon_0 >0$ independent
		of $\mu$ and $t$, such that the solutions to the Cauchy problem \eqref{Oldroyd_B_1}  satisfy the following uniform estimates:
		\begin{equation*}\label{uniform_estimates}
			\|(u^\mu,\tau^\mu)(t)\|_{H^3}^2 + \int_0^t (\|\nabla u^\mu(s)\|_{H^2}^2 + \|\tau^\mu(s)\|_{H^3}^2 + \mu\|\nabla\tau^\mu(s)\|_{H^3}^2){\rm d}s \leq C\|(u_0,\tau_0)\|_{H^3}^2,
		\end{equation*}
		for all $t>0$, provided that $\|(u_0,\tau_0)\|_{H^3} \leq \epsilon_0.$
	\end{theorem}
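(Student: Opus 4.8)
The proof will be via uniform a priori estimates closed by a continuity argument: since Proposition~\ref{proposition_1} already supplies, for each fixed $\mu>0$, a global solution $(u^\mu,\tau^\mu)\in C([0,\infty);H^3)$, it suffices to derive the asserted bound with a constant $C$ independent of $\mu$ and $t$. Throughout I suppress the superscript $\mu$. The structural difficulty is that the system is inviscid in $u$, so the only dissipation visible in a naive energy estimate is the damping $\beta\tau$; the whole point is to extract an \emph{effective} dissipation for $\nabla u$ out of the coupling, uniformly as $\mu\to 0$ (this is the ``make full use of the damping term'' strategy announced in the introduction).

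First I would run the weighted energy estimate at each order. Applying $\nabla^k$ for $0\le k\le 3$, testing the momentum equation with $\nabla^k u$ and the constitutive equation with $\frac{K}{\alpha}\nabla^k\tau$, and adding, the transport terms are handled by $\udiv u=0$ together with commutator (product) estimates, while the top-order coupling contributions $-K\int\nabla^k\udiv\tau\cdot\nabla^k u$ and $K\int\nabla^k\mathbb{D}(u):\nabla^k\tau$ cancel exactly after one integration by parts (using the symmetry of $\tau$). This cancellation is precisely what avoids the apparent loss of one derivative in the coupling. What survives is
\[
\frac12\udt\Big(\|\nabla^k u\|_{L^2}^2+\tfrac{K}{\alpha}\|\nabla^k\tau\|_{L^2}^2\Big)+\frac{K\beta}{\alpha}\|\nabla^k\tau\|_{L^2}^2+\frac{K\mu}{\alpha}\|\nabla^{k+1}\tau\|_{L^2}^2=\mathcal N_k,
\]
where $\mathcal N_k$ collects the cubic nonlinear and commutator terms. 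Summing over $k\le 3$ yields the energy $\|(u,\tau)\|_{H^3}^2$, the dissipation $\int_0^t\|\tau\|_{H^3}^2$ from the damping, and the bonus $\mu$-dissipation $\mu\int_0^t\|\nabla\tau\|_{H^3}^2$ from the diffusion — but no control of $\int_0^t\|\nabla u\|_{H^2}^2$.

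To recover the missing velocity dissipation I would introduce, for $0\le k\le 2$, the interaction functionals $I_k(t)=\int\nabla^k u\cdot\nabla^k\udiv\tau\,\ud x$. Differentiating in time, eliminating the pressure by the Leray projection (legitimate since $u$ is divergence-free), and substituting both equations, the constitutive source $\alpha\mathbb{D}(u)$ generates the favorable term, and schematically (after integration by parts)
\[
\udt I_k=-\frac{\alpha}{2}\|\nabla^{k+1}u\|_{L^2}^2+K\|\nabla^k\udiv\tau\|_{L^2}^2+\beta\!\int\!\nabla^{k+1}u:\nabla^k\tau-\mu\!\int\!\nabla^{k+1}u:\nabla^k\Delta\tau+\widetilde{\mathcal N}_k .
\]
The gain is $-\frac{\alpha}{2}\|\nabla^{k+1}u\|_{L^2}^2$; the positive term $K\|\nabla^k\udiv\tau\|_{L^2}^2\le K\|\nabla^{k+1}\tau\|_{L^2}^2$ (and the pressure remainder, which is of the same order) is absorbed by the $\tau$-dissipation $\int_0^t\|\tau\|_{H^3}^2$ from the previous step, the $\beta$-term is split by Young's inequality into a small multiple of $\|\nabla^{k+1}u\|_{L^2}^2$ plus $C\|\nabla^k\tau\|_{L^2}^2$, and $\widetilde{\mathcal N}_k$ is cubic. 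Since $|I_k|\lesssim\|\nabla^k u\|_{L^2}\|\nabla^{k+1}\tau\|_{L^2}$ is dominated by the energy, time-integration converts $-\frac{\alpha}{2}\|\nabla^{k+1}u\|_{L^2}^2$ into control of $\int_0^t\|\nabla u\|_{H^2}^2$.

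Finally I would combine everything into a single Lyapunov functional $\mathcal E(t)=\sum_{k\le3}\frac12\big(\|\nabla^k u\|_{L^2}^2+\frac{K}{\alpha}\|\nabla^k\tau\|_{L^2}^2\big)+\sum_{k\le2}\eta_k I_k(t)$ with $\eta_k$ chosen small enough that $\mathcal E(t)\simeq\|(u,\tau)\|_{H^3}^2$ and that the favorable $-\frac{\alpha}{2}\eta_k\|\nabla^{k+1}u\|_{L^2}^2$ dominate all positive contributions transferred from the $I_k$. Writing $\mathcal D=\|\nabla u\|_{H^2}^2+\|\tau\|_{H^3}^2+\mu\|\nabla\tau\|_{H^3}^2$, this produces a differential inequality $\udt\mathcal E+c\,\mathcal D\le C\,\mathcal E^{1/2}\,\mathcal D$, the right-hand side gathering the cubic terms estimated by the two-dimensional embedding $H^2\hookrightarrow L^\infty$ and interpolation; under $\sup_t\mathcal E^{1/2}\le\epsilon_0$ small the right side is absorbed, and a continuity argument closes the uniform bound. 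I expect the main obstacle to be exactly this last closure at the \emph{top order} $k=3$: because there is no dissipation on $u$ itself (only on its derivatives, through the $I_k$), every commutator and nonlinear term must be charged either to the constructed velocity dissipation or to the $\tau$-dissipation, with all constants uniform in $\mu\in(0,1]$. In particular, the $\mu$-diffusion contribution $-\mu\int\nabla^{3}u:\nabla^{2}\Delta\tau$ in $\udt I_2$ requires a careful derivative split, pairing $\nabla^3 u$ (velocity dissipation) against $\sqrt{\mu}\,\nabla^{4}\tau$ (the $\mu$-enhanced $\tau$-dissipation) so that nothing is charged to an unavailable $\mu$-independent bound on high derivatives of $u$.
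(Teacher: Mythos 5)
Your proposal is correct and follows essentially the same route as the paper: a weighted energy estimate exploiting the exact cancellation of the coupling terms, augmented by cross functionals with small coefficients to manufacture dissipation for $\nabla u$, with the $\mu$-terms absorbed into the $\tau$-dissipation and the whole scheme closed by smallness and a continuity argument. Your interaction functionals $I_k=\int\nabla^k u\cdot\nabla^k\mathrm{div}\,\tau\,\mathrm{d}x$ coincide (up to Plancherel and $\mathbb{P}u=u$) with the paper's $\eta_{k+1}\langle\Lambda^{k+1}u,\Lambda^{k}\sigma\rangle$ built from $\sigma=\Lambda^{-1}\mathbb{P}\mathrm{div}\,\tau$, so the two proofs differ only in notation.
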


	\medskip
	For simplicity, we use $(u,\tau)$ to represent $(u^\mu,\tau^\mu)$. Before proving Theorem \ref{approximate solution}, we need some reformulations of the original system which are motivated by \cite{Zi 2014} and the references therein.	More specifically, applying the Leray projection operator $\mathbb{P}$ to the first equation of \eqref{Oldroyd_B_1} and the operator $\mathbb{P}{\rm div}\,$ to the second equation of \eqref{Oldroyd_B_1} respectively, we obtain that
	\begin{eqnarray} \label{OB_d_1}
		\begin{cases}
			\partial_tu+\mathbb{P}\left(u\cdot\nabla u\right)=K\, \mathbb{P}{\rm div}\tau,\\
			\partial_t\mathbb{P}{\rm div}\tau+\mathbb{P}{\rm div}\left(u\cdot\nabla\tau\right)-\mu\,\mathbb{P}{\rm div}\Delta\tau+\beta\,\mathbb{P}{\rm div}\tau=\frac{\alpha}{2}\Delta u.
		\end{cases}
	\end{eqnarray}
	Then, applying $\Lambda^{-1}=(\sqrt{-\Delta})^{-1}$ to (\ref{OB_d_1})$_2$ and denoting by
	\begin{eqnarray}\label{sigma}
		\sigma := \Lambda^{-1}\mathbb{P}{\rm div}\tau,
	\end{eqnarray} we can rewrite (\ref{OB_d_1}) as follows:
	\begin{eqnarray} \label{u_sigma_d}
		\begin{cases}
			\partial_tu-K \Lambda\sigma=\mathcal{F}_1,\\
			\partial_t\sigma -\mu\Delta\sigma +\beta\sigma+\frac{\alpha}{2}\Lambda u=\mathcal{F}_2,
		\end{cases}
	\end{eqnarray} where
	\begin{eqnarray*}
		\mathcal{F}_1=-\mathbb{P}\left(u\cdot\nabla u\right),\
		\mathcal{F}_2=-\Lambda^{-1}\mathbb{P}{\rm div}\left(u\cdot\nabla\tau\right).
	\end{eqnarray*}
	Here $\hat{\sigma}^j=i\left(\delta_{j,k}-\frac{\xi_j\xi_k}{|\xi|^2}\right)\frac{\xi_l}{|\xi|}\hat{\tau}^{l,k}$ where $\hat{f}$ denotes the Fourier transform of $f$.
	
	It is worth noticing that	for any $u\in L^2(\mathbb{R}^2)$, there holds
	\begin{equation*}
		\mathbb{P}(u)_i=u_i-\sum_{k=1}^{2}R_iR_k u_k,
	\end{equation*}
	where $R_iR_k=(-\Delta)^{-1}\partial_i\partial_k$.
	
	It is not difficult to get that
	\begin{equation}\label{bu_7}
		\|\mathbb{P}u\|_{L^2}^2\le C \|u\|_{L^2}^2.
	\end{equation}
	Combining \eqref{sigma} and \eqref{bu_7}, we can estimate $\sigma$ as follows:
	\begin{equation}\label{bu_8}
		\|\nabla^k\sigma\|_{L^2}^2\le C\|\nabla^k\tau\|_{L^2}^2,
	\end{equation}for $k = 0,1,2,3$.
	
	%\section{Proof of Theorem \ref{approximate solution}}\label{Section_3}

	The proof of Theorem \ref{approximate solution} highly relies on the following proposition.
	\begin{pro}\label{Prop2}
		Under the conditions of Theorem \ref{approximate solution}, there exist sufficiently small positive constants $\epsilon_0$ and $\delta$ independent of $\mu$ and $T$ such that
		if
		\begin{equation*}\label{apriori-assum}
			\sup_{0\leq s \leq T}\|(u,\tau)(s)\|_{H^3}\leq \delta,
		\end{equation*}
		for any given $T>0$, there holds
		\begin{equation*}\label{apriori-result}
			\sup_{0\leq s \leq T}\|(u,\tau)(s)\|_{H^3}\leq \frac{\delta}{2},
		\end{equation*}
		provided that $\|(u_0,\tau_0)\|_{H^3} \leq \epsilon_0.$
		
	\end{pro}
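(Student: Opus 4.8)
The plan is to exhibit a Lyapunov functional $\mathcal{E}(t)$, equivalent to $\|(u,\tau)(t)\|_{H^3}^2$, satisfying a closed differential inequality $\frac{d}{dt}\mathcal{E}+c\,D\le C\delta D$, where $D:=\|\nabla u\|_{H^2}^2+\|\tau\|_{H^3}^2+\mu\|\nabla\tau\|_{H^3}^2$ collects every bit of available dissipation; once this is in hand, shrinking $\delta$ and then $\epsilon_0$ closes the bootstrap. The decisive difficulty is that the velocity equation carries no viscosity, so the naive energy identity yields no dissipation for $u$; all control of $\|\nabla u\|_{H^2}$ must be manufactured from the coupling, with constants independent of $\mu$.

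First I would run the basic energy estimate. For each $0\le k\le3$ I apply $\nabla^k$ to the two equations of \eqref{Oldroyd_B_1}, test with $\nabla^k u$ and $\nabla^k\tau$, and form the weighted combination $\frac{\alpha}{K}(u\text{-identity})+(\tau\text{-identity})$. The pressure drops out since $\mathrm{div}\,u=0$, the top-order transport terms vanish after integration by parts (again by $\mathrm{div}\,u=0$), and the two linear coupling terms cancel exactly, because $K\int\nabla^k\mathrm{div}\,\tau\cdot\nabla^k u=-K\int\nabla^k\tau:\nabla^k\mathbb{D}(u)$ (using that $\tau$ is symmetric) is annihilated by $\alpha\int\nabla^k\mathbb{D}(u):\nabla^k\tau$ under the weight $\alpha/K$. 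This gives $\frac{d}{dt}E_0+\beta\|\tau\|_{H^3}^2+\mu\|\nabla\tau\|_{H^3}^2=\mathcal{N}_0$ with $E_0:=\frac12\sum_{k\le3}(\frac{\alpha}{K}\|\nabla^k u\|_{L^2}^2+\|\nabla^k\tau\|_{L^2}^2)$ and $\mathcal{N}_0$ a sum of cubic commutators — providing dissipation for $\tau$ only.

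Next, to recover dissipation for $u$ I use the reformulation \eqref{u_sigma_d}, whose linear part $\partial_t u=K\Lambda\sigma$, $\partial_t\sigma=\mu\Delta\sigma-\beta\sigma-\frac{\alpha}{2}\Lambda u$ is of damped-wave type. Differentiating the interaction functional $\sum_{k\le2}\int\nabla^k u\cdot\Lambda\nabla^k\sigma\,\mathrm{d}x$ and inserting both equations produces the good term $-\frac{\alpha}{2}\|\nabla u\|_{H^2}^2$, together with $K\|\nabla\sigma\|_{H^2}^2$, the indefinite term $-\beta\sum_k\int\nabla^k u\cdot\Lambda\nabla^k\sigma$, the $\mu$-contributions, and cubic remainders from $\mathcal{F}_1,\mathcal{F}_2$. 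The term $K\|\nabla\sigma\|_{H^2}^2$ is harmless because $\|\nabla^{k+1}\sigma\|_{L^2}\le C\|\nabla^{k+1}\tau\|_{L^2}$ by \eqref{bu_8}, so it is absorbed into $\beta\|\tau\|_{H^3}^2$ once the coupling coefficient $\eta$ is small; the indefinite term is rewritten via $\Lambda\nabla^k\sigma=\frac1K\nabla^k(\partial_t u-\mathcal{F}_1)$ as $-\frac{\beta}{2K}\frac{d}{dt}\|\nabla^k u\|_{L^2}^2$ plus a cubic term (which vanishes for $k=0$ since $u$ is divergence free), the time-derivative piece being absorbed into the functional; the $\mu$-terms are controlled by Young's inequality against $\frac{\eta\alpha}{2}\|\nabla u\|_{H^2}^2$ and the favorable $\mu$-dissipation in $D$, uniformly for $\mu\le1$. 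Setting $\mathcal{E}:=E_0-\eta\big(\sum_{k\le2}\int\nabla^k u\cdot\Lambda\nabla^k\sigma+\frac{\beta}{2K}\sum_{k\le2}\|\nabla^k u\|_{L^2}^2\big)$, with $\eta$ small enough $\mathcal{E}$ is equivalent to $\|(u,\tau)\|_{H^3}^2$ and one obtains $\frac{d}{dt}\mathcal{E}+c\,D\le\mathcal{N}$, where $\mathcal{N}$ gathers all cubic terms.

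Finally I would treat the nonlinear terms and close. Every term in $\mathcal{N}$ is at least cubic; using two-dimensional Gagliardo–Nirenberg and Sobolev embeddings ($H^2(\mathbb{R}^2)\hookrightarrow L^\infty$), Moser-type product estimates, the top-order cancellations noted above, and the boundedness of $\mathbb{P}$ on Sobolev spaces, each is bounded by $C\|(u,\tau)\|_{H^3}\,D\le C\delta D$ under the a priori assumption. Hence $\frac{d}{dt}\mathcal{E}+c\,D\le C\delta D$, and for $\delta$ small $\frac{d}{dt}\mathcal{E}+\frac{c}{2}D\le0$. Integrating in time and using $\mathcal{E}\sim\|(u,\tau)\|_{H^3}^2$ gives $\|(u,\tau)(t)\|_{H^3}^2\le C_\ast\|(u_0,\tau_0)\|_{H^3}^2\le C_\ast\epsilon_0^2$ for all $0\le t\le T$, with $C_\ast$ independent of $\mu$ and $T$; choosing $\epsilon_0$ so small that $C_\ast\epsilon_0^2\le(\delta/2)^2$ yields $\sup_{0\le s\le T}\|(u,\tau)(s)\|_{H^3}\le\delta/2$. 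I expect the main obstacle to be the interaction step — designing the functional so that it simultaneously generates the full $\|\nabla u\|_{H^2}^2$ dissipation, keeps all constants independent of $\mu$, and disposes of the indefinite low-order term $-\beta\int\nabla^k u\cdot\Lambda\nabla^k\sigma$, which at $k=0$ has no $u$-dissipation to absorb it; the rewriting that transfers $\frac{d}{dt}\|u\|_{L^2}^2$ into the functional is the delicate point.
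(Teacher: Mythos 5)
Your strategy is the same as the paper's: a weighted basic energy (which produces only the $\tau$-damping and the $\mu$-dissipation) plus a small multiple of cross terms built from the reformulation \eqref{u_sigma_d}, whose time derivative manufactures the missing $\|\nabla u\|_{H^2}^2$; then absorb, take $\delta$ small, and close the bootstrap with $\epsilon_0$. This is exactly the paper's functional $\alpha\|u\|_{H^3}^2+K\|\tau\|_{H^3}^2+\sum_{i=1}^3\eta_i\langle\Lambda^i u,\Lambda^{i-1}\sigma\rangle$ (see \eqref{est_H3}). However, as written your Lyapunov functional carries the wrong sign, and this breaks the closing step. You correctly compute that
\begin{equation*}
\frac{\mathrm{d}}{\mathrm{d}t}\sum_{k\le 2}\langle \nabla^k u,\Lambda\nabla^k\sigma\rangle=-\frac{\alpha}{2}\|\nabla u\|_{H^2}^2+K\|\nabla\sigma\|_{H^2}^2-\beta\sum_{k\le 2}\langle\nabla^k u,\Lambda\nabla^k\sigma\rangle+\mu\hbox{-terms}+\hbox{cubic},
\end{equation*}
so to turn the ``good term'' into genuine dissipation the cross terms must enter the functional with a \emph{plus} sign, $\mathcal{E}=E_0+\eta(\cdots)$, as in the paper (positive $\eta_i$). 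With your choice $\mathcal{E}=E_0-\eta(\cdots)$ one gets $\frac{\mathrm{d}}{\mathrm{d}t}\mathcal{E}=\frac{\mathrm{d}}{\mathrm{d}t}E_0+\eta\frac{\alpha}{2}\|\nabla u\|_{H^2}^2-\eta K\|\nabla\sigma\|_{H^2}^2+\cdots$, i.e.\ the manufactured term appears with the \emph{anti}-dissipative sign, and no inequality of the form $\frac{\mathrm{d}}{\mathrm{d}t}\mathcal{E}+cD\le C\delta D$ can follow; nothing else in the scheme supplies a negative multiple of $\|\nabla u\|_{H^2}^2$ to dominate it. The fix is trivial (flip the sign), and with it the rest of your argument --- equivalence of $\mathcal{E}$ with $\|(u,\tau)\|_{H^3}^2$ for $\eta$ small, absorption of $K\|\nabla\sigma\|_{H^2}^2$ into the $\tau$-damping via \eqref{bu_8}, cubic terms bounded by $C\delta D$, then $\delta$ and finally $\epsilon_0$ small --- reproduces the paper's proof (Lemmas \ref{lemma_regularity_1}--\ref{lemma_regularity_3}).

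A second, minor point: the device you single out as ``the delicate point'' --- rewriting $-\beta\langle\nabla^k u,\Lambda\nabla^k\sigma\rangle$ as $-\frac{\beta}{2K}\frac{\mathrm{d}}{\mathrm{d}t}\|\nabla^k u\|_{L^2}^2$ plus a cubic term via the first equation of \eqref{u_sigma_d}, and enlarging the functional accordingly --- is valid but unnecessary, and the worry motivating it is unfounded. Since $\Lambda$ is self-adjoint, $\langle u,\Lambda\sigma\rangle=\langle\Lambda u,\sigma\rangle$, so even at $k=0$ Young's inequality gives $\beta|\langle u,\Lambda\sigma\rangle|\le\frac{\alpha}{16}\|\Lambda u\|_{L^2}^2+\frac{4\beta^2}{\alpha}\|\sigma\|_{L^2}^2$: the first piece is absorbed by the $\frac{\alpha}{2}\|\Lambda u\|_{L^2}^2$ generated by that same cross term, and the second, which carries the small prefactor $\eta$, by the damping $\beta K\|\tau\|_{L^2}^2$ from the basic energy, using $\|\sigma\|_{L^2}\le C\|\tau\|_{L^2}$. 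This is precisely how the paper disposes of it (the estimate of $I_1$ in the proof of Lemma \ref{lemma_regularity_1}); there is no low-order obstruction, and your extra modification only adds bookkeeping.
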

	
	\medskip
	
	The proof of Proposition \ref{Prop2} consists of the following Lemmas \ref{lemma_regularity_1}, \ref{lemma_regularity_2} and \ref{lemma_regularity_3}.
	\begin{lemma}\label{lemma_regularity_1}
		Under the assumptions of Proposition \ref{Prop2}, there exists a sufficiently small positive constant $\eta_1$ independent of $\mu, T$ such that
		\begin{equation}\label{est_H1}
			\begin{split}
				& \udt (\alpha\| u\|_{H^1}^2 + K\|\tau\|_{H^1}^2 + \eta_1\langle\Lambda u, \sigma\rangle) +  \frac{\beta K}{2}\|\tau\|_{H^1}^2 +\frac{\eta_1\alpha}{4}\|\Lambda u\|_{L^2}^2 + \mu K\|\nabla\tau\|_{H^1}^2
				\leq 0,
			\end{split}
		\end{equation}
		for all $0\leq t \leq T$.
	\end{lemma}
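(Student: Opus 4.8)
The plan is to combine the natural $H^1$ energy estimate for $(u,\tau)$ with a carefully weighted cross term that manufactures the missing dissipation for the velocity. First I would run the standard energy estimates on the original system \eqref{Oldroyd_B_1}: pair the $u$-equation with $\alpha u$ and the $\tau$-equation with $K\tau$ in $L^2$, and likewise pair $\partial_m$ of each equation with $\alpha\partial_m u$ and $K\partial_m\tau$ and sum over $m$. Because $\mathrm{div}\,u=0$, the transport terms vanish at the $L^2$ level and reduce to the cubic quantities $-\alpha\langle(\nabla u\cdot\nabla)u,\nabla u\rangle$ and $-K\langle(\nabla u\cdot\nabla)\tau,\nabla\tau\rangle$ at the first-derivative level; the pressure drops out; and, using that $\tau$ stays symmetric so that $\langle\tau,\nabla u\rangle=\langle\tau,\mathbb{D}(u)\rangle$, the two coupling terms $K\,\mathrm{div}\,\tau$ and $\alpha\mathbb{D}(u)$ cancel exactly after integrating by parts in the divergence direction. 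This yields
\begin{equation*}
\frac12\udt(\alpha\|u\|_{H^1}^2+K\|\tau\|_{H^1}^2)+\beta K\|\tau\|_{H^1}^2+\mu K\|\nabla\tau\|_{H^1}^2=\mathcal N,
\end{equation*}
with $|\mathcal N|\le C\|\nabla u\|_{L^\infty}(\|\nabla u\|_{L^2}^2+\|\nabla\tau\|_{L^2}^2)\le C\delta(\|\Lambda u\|_{L^2}^2+\|\nabla\tau\|_{L^2}^2)$ by the $H^3$ bound and Sobolev embedding in two dimensions. The damping and diffusion of $\tau$ are thus controlled, but there is no dissipation for $u$ itself.

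To recover it I would differentiate the cross term $\langle\Lambda u,\sigma\rangle$ in time using the reformulated system \eqref{u_sigma_d}. Substituting $\partial_t u=K\Lambda\sigma+\mathcal F_1$ and $\partial_t\sigma=\mu\Delta\sigma-\beta\sigma-\tfrac\alpha2\Lambda u+\mathcal F_2$ and using self-adjointness of the Fourier multipliers gives
\begin{equation*}
\udt\langle\Lambda u,\sigma\rangle=-\frac\alpha2\|\Lambda u\|_{L^2}^2+K\|\nabla\sigma\|_{L^2}^2-\beta\langle\Lambda u,\sigma\rangle+\mu\langle\Lambda u,\Delta\sigma\rangle+\langle\Lambda\mathcal F_1,\sigma\rangle+\langle\Lambda u,\mathcal F_2\rangle.
\end{equation*}
The first term is exactly the negative definite contribution $-\tfrac\alpha2\|\Lambda u\|_{L^2}^2$ that the pure energy estimate lacked, which is why the combination $\eta_1\langle\Lambda u,\sigma\rangle$ appears in \eqref{est_H1}.

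The decisive step is then to form $\udt E$ with $E=\alpha\|u\|_{H^1}^2+K\|\tau\|_{H^1}^2+\eta_1\langle\Lambda u,\sigma\rangle$ (twice the energy identity plus $\eta_1$ times the cross-term identity) and to absorb every term on the right. The genuinely delicate point is the positive term $\eta_1 K\|\nabla\sigma\|_{L^2}^2\le C\eta_1 K\|\nabla\tau\|_{L^2}^2$ coming from the coupling: since the estimate must be uniform as $\mu\to0$, it cannot be charged to the vanishing diffusion $\mu K\|\nabla\tau\|_{H^1}^2$ and must instead be swallowed by the $\beta$-damping $\beta K\|\tau\|_{H^1}^2$, which forces $\eta_1$ small relative to $\beta$. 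The $\mu$-term is handled by keeping the Laplacian on $\sigma$ and pairing $\eta_1\mu\langle\Lambda u,\Delta\sigma\rangle$ against the diffusion: by \eqref{bu_8}, $\|\Delta\sigma\|_{L^2}\le C\|\nabla^2\tau\|_{L^2}$, so Young's inequality sends $\tfrac{\eta_1\alpha}{8}\|\Lambda u\|_{L^2}^2$ into the good velocity term and $C\eta_1\mu\|\nabla^2\tau\|_{L^2}^2$ into $\mu K\|\nabla\tau\|_{H^1}^2$ (using $\mu\le1$), again for small $\eta_1$. The linear term $\eta_1\beta\langle\Lambda u,\sigma\rangle$ splits by Young into a piece of $\|\Lambda u\|_{L^2}^2$ and an $O(\eta_1)\|\tau\|_{L^2}^2$ piece absorbed by the damping. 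Finally the cubic terms $\mathcal N$, $\eta_1\langle\Lambda\mathcal F_1,\sigma\rangle$ and $\eta_1\langle\Lambda u,\mathcal F_2\rangle$ are each bounded by $C\delta(\|\Lambda u\|_{L^2}^2+\|\nabla\tau\|_{L^2}^2)$ after integrating by parts in $\mathcal F_2$ (so that $\Lambda^{-1}$ cancels $\Lambda$ and $\mathbb{P}u=u$) and using $\|u\|_{L^\infty}\le C\delta$. Choosing first $\eta_1$ small (to kill all the $\eta_1$-linear bad terms) and then $\delta$ small depending on $\eta_1$ (to kill the cubic terms) leaves $\tfrac{\beta K}{2}\|\tau\|_{H^1}^2$, $\tfrac{\eta_1\alpha}{4}\|\Lambda u\|_{L^2}^2$ and $\mu K\|\nabla\tau\|_{H^1}^2$ on the left, which is \eqref{est_H1}. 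I expect the main obstacle to be precisely this $\mu$-uniform bookkeeping: arranging the Young splittings so that nothing essential is charged to the diffusion, and confirming that a single threshold for $\eta_1$ (then $\delta$) simultaneously closes all the absorptions.
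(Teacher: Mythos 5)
Your proposal is correct and follows essentially the same route as the paper: the $L^2$ and first-order energy identities (with the coupling terms cancelling by symmetry of $\tau$ and $\mathrm{div}\,u=0$), plus a small multiple $\eta_1$ of the cross-term identity for $\langle\Lambda u,\sigma\rangle$ derived from the reformulated system \eqref{u_sigma_d}, with the $K\|\nabla\sigma\|_{L^2}^2$, $\beta$-, and $\mu$-terms absorbed exactly as you describe ($\eta_1$ small relative to $\beta$, the $\mu$-term charged to the diffusion for bounded $\mu$, and $\delta$ chosen after $\eta_1$). The only cosmetic difference is that the paper's first-order estimate carries no velocity transport term (in 2D, $\langle(\nabla u\cdot\nabla)u,\nabla u\rangle$ vanishes identically for divergence-free $u$), whereas you retain it as a cubic term absorbed by the $\eta_1$-dissipation, which your ordering of constants handles consistently.
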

	\begin{proof}
		Multiplying (\ref{Oldroyd_B_1})$_1$ and (\ref{Oldroyd_B_1})$_2$ by $\alpha  u$ and $K \tau$, respectively, summing the results up, and using integration by parts, we have
		\begin{equation}\label{est_L2}
			\begin{split}
				&\frac12 \udt (\alpha\|u\|_{L^2}^2 + K\|\tau\|_{L^2}^2) +  \beta K\|\tau\|_{L^2}^2 + \mu K\|\nabla\tau\|_{L^2}^2	=  0.
			\end{split}
		\end{equation}
		Similarly, multiplying $\nabla$(\ref{Oldroyd_B_1})$_1$ and $\nabla$(\ref{Oldroyd_B_1})$_2$ by $\alpha\nabla u$ and $K  \nabla \tau$, respectively, we have
		\begin{equation*}
			\begin{split}
				&\frac12 \udt (\alpha\|\nabla u\|_{L^2}^2 + K\|\nabla\tau\|_{L^2}^2) +  \beta K\|\nabla\tau\|_{L^2}^2 + \mu K\|\nabla^2\tau\|_{L^2}^2\\
				=&  - \langle K \nabla(u\cdot\nabla\tau),\nabla\tau \rangle
				\,\le\, K\|\nabla u\|_{L^\infty}\|\nabla\tau\|_{L^2}^2
				\,\le\, C\delta K \|\nabla\tau\|_{L^2}^2.
			\end{split}
		\end{equation*}
		Letting $\delta\le \frac{\beta}{2C}$, then we obtain
		\begin{equation}\label{est_first}
			\begin{split}
				& \frac{1}{2}\udt (\alpha\|\nabla u\|_{L^2}^2 + K\|\nabla\tau\|_{L^2}^2) +  \frac{\beta K}{2}\|\nabla\tau\|_{L^2}^2 + \mu K\|\nabla^2\tau\|_{L^2}^2
				\leq 0.
			\end{split}
		\end{equation}	
	
		To derive the dissipative estimate of the velocity gradient, the equation of $\sigma$ plays an important role. More specifically, multiplying $\Lambda$(\ref{u_sigma_d})$_1$ and (\ref{u_sigma_d})$_2$ by $\sigma$ and $\Lambda u$, respectively, summing the results up, and using integration by parts, we have
		\begin{equation}\label{bu}	
			\begin{split}
				&\partial_t\langle\Lambda u, \sigma\rangle + \frac{\alpha}{2}\|\Lambda u\|_{L^2}^2\\
				= &\Big(K\|\Lambda \sigma\|_{L^2}^2 + \langle \mu\Delta\sigma,\Lambda u\rangle - \langle\beta\sigma,\Lambda u \rangle\Big)\\ &- \Big(\langle \Lambda\mathbb{P}(u\cdot \nabla u),\sigma\rangle + \langle \Lambda^{-1}\mathbb{P}\udiv(u\cdot \nabla \tau),\Lambda u\rangle\Big)\\
				=:&\,  I_1 - I_2.
			\end{split}
		\end{equation}
		For $I_1$ and $I_2$, using (\ref{bu_7}), we have that
		\begin{equation*}\label{bu_1}	
			\begin{split}
				|I_1|&\le K\|\Lambda \sigma\|_{L^2}^2 + \frac{\alpha}{16}\|\Lambda u\|_{L^2}^2 +  \frac{4\mu^2}{\alpha}\|\Delta\sigma\|_{L^2}^2 + \frac{\alpha}{16}\|\Lambda u\|_{L^2}^2 + \frac{4\beta^2}{\alpha}\|\sigma\|_{L^2}^2,\\
				|I_2|&\le \frac12\|\Lambda \sigma\|_{L^2}^2
				+ \frac12\|\mathbb{P}(u\cdot \nabla u)\|_{L^2}^2 +  \frac{\alpha}{16}\|\Lambda u\|_{L^2}^2  + \frac{4}{\alpha}\|\Lambda^{-1}\mathbb{P}\udiv(u\cdot \nabla \tau)\|_{L^2}^2\\
				&\le \frac12\|\Lambda \sigma\|_{L^2}^2 + C\|u\cdot \nabla u\|_{L^2}^2	 +  \frac{\alpha}{16}\|\Lambda u\|_{L^2}^2  + 	C\|u\cdot \nabla \tau\|_{L^2}^2.
			\end{split}
		\end{equation*}
		Then, substituting the above inequalities  into (\ref{bu}), we obtain that
		\begin{equation}\label{est_first_u}	
			\begin{split}
				&\partial_t\langle\Lambda u, \sigma\rangle + \frac{\alpha}{2}\|\Lambda u\|_{L^2}^2\\
				\leq & \,(\frac{3\alpha}{16} + C\delta^2)\|\Lambda u\|_{L^2}^2 + (K + \frac{4\beta^2}{\alpha} + \frac12)\|\sigma\|_{H^1}^2 + C\delta^2\|\tau\|_{H^1}^2 + C\mu^2\|\nabla^2\tau\|_{L^2}^2.
			\end{split}
		\end{equation}				
		Letting $\delta$ and $\eta_1>0$ small enough, then summing \eqref{est_L2}, \eqref{est_first} and $\eta_1$\eqref{est_first_u} up, and using \eqref{bu_8}, we get (\ref{est_H1}).
	\end{proof}
	
	\medskip
	
	In a similar way, we can obtain the following higher order estimates.
	\begin{lemma}\label{lemma_regularity_2}
		Under the assumptions of Proposition \ref{Prop2}, there exists a sufficiently small positive constant $\eta_2=\frac{\eta_1}{4}$ independent of $\mu, T$ such that
		\begin{equation}\label{est_H2}
			\begin{split}
				\udt (\alpha\| u\|_{H^2}^2 &+ K\|\tau\|_{H^2}^2 + \eta_1\langle\Lambda u, \sigma\rangle + \eta_2\langle\Lambda^2 u,\Lambda \sigma\rangle)\\ &+  \frac{\beta K}{4}\|\tau\|_{H^2}^2  + \frac{\eta_2\alpha}{8}\|\Lambda u\|_{H^1}^2 + \mu K\|\nabla\tau\|_{H^2}^2
				\leq  0,
			\end{split}
		\end{equation} for all $0\leq t \leq T$.
	\end{lemma}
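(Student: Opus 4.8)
The plan is to mirror exactly the structure used in Lemma \ref{lemma_regularity_1}, but applied one derivative higher, and then to \emph{add} the new estimate onto the $H^1$-estimate \eqref{est_H1} rather than re-deriving it. Concretely, I would first obtain the plain $\dot H^2$ energy estimate by multiplying $\nabla^2$\eqref{Oldroyd_B_1}$_1$ by $\alpha\nabla^2 u$ and $\nabla^2$\eqref{Oldroyd_B_1}$_2$ by $K\nabla^2\tau$, summing, and integrating by parts. The linear coupling terms $K\langle\nabla^2\mathrm{div}\,\tau,\nabla^2 u\rangle$ and $\alpha\langle\nabla^2\mathbb D(u),\nabla^2\tau\rangle$ cancel after integration by parts (as in the passage from \eqref{est_L2} to \eqref{est_first}), leaving the damping $\beta K\|\nabla^2\tau\|_{L^2}^2$, the diffusion $\mu K\|\nabla^3\tau\|_{L^2}^2$, and the convective commutator $-K\langle\nabla^2(u\cdot\nabla\tau),\nabla^2\tau\rangle$. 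This last term is the only genuinely new computation; I would control it by a standard Moser/commutator estimate, distributing the two derivatives so that every product is closed in $L^2\cdot L^\infty$ and bounded by $C\delta\,\|\tau\|_{H^2}^2$ using $\|\nabla u\|_{L^\infty}+\|\nabla^2 u\|_{L^\infty}\le C\|u\|_{H^3}\le C\delta$ and the a priori bound $\sup_{[0,T]}\|(u,\tau)\|_{H^3}\le\delta$. Choosing $\delta\le\beta/(4C)$ absorbs this into half the damping, giving the $\dot H^2$ analogue of \eqref{est_first}.

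Next I would produce the $\dot H^2$ dissipation estimate for the velocity gradient from the symmetrized $(u,\sigma)$ system \eqref{u_sigma_d}. The key observation is that $\langle\Lambda^2 u,\Lambda\sigma\rangle$ is exactly the right cross term at this level: applying $\Lambda^2$ to \eqref{u_sigma_d}$_1$ and testing against $\Lambda\sigma$, and applying $\Lambda$ to \eqref{u_sigma_d}$_2$ and testing against $\Lambda^2 u$, the principal parts combine to yield $\partial_t\langle\Lambda^2 u,\Lambda\sigma\rangle+\frac{\alpha}{2}\|\Lambda^2 u\|_{L^2}^2$ on the left, with a right-hand side of the same shape as in \eqref{est_first_u}: a small multiple of $\|\Lambda^2 u\|_{L^2}^2$, a controllable multiple of $\|\sigma\|_{H^2}^2$, the diffusive remainder $C\mu^2\|\nabla^3\tau\|_{L^2}^2$, and the nonlinear contributions from $\Lambda\mathcal F_1,\mathcal F_2$ estimated through \eqref{bu_7} as $C\delta^2\|(u,\tau)\|_{H^3}^2$. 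I would use \eqref{bu_8} to convert every $\sigma$-norm back into a $\tau$-norm.

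Finally comes the assembly. I would take the combination from \eqref{est_H1}, add the new $\dot H^2$ energy inequality, and add $\eta_2$ times the new cross-term inequality with $\eta_2=\eta_1/4$. The dissipation $\frac{\eta_1\alpha}{4}\|\Lambda u\|_{L^2}^2$ already present in \eqref{est_H1} absorbs the bad $\|\Lambda u\|_{L^2}^2$ produced at this level, while the fresh $\frac{\eta_2\alpha}{2}\|\Lambda^2 u\|_{L^2}^2$ (after discarding the small $(\tfrac{3\alpha}{16}+C\delta^2)$ piece) supplies the top-order velocity dissipation; the $\frac{\beta K}{2}$ and $\frac{\beta K}{4}$ damping terms dominate all the $\|\tau\|_{H^2}^2$ contributions coming from the cross-term once $\eta_1,\eta_2,\delta$ are chosen small. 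Collecting the surviving good terms into $\frac{\beta K}{4}\|\tau\|_{H^2}^2+\frac{\eta_2\alpha}{8}\|\Lambda u\|_{H^1}^2+\mu K\|\nabla\tau\|_{H^2}^2$ yields \eqref{est_H2}. I expect the only real obstacle to be bookkeeping: verifying that the constants multiplying $\|\sigma\|_{H^2}^2$ and the nonlinear $\|\tau\|_{H^2}^2$ terms can all be swallowed simultaneously by the single damping budget after the smallness choices, and ensuring the diffusive remainders $C\mu^2\|\nabla^3\tau\|_{L^2}^2$ are dominated by $\mu K\|\nabla^3\tau\|_{L^2}^2$ uniformly in $\mu$ (which holds for $\mu$ small since $\mu^2\ll\mu$), so that no term degrades as $\mu\to0$.
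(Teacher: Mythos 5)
Your strategy coincides with the paper's own proof: the same plain $\dot H^2$ energy estimate, the same cross term $\langle\Lambda^2 u,\Lambda\sigma\rangle$ obtained by testing $\Lambda^2$\eqref{u_sigma_d}$_1$ against $\Lambda\sigma$ and $\Lambda$\eqref{u_sigma_d}$_2$ against $\Lambda^2 u$, the same conversion of $\sigma$-norms into $\tau$-norms via \eqref{bu_7}--\eqref{bu_8}, and the same assembly \eqref{est_H1} $+$ \eqref{est_second} $+$ $\eta_2\,$\eqref{est_second_u} with $\eta_2=\eta_1/4$; your closing remark on absorbing $C\mu^2\|\nabla^3\tau\|_{L^2}^2$ into $\mu K\|\nabla^3\tau\|_{L^2}^2$ is also exactly the (implicit) treatment in the paper.

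Two bookkeeping slips in your first step should be repaired, though neither breaks the argument. First, the velocity convection does not reduce to ``the only genuinely new computation'' being the $\tau$-commutator: the term $-\alpha\langle\nabla^2(u\cdot\nabla u),\nabla^2 u\rangle$ survives as well (only its top part $\langle u\cdot\nabla\nabla^2 u,\nabla^2 u\rangle$ vanishes by incompressibility) and appears in \eqref{est_second}, bounded by $C\|\nabla u\|_{L^\infty}\|\nabla^2 u\|_{L^2}^2$. Second, the $\tau$-commutator is \emph{not} bounded by $C\delta\|\tau\|_{H^2}^2$ alone: the Leibniz expansion produces the mixed term $\|\nabla\tau\|_{L^\infty}\|\nabla^2 u\|_{L^2}\|\nabla^2\tau\|_{L^2}$, so the correct bound is $C\delta\bigl(\|\nabla^2 u\|_{L^2}^2+\|\nabla^2\tau\|_{L^2}^2\bigr)$. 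Consequently the choice $\delta\le\beta/(4C)$ cannot close this estimate by $\tau$-damping alone; the $C\delta\|\nabla^2 u\|_{L^2}^2$ part must be absorbed by the dissipation $\tfrac{\eta_2\alpha}{2}\|\Lambda^2 u\|_{L^2}^2$ supplied by the cross term, which forces the order of quantifiers used in the paper (fix $\eta_2=\eta_1/4$ first, then take $\delta$ small depending on $\eta_1,\eta_2$) --- your assembly paragraph implicitly allows this, but your step-one claim as stated is wrong. Finally, your appeal to $\|\nabla^2 u\|_{L^\infty}\le C\|u\|_{H^3}$ is false in two dimensions ($H^1\not\hookrightarrow L^\infty$); it is also unnecessary, since the Moser distribution of derivatives never places two derivatives on a factor measured in $L^\infty$.
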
		
	\begin{proof}
		Multiplying $\nabla^2$(\ref{Oldroyd_B_1})$_1$ and $\nabla^2$ (\ref{Oldroyd_B_1})$_2$ by $\alpha  \nabla^2 u$ and $K  \nabla^2 \tau$, respectively, summing the results up, and using integration by parts, we have
		\begin{align}\label{est_second}
			\begin{split}
				&\frac12 \udt (\alpha\|\nabla^2 u\|_{L^2}^2 + K\|\nabla^2\tau\|_{L^2}^2) +  \beta K\|\nabla^2\tau\|_{L^2}^2 + \mu K\|\nabla^3\tau\|_{L^2}^2\\
				= & - \,\langle K \nabla^2(u\cdot\nabla\tau),\nabla^2\tau \rangle - \langle\alpha \nabla^2(u\cdot\nabla u),\nabla^2u \rangle\\
				\leq & \,C(\|\nabla \tau\|_{L^\infty}\|\nabla^2 u\|_{L^2}\|\nabla^2 \tau\|_{L^2} +  \|\nabla u\|_{L^\infty}\|\nabla^2 \tau\|_{L^2}^2 +  \|\nabla u\|_{L^\infty}\|\nabla^2 u\|_{L^2}^2)\\
				\leq &\, C\delta \|\nabla^2 u\|_{L^2}^2 + C\delta \|\nabla^2\tau\|_{L^2}^2.
			\end{split}
		\end{align}
		Multiplying $\Lambda^2$(\ref{u_sigma_d})$_1$ and  $\Lambda$(\ref{u_sigma_d})$_2$ by $\Lambda\sigma$ and  $\Lambda^2 u$, respectively, summing the results up, and using integration by parts, we have
		\begin{align}\label{bu_2}
			\begin{split}
				&\partial_t\langle\Lambda^2 u,\Lambda \sigma\rangle + \frac{\alpha}{2}\|\Lambda^2 u\|_{L^2}^2\\
				= & \,\Big(K\|\Lambda^2 \sigma\|_{L^2}^2 + \langle \mu\Lambda\,\Delta\sigma,\Lambda^2 u\rangle - \langle\beta\Lambda\sigma,\Lambda^2 u \rangle\Big)\\ &- \Big(\langle \Lambda^2\mathbb{P}(u\cdot \nabla u),\Lambda\sigma\rangle + \langle \mathbb{P}\udiv(u\cdot \nabla \tau),\Lambda^2 u\rangle\Big)\\
				=: & \,I_3 - I_4.
			\end{split}
		\end{align}
		For $I_3$ and $I_4$, using (\ref{bu_7}), we have that
		\begin{align}
			|I_3|&\le K\|\Lambda^2 \sigma\|_{L^2}^2 + \frac{\alpha}{16}\|\Lambda^2 u\|_{L^2}^2 + \frac{4\mu^2}{\alpha}\|\Lambda\,\Delta\sigma\|_{L^2}^2  + \frac{\alpha}{16}\|\Lambda^2 u\|_{L^2}^2 + \frac{4\beta^2}{\alpha}\|\Lambda\sigma\|_{L^2}^2,\label{I3}\\
			\nonumber |I_4|&\le \frac12\|\Lambda^2 \sigma\|_{L^2}^2
			+  \frac12\|\Lambda\mathbb{P}(u\cdot \nabla u)\|_{L^2}^2
			+  \frac{\alpha}{16}\|\Lambda^2 u\|_{L^2}^2 + \frac{4}{\alpha}\|\mathbb{P}\udiv(u\cdot \nabla \tau)\|_{L^2}^2\\
			&\le \frac12\|\Lambda^2 \sigma\|_{L^2}^2
			+  C\|\nabla(u\cdot \nabla u)\|_{L^2}^2 + \frac{\alpha}{16}\|\Lambda^2 u\|_{L^2}^2 + C\|\nabla(u\cdot \nabla \tau)\|_{L^2}^2\label{I4}.
		\end{align}
		Substituting (\ref{I3}) and (\ref{I4}) into (\ref{bu_2}), we get
		\begin{align}\label{est_second_u}
			\begin{split}
				&\partial_t\langle\Lambda^2 u,\Lambda \sigma\rangle + \frac{\alpha}{2}\|\Lambda^2 u\|_{L^2}^2\\
				\leq & \,(\frac{3\alpha}{16} + C\delta^2)\|\Lambda^2 u\|_{L^2}^2 + C\delta^2\|\nabla u\|_{L^2}^2\\
				&+ (K + \frac{4\beta^2}{\alpha} + \frac12)\|\nabla\sigma\|_{H^1}^2 + C\delta^2\|\nabla\tau\|_{H^1}^2 + C\mu^2\|\nabla^3\tau\|_{L^2}^2,
			\end{split}
		\end{align}
		where the facts that
		\begin{equation*}\label{bu_20}
			\begin{split}
				\|\nabla(u\cdot \nabla u)\|_{L^2}\le \| \nabla u\|_{L^\infty}\|\nabla u\|_{L^2} + \|  u\|_{L^\infty}\|\nabla^2 u\|_{L^2},\\
				\|\nabla(u\cdot \nabla \tau)\|_{L^2}\le \|\nabla u\|_{L^\infty}\|\nabla \tau\|_{L^2} + \| u\|_{L^\infty}\|\nabla^2 \tau\|_{L^2},
			\end{split}
		\end{equation*}
		are used.
		
		Letting $\eta_2= \frac14 \eta_1$  and $\delta$ small enough, then summing \eqref{est_H1}, \eqref{est_second} and $\eta_2$\eqref{est_second_u} up, and using \eqref{bu_8}, we get (\ref{est_H2}).
	\end{proof}
	
	Finally, we get the following uniform estimates up to the third order.
	\begin{lemma}\label{lemma_regularity_3}
		Under the assumptions of Proposition \ref{Prop2}, there holds
		\begin{equation}\label{eq_a_priori_est}
			\begin{cases}
				\|(u,\tau)(t)\|_{H^3}^2  \leq \frac{\delta}{2},\\[4mm]
				\displaystyle\int_0^t\left(\|\nabla u(s)\|_{H^2}^2 + \|\tau(s)\|_{H^3}^2 + \mu\|\nabla\tau(s)\|_{H^3}^2\right){\rm d}s\le C,
			\end{cases}
		\end{equation}for all $0\leq t \leq T.$
	\end{lemma}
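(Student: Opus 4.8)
The plan is to reproduce at the third order the two-step scheme already carried out in Lemmas \ref{lemma_regularity_1} and \ref{lemma_regularity_2}, and then close the a priori estimate by a continuation argument. First I would derive the third-order energy identity: applying $\nabla^3$ to \eqref{Oldroyd_B_1}$_1$ and \eqref{Oldroyd_B_1}$_2$, pairing with $\alpha\nabla^3 u$ and $K\nabla^3\tau$ and summing, the top-order transport contributions $\langle u\cdot\nabla\nabla^3\tau,\nabla^3\tau\rangle$ and $\langle u\cdot\nabla\nabla^3 u,\nabla^3 u\rangle$ vanish by $\udiv u=0$. The surviving commutators $\langle\nabla^3(u\cdot\nabla\tau)-u\cdot\nabla\nabla^3\tau,\nabla^3\tau\rangle$ and its $u$-analogue are controlled by Moser/Kato--Ponce product estimates together with the two-dimensional embedding $H^2\hookrightarrow L^\infty$, yielding the analogue of \eqref{est_second},
\begin{equation*}
\tfrac12\udt(\alpha\|\nabla^3 u\|_{L^2}^2+K\|\nabla^3\tau\|_{L^2}^2)+\beta K\|\nabla^3\tau\|_{L^2}^2+\mu K\|\nabla^4\tau\|_{L^2}^2\le C\delta(\|\nabla^3 u\|_{L^2}^2+\|\nabla^3\tau\|_{L^2}^2).
\end{equation*}

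Second, to recover dissipation of $\nabla^3 u$ I would use the reformulated system \eqref{u_sigma_d}: apply $\Lambda^3$ to \eqref{u_sigma_d}$_1$ and $\Lambda^2$ to \eqref{u_sigma_d}$_2$, pair with $\Lambda^2\sigma$ and $\Lambda^3 u$ respectively, and sum. Exactly as in \eqref{bu_2}--\eqref{est_second_u} this produces $\frac{\alpha}{2}\|\Lambda^3 u\|_{L^2}^2$ on the left, with error terms $K\|\Lambda^3\sigma\|_{L^2}^2$, $\tfrac{4\mu^2}{\alpha}\|\Lambda^2\Delta\sigma\|_{L^2}^2$, $\tfrac{4\beta^2}{\alpha}\|\Lambda^2\sigma\|_{L^2}^2$, and the nonlinear pieces $C\|\nabla^2(u\cdot\nabla u)\|_{L^2}^2+C\|\nabla^2(u\cdot\nabla\tau)\|_{L^2}^2\le C\delta^2(\cdots)$. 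Recalling from \eqref{sigma} that $\sigma$ is obtained from $\tau$ through a homogeneous Fourier multiplier of degree zero, so that $\|\nabla^k\sigma\|_{L^2}\le C\|\nabla^k\tau\|_{L^2}$ for every $k\ge 0$ (extending \eqref{bu_8} to $k=4$), the $\mu^2$-term obeys $\tfrac{4\mu^2}{\alpha}\|\Lambda^2\Delta\sigma\|_{L^2}^2\le C\mu^2\|\nabla^4\tau\|_{L^2}^2$ and is therefore absorbed by the $\mu K\|\nabla^4\tau\|_{L^2}^2$ generated in the previous step once $\eta_3$ and $\mu$ are bounded; this is the mechanism that keeps the whole estimate uniform in $\mu$.

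Third, I would set $\eta_3=\tfrac14\eta_2$, sum \eqref{est_H2}, the third-order energy estimate, and $\eta_3$ times the third-order $\sigma$-estimate, and use the multiplier bound above to replace every $\sigma$-norm by the corresponding $\tau$-norm. For $\delta$, $\mu$ and the $\eta_i$ small all error terms are swallowed by the dissipation, giving a clean differential inequality
\begin{equation*}
\udt E(t)+c\bigl(\|\nabla u\|_{H^2}^2+\|\tau\|_{H^3}^2+\mu\|\nabla\tau\|_{H^3}^2\bigr)\le 0,\quad E(t)=\alpha\|u\|_{H^3}^2+K\|\tau\|_{H^3}^2+\textstyle\sum_{k=1}^{3}\eta_k\langle\Lambda^k u,\Lambda^{k-1}\sigma\rangle.
\end{equation*}
By Cauchy--Schwarz and the multiplier bound the cross terms are dominated by $C\|u\|_{H^3}\|\tau\|_{H^3}$, so for the $\eta_i$ small $E$ is equivalent to the $H^3$ norm: $C_1^{-1}\|(u,\tau)\|_{H^3}^2\le E(t)\le C_1\|(u,\tau)\|_{H^3}^2$.

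Finally, integrating over $[0,t]$ gives $E(t)+c\int_0^t(\cdots)\,\ud s\le E(0)\le C_1\|(u_0,\tau_0)\|_{H^3}^2\le C_1\epsilon_0^2$, whence $\|(u,\tau)(t)\|_{H^3}^2\le C_1 E(t)\le C_1^2\epsilon_0^2$. Since $\delta$ is already fixed by the absorption conditions of Steps 1--3, I may then choose $\epsilon_0$ so small that $C_1^2\epsilon_0^2\le\delta/2$, which gives the first line of \eqref{eq_a_priori_est} with no circularity; the same inequality yields $\int_0^t(\cdots)\,\ud s\le C_1\epsilon_0^2/c\le C$, the second line. I expect the main obstacle to be the third-order nonlinear estimates of Step 1: the Leibniz expansion of $\nabla^3(u\cdot\nabla\tau)$ and $\nabla^3(u\cdot\nabla u)$ must be organized so that, after $\udiv u=0$ removes the top-order transport term, every remaining product closes via $H^2\hookrightarrow L^\infty$ and the bound $\|(u,\tau)\|_{H^3}\le\delta$ --- this is precisely where the two-dimensional function-space structure is used.
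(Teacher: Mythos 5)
Your proposal is correct and follows essentially the same route as the paper: the third-order energy estimate (the paper's \eqref{est_third}), the cross estimate $\partial_t\langle\Lambda^3 u,\Lambda^2\sigma\rangle$ from the reformulated system \eqref{u_sigma_d} to recover dissipation of $\nabla^3 u$ (the paper's \eqref{est_third_u}), summation with $\eta_3=\tfrac14\eta_2$ against \eqref{est_H2}, the equivalence of the weighted functional with the $H^3$ norm (the paper's \eqref{bu_9}), and finally integration plus the choice of $\epsilon_0$ relative to the already-fixed $\delta$. Your handling of the $\mu$-dependent terms (absorbing $C\mu^2\|\nabla^4\tau\|_{L^2}^2$ into $\mu K\|\nabla^4\tau\|_{L^2}^2$ for bounded $\mu$ and small $\eta_3$) and your non-circularity remark match the paper's argument.
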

	\begin{proof}
		Multiplying $\nabla^3$(\ref{Oldroyd_B_1})$_1$ and $\nabla^3$ (\ref{Oldroyd_B_1})$_2$ by $\alpha  \nabla^3 u$ and $K  \nabla^3 \tau$, respectively, summing the results up, and using integration by parts, we have
		\begin{equation}\label{est_third}
			\begin{split}
				&\frac12 \udt (\alpha\|\nabla^3 u\|_{L^2}^2 + K\|\nabla^3\tau\|_{L^2}^2) +  \beta K\|\nabla^3\tau\|_{L^2}^2 + \mu K\|\nabla^4\tau\|_{L^2}^2\\
				= & -\, \langle K \nabla^3(u\cdot\nabla\tau),\nabla^3\tau \rangle - \langle\alpha \nabla^3(u\cdot\nabla u),\nabla^3u \rangle\\
				\leq & \,C(\|\nabla u\|_{L^\infty}\|\nabla^3 \tau\|_{L^2}^2 + \|\nabla \tau\|_{L^\infty}\|\nabla^3 u\|_{L^2}\|\nabla^3 \tau\|_{L^2} + \|\nabla u\|_{L^\infty}\|\nabla^3 u\|_{L^2}^2  )\\
				\leq & \,C\delta \|\nabla^3 u\|_{L^2}^2 + C\delta\|\nabla^3\tau\|_{L^2}^2.
			\end{split}
		\end{equation}
		Similarly, multiplying $\Lambda^3$(\ref{u_sigma_d})$_1$ and $\Lambda^2$(\ref{u_sigma_d})$_2$ by $\Lambda^2\sigma$ and $\Lambda^3 u$, respectively, and using integration by parts, we have
		\begin{align}\label{bu_4}
			\begin{split}
				&\partial_t\langle\Lambda^3 u,\Lambda^2 \sigma\rangle + \frac{\alpha}{2}\|\Lambda^3 u\|_{L^2}^2\\
				= & \,\Big(K\|\Lambda^3 \sigma\|_{L^2}^2 + \langle \mu\Lambda^2\Delta\sigma,\Lambda^3 u\rangle   - \langle\beta\Lambda^2\sigma,\Lambda^3 u \rangle\Big)\\ &- \Big(\langle \Lambda^3\mathbb{P}(u\cdot \nabla u),\Lambda^2\sigma\rangle + \langle \Lambda\mathbb{P}\udiv(u\cdot \nabla \tau),\Lambda^3 u\rangle\Big)\\
				=: &\, I_5 - I_6.
			\end{split}
		\end{align}
		Using (\ref{bu_7}),	we can obtain the estimates of $I_5$ and $I_6$ as follows
		\begin{align}
			|I_5|&\le K\|\Lambda^3 \sigma\|_{L^2}^2 + \frac{\alpha}{16}\|\Lambda^3 u\|_{L^2}^2   + \frac{4\mu^2}{\alpha}\|\Lambda^2\Delta\sigma\|_{L^2}^2  + \frac{\alpha}{16}\|\Lambda^3 u\|_{L^2}^2 + \frac{4\beta^2}{\alpha}\|\Lambda^2\sigma\|_{L^2}^2,\label{I5}\\
			\nonumber|I_6|&\le \frac12\|\Lambda^3 \sigma\|_{L^2}^2
			+  \frac12\|\Lambda^2\mathbb{P}(u\cdot \nabla u)\|_{L^2}^2 +  \frac{\alpha}{16}\|\Lambda^3 u\|_{L^2}^2 + \frac{4}{\alpha}\|\Lambda\mathbb{P}\udiv(u\cdot \nabla \tau)\|_{L^2}^2\\
			&\le \frac12\|\Lambda^3 \sigma\|_{L^2}^2
			+  C\|\nabla^2(u\cdot \nabla u)\|_{L^2}^2 + \frac{\alpha}{16}\|\Lambda^3 u\|_{L^2}^2 + C\|\nabla^2(u\cdot \nabla \tau)\|_{L^2}^2.\label{I6}
		\end{align}
		Substituting (\ref{I5}) and (\ref{I6}) into (\ref{bu_4}), we can deduce that
		\begin{equation}\label{est_third_u}
			\begin{split}
				&\partial_t\langle\Lambda^3 u,\Lambda^2 \sigma\rangle + \frac{\alpha}{2}\|\Lambda^3 u\|_{L^2}^2\\
				\leq & \,(\frac{3\alpha}{16} + C\delta^2)\|\Lambda^3 u\|_{L^2}^2 + C\delta^2\|\nabla^2 u\|_{L^2}^2\\ &+ (K + \frac{4\beta^2}{\alpha} + \frac12)\|\nabla^2\sigma\|_{H^1}^2 + C\delta^2 \|\nabla^2\tau\|_{H^1}^2 + C\mu^2 \|\nabla^4\tau\|_{L^2}^2,
			\end{split}
		\end{equation}
		where the facts that
		\begin{equation}\label{bu_21}
			\begin{split}
				\|\nabla^2(u\cdot \nabla u)\|_{L^2}&\le \|  u\|_{L^\infty}\|\nabla^3 u\|_{L^2} + 3\| \nabla u\|_{L^\infty}\|\nabla^2 u\|_{L^2},\\
				\|\nabla^2(u\cdot \nabla \tau)\|_{L^2}&\le \| u\|_{L^\infty}\|\nabla^3 \tau\|_{L^2} + 2\|\nabla u\|_{L^\infty}\|\nabla^2 \tau\|_{L^2} + \|\nabla \tau\|_{L^\infty}\|\nabla^2 u\|_{L^2},
			\end{split}
		\end{equation}
		are used.
		
		Letting $\eta_3:= \frac14 \eta_2$  and $\delta$ small enough, summing \eqref{est_H2}, \eqref{est_third} and $\eta_3$\eqref{est_third_u} up, and using \eqref{bu_8}, we obtain that
		\begin{equation}\label{est_H3}
			\begin{split}
				\udt (\alpha\| u\|_{H^3}^2 &+ K\|\tau\|_{H^3}^2 + \sum_{i=1}^{3}\eta_i\langle\Lambda^i u,\Lambda^{i-1}\sigma\rangle)\\ &+  \frac{\beta K}{8}\|\tau\|_{H^3}^2  + \frac{\eta_3\alpha}{16}\|\Lambda u\|_{H^2}^2 + \mu K\|\nabla\tau\|_{H^3}^2
				\leq  0.
			\end{split}
		\end{equation}
		From the definition of $\eta_1$, $\eta_2$ and $\eta_3$, we have that
		\begin{equation}\label{bu_9}
			\begin{split}
				\frac12(\alpha\| u\|_{H^3}^2 + K\|\tau\|_{H^3}^2)&\le \alpha\| u\|_{H^3}^2 + K\|\tau\|_{H^3}^2 + \sum_{i=1}^{3}\eta_i\langle\Lambda^i u,\Lambda^{i-1}\sigma\rangle\\ &\le 2(\alpha\| u\|_{H^3}^2 + K\|\tau\|_{H^3}^2).
			\end{split}
		\end{equation}
		For all $0\leq t \leq T,$ integrating (\ref{est_H3}) over $[0, t]$  and utilizing   (\ref{bu_9}), we have that
		\begin{equation}\label{bu_10}
			\begin{split}
				&\frac12(\alpha\| u(t)\|_{H^3}^2 + K\|\tau(t)\|_{H^3}^2)  \\&+\int_0^t\left(\frac{\eta_3\alpha}{16}\|\nabla u(s)\|_{H^2}^2 + \frac{\beta K}{8}\|\tau(s)\|_{H^3}^2 + \mu K\|\nabla\tau(s)\|_{H^3}^2\right){\rm d}s\\
				\le &\,2(\alpha\| u_0\|_{H^3}^2 + K\|\tau_0\|_{H^3}^2)\,\le\,(2\alpha + 2K )\epsilon_0^2.
			\end{split}
		\end{equation}
		Letting
		\begin{equation*}\label{bu_11}
			\begin{split}
				\frac{4(\alpha+K)}{\min\{\alpha,K\}}\epsilon_0^2\le \frac{\delta}{2},
			\end{split}
		\end{equation*}
		we get (\ref{eq_a_priori_est})$_1$ from (\ref{bu_10}). Using (\ref{bu_10}) again, we get (\ref{eq_a_priori_est})$_2$ for some known positive constant $C$.
		
	\end{proof}
	
	\bigskip
	With Lemma \ref{lemma_regularity_3}, we finish the proof of Proposition \ref{Prop2}. Now we come to the proof of Theorem \ref{approximate solution} by using the standard continuity method.
	\subsection*{Proof of Theorem \ref{approximate solution}}
	For any fixed $\mu>0$, since
	\begin{equation*}
		\|(u_0,\tau_0)\|_{H^3}^2\le \epsilon_0^2\le \frac{\delta}{2},
	\end{equation*}
	and
	\begin{equation*}
		\|(u^\mu,\tau^\mu)(t)\|\in C([0,\infty);H^{3}(\mathbb{R}^2)),
	\end{equation*}
	there exists a time $T=T(\mu)>0$, such that
	\begin{equation}\label{conclusion}
		\|(u^\mu,\tau^\mu)(t)\|_{H^3}\leq \delta,
	\end{equation}	for all $t\in[0,T]$.
	
	Letting $T^*$ be the maximal life span such that (\ref{conclusion}) holds. In view of (\ref{conclusion}), it holds that $T^*>0$.
	Suppose that $T^*<+\infty$. Then, the continuity of the solution with respect to time yields that (\ref{conclusion}) holds on $[0,T^*]$, i.e.,
	\begin{equation}\label{continuity_method}
		\sup_{0\le s \le T^*}\|(u^\mu,\tau^\mu)(s)\|_{H^3}\le \delta.
	\end{equation}
	Then (\ref{continuity_method}) and Proposition \ref{Prop2} conclude that
	\begin{equation}\label{apriori-assum1}
		\|(u^\mu,\tau^\mu)(t)\|_{H^3}^2 \leq \frac{\delta}{2},
	\end{equation} for all $t\in[0,T^*].$
	Using (\ref{apriori-assum1}) and the continuity of the solution with respect to time again, we obtain that
	$T^*$ in (\ref{continuity_method}) can be replaced by $T^*+\sigma_0$ for a positive constant $\sigma_0$. This is a contradiction with the definition of $T^*$. Therefore $T^*$ must be $+\infty$.
	
	Hence for all $\mu>0$ and $t> 0$, there holds
	\begin{equation*}
		\begin{split}
			\|(u^\mu,\tau^\mu)(t)\|_{H^3}^2\le\delta.
		\end{split}
	\end{equation*}
	This together with (\ref{bu_10}) finishes the proof of Theorem \ref{approximate solution}.
	
	\section{Global existence and uniqueness}\label{Section_4}
	This section aims to  complete the proof of Theorem \ref{wellposedness}. By virtue of the uniform estimates  stated in Theorem \ref{approximate solution}, i.e.,
	\begin{equation*}
		\|(u^\mu,\tau^\mu)(t)\|_{H^3}^2 + \int_0^t (\|\nabla u^\mu(s)\|_{H^2}^2 + \|\tau^\mu(s)\|_{H^3}^2 + \mu\|\nabla\tau^\mu(s)\|_{H^3}^2){\rm d}s \leq C\|(u_0,\tau_0)\|_{H^3}^2.
	\end{equation*}
	Combining the above inequality with the equation \eqref{Oldroyd_B_1}, we can easily obtain that
	\begin{equation*}
		\|(\partial_t u^\mu,\partial_t \tau^\mu)(t)\|_{H^2}^2\le C.
	\end{equation*}

	By virtue of some standard weak (or weak*) convergence results and the Aubin-Lions Lemma (see for instance \cite{Simon 1987}), there exists a $(u,\tau)\in L^\infty([0,\infty);H^3(\mathbb{R}^2))$ which is a limit of $(u^\mu,\tau^\mu)$ (take subsequence if necessary) in some sense and solves (\ref{approximate solution}).
	
	For the uniqueness, we suppose that there are two pairs of the solutions $(u_1,\tau_1)$ and $(u_2,\tau_2)$. Denote $w=u_1-u_2$, $v=\tau_1-\tau_2$ satisfying
	\begin{equation} \label{convergence_6}
		\begin{cases}
			\partial_tw+(w\cdot\nabla) u_1 +u_2\cdot\nabla w +\nabla (p_1-p_2)=K\, {\rm div}\,v,\\
			\partial_tv+(w\cdot\nabla)\tau_1+u_2\cdot\nabla v+\beta v=\alpha\mathbb{D}(w).
		\end{cases}
	\end{equation}
	Multiplying (\ref{convergence_6})$_1$ and (\ref{convergence_6})$_2$ by $\alpha  w$ and $K v$, respectively, summing the results up, and using integration by parts, we have
	\begin{equation*}
		\begin{split}
			&\frac12 \udt (\alpha\|w\|_{L^2}^2 + K\|v\|_{L^2}^2) +  \beta K\|v\|_{L^2}^2\\
			\le&-\alpha\langle w\cdot\nabla u_1,w\rangle- \alpha\langle u_2\cdot\nabla w,w\rangle-K\langle w\cdot\nabla\tau_1,v\rangle-K\langle u_2\cdot\nabla v,v\rangle\\
			\le&\,C(\alpha\|w\|_{L^2}^2 + K\|v\|_{L^2}^2),
		\end{split}
	\end{equation*}
	which implies
	\begin{equation*}
		\alpha\|w(t)\|_{L^2}^2 + K\|v(t)\|_{L^2}^2\le e^{Ct}(\alpha\|w(0)\|_{L^2}^2 + K\|v(0)\|_{L^2}^2)=0.
	\end{equation*}
	Thus, $w=u_1-u_2=0$ and $v=\tau_1-\tau_2=0$. The proof of the uniqueness is complete.
	
	\section{Decay estimates for the nonlinear system}\label{Section_5}
	In this section, we will establish the upper and lower decay estimates to the solutions of the Cauchy problem \eqref{Oldroyd_B_d} and finish the  proof of Theorem \ref{thm_OB_d_decay}. We consider $\mu=0$ in \eqref{u_sigma_d}:
	\begin{eqnarray} \label{u_sigma_1}
		\begin{cases}
			\partial_tu-K \Lambda\sigma=\mathcal{F}_1,\\
			\partial_t\sigma +\beta\sigma+\frac{\alpha}{2}\Lambda u=\mathcal{F}_2,
		\end{cases}
	\end{eqnarray} where
	\begin{eqnarray*}
		\mathcal{F}_1=-\mathbb{P}\left(u\cdot\nabla u\right),\
		\mathcal{F}_2=-\Lambda^{-1}\mathbb{P}{\rm div}\left(u\cdot\nabla\tau\right).
	\end{eqnarray*}
	
	\subsection{Some estimates of the low-frequency parts}
	
	Consider the linear part of the system \eqref{u_sigma_1}, i.e.,
	\begin{equation} \label{Greenfunction_1}
		\begin{cases}
			\partial_tu-K \Lambda\sigma=0,\\
			\partial_t\sigma +\beta\sigma+\frac{\alpha}{2}\Lambda u=0.
		\end{cases}
	\end{equation}
	Note that the 3{D} case of \eqref{Greenfunction_1} with viscosity and diffusion has already been analyzed by Huang, the second author, the third author, and Zi (\cite{Huang 2022}) (see Lemmas 2.1, 4.1, 4.5 and 4.6 and Proposition 2.3 therein). After some slight modifications, we can get the similar results in the 2{D} case.
	
	To begin with, applying Fourier transform to system \eqref{Greenfunction_1}, we get that
	\begin{equation} \label{Greenfunction_2}
		\begin{cases}
			\partial_t\hat{u}^j-K|\xi|\hat{\sigma}^j=0,\\
			\partial_t\hat{\sigma}^j+\beta\hat{\sigma}^j+\frac{\alpha}{2}|\xi| \hat{u}^j=0.
		\end{cases}
	\end{equation}
	\begin{lemma}\label{lemma_Greenfunction_1}
		(Lemma 2.1, \cite{Huang 2022} for the case $\mu, \varepsilon=0$) The system \eqref{Greenfunction_2} can be solved as follows  :
		\begin{equation*}
			\begin{cases}
				\hat{u} = \mathcal{G}_3 \hat{u}_0 + K|\xi|\mathcal{G}_1\hat{\sigma}_0,\\
				\hat{\sigma} = -\frac{\alpha}{2}|\xi|\mathcal{G}_1 \hat{u}_0 + \mathcal{G}_2\hat{\sigma}_0,
			\end{cases}
		\end{equation*}
		where
		\begin{equation}\label{lemma_Greenfunction_3+1}
			\begin{split}
				\mathcal{G}_1(\xi,t)=\frac{e^{\lambda_+t}-e^{\lambda_-t}}{\lambda_+-\lambda_-}, \ \mathcal{G}_2(\xi,t)&=\frac{\lambda_+e^{\lambda_+t}-\lambda_-e^{\lambda_-t}}{\lambda_+-\lambda_-}, \\ \mathcal{G}_3(\xi,t)=\frac{\lambda_+e^{\lambda_-t}-\lambda_-e^{\lambda_+t}}{\lambda_+-\lambda_-},\,
				\lambda_{\pm}&=\frac{-\beta\pm\sqrt{\beta^2-2\alpha K|\xi|^2}}{2}.
			\end{split}
		\end{equation}
	\end{lemma}
	Due to the explicit expression of the solution, we can easily get some estimates as follows.
	\begin{lemma}\label{lemma_Greenfunction_4}
		There exist  positive constants $R=R(\alpha,\beta,K)$,  $\theta=\theta(\alpha,\beta,K)$ and $C=C(\alpha,\beta,K)$, such that, for any $|\xi|\leq R$ and $t>0$, it holds that
		\begin{equation*}\label{lemma_Greenfunction_5}
			\begin{split}
				&\left|\mathcal{G}_1(\xi,t)\right|,\left|\mathcal{G}_3(\xi,t)\right|\leq Ce^{-\theta|\xi|^2t},\\
				&|\mathcal{G}_2(\xi,t)|
				\leq C\left(|\xi|^2 e^{-\theta|\xi|^2t} + e^{-\frac{\beta t}{2}}\right).
			\end{split}
		\end{equation*}
	\end{lemma}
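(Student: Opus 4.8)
The statement to prove is Lemma~\ref{lemma_Greenfunction_4}, giving low-frequency bounds on the Green functions $\mathcal{G}_1,\mathcal{G}_2,\mathcal{G}_3$ defined in \eqref{lemma_Greenfunction_3+1}. Since these functions are written explicitly in terms of the eigenvalues $\lambda_\pm = \frac{-\beta \pm \sqrt{\beta^2 - 2\alpha K|\xi|^2}}{2}$, the natural approach is a direct asymptotic analysis of $\lambda_\pm$ on the low-frequency regime and then substitution into the explicit formulas. First I would choose the cutoff radius $R$ so small that $\beta^2 - 2\alpha K|\xi|^2 \geq \tfrac{1}{2}\beta^2 > 0$ for $|\xi| \leq R$; this guarantees both eigenvalues are real and negative, and keeps the discriminant bounded away from zero, which is what makes $\lambda_+ - \lambda_- = \sqrt{\beta^2 - 2\alpha K|\xi|^2}$ bounded below (so we never divide by something small).

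\textbf{Key steps.}
The crux is to Taylor-expand the two eigenvalues for small $|\xi|$. Using $\sqrt{\beta^2 - 2\alpha K|\xi|^2} = \beta\sqrt{1 - \tfrac{2\alpha K}{\beta^2}|\xi|^2}$ and the expansion $\sqrt{1-x} = 1 - \tfrac{x}{2} + O(x^2)$, one finds
\begin{equation*}
	\lambda_+ = -\frac{\alpha K}{2\beta}|\xi|^2 + O(|\xi|^4), \qquad
	\lambda_- = -\beta + \frac{\alpha K}{2\beta}|\xi|^2 + O(|\xi|^4).
\end{equation*}
Thus $\lambda_+$ is a small negative number governing a heat-type decay $e^{-\theta|\xi|^2 t}$ (with $\theta$ chosen slightly below $\tfrac{\alpha K}{2\beta}$ to absorb the higher-order terms after shrinking $R$), while $\lambda_-$ stays near $-\beta$ and contributes the fast-decaying factor $e^{-\beta t/2}$. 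For $\mathcal{G}_1 = \frac{e^{\lambda_+ t} - e^{\lambda_- t}}{\lambda_+ - \lambda_-}$ and $\mathcal{G}_3 = \frac{\lambda_+ e^{\lambda_- t} - \lambda_- e^{\lambda_+ t}}{\lambda_+ - \lambda_-}$, since $\lambda_+ - \lambda_-$ is bounded below and $e^{\lambda_\pm t} \leq e^{\lambda_+ t} \leq e^{-\theta|\xi|^2 t}$ (using $\lambda_- < \lambda_+ \leq -\theta|\xi|^2$), the desired bound $|\mathcal{G}_1|, |\mathcal{G}_3| \leq C e^{-\theta|\xi|^2 t}$ follows; for $\mathcal{G}_3$ one also uses that $|\lambda_-|/|\lambda_+ - \lambda_-| \leq C$ while the $\lambda_+$ prefactor is $O(|\xi|^2)$ and can be absorbed. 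The more delicate one is $\mathcal{G}_2 = \frac{\lambda_+ e^{\lambda_+ t} - \lambda_- e^{\lambda_- t}}{\lambda_+ - \lambda_-}$: the first term carries the prefactor $\lambda_+ = O(|\xi|^2)$, giving the $|\xi|^2 e^{-\theta|\xi|^2 t}$ contribution, while the second term's prefactor $\lambda_- \approx -\beta$ is $O(1)$ but multiplies the fast factor $e^{\lambda_- t} \leq e^{-\beta t/2}$, giving the $e^{-\beta t/2}$ contribution; summing the two explains the two-term bound for $\mathcal{G}_2$.

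\textbf{Main obstacle.}
The routine part is the Taylor expansion, but the step requiring genuine care is controlling the quotients uniformly in $|\xi| \leq R$ \emph{and} all $t > 0$ simultaneously. One must verify that the higher-order $O(|\xi|^4)$ remainders in the exponent do not spoil the clean rate $e^{-\theta|\xi|^2 t}$ over the whole time axis; this is handled precisely by choosing $R$ small so that $\lambda_+ \leq -\theta|\xi|^2$ holds as a true inequality for a fixed $\theta < \tfrac{\alpha K}{2\beta}$, rather than just asymptotically. A secondary subtlety is the $\mathcal{G}_2$ estimate, where one should not simply bound $|\lambda_+ e^{\lambda_+ t}|$ crudely but must retain the $|\xi|^2$ gain from the prefactor, since this quadratic weight is exactly what produces the extra order of decay for $\sigma$ (and hence $\tau$) that drives the sharper rate \eqref{opti2} later. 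Everything else reduces to the elementary inequality $|e^{a} - e^{b}| \leq |a-b|\,e^{\max(a,b)}$ and the uniform lower bound on $\lambda_+ - \lambda_-$, so no additional machinery is needed.
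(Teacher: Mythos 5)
Your proof is correct and follows essentially the same route as the paper's: the paper does not write out the computation (it defers to Proposition 2.3 and Lemma 4.5 of the cited work of Huang, Wang, Wen, and Zi, noting only a dimensional difference), but the intended argument is exactly your direct estimation of the explicit formulas using $\lambda_-\le-\beta/2$, $\lambda_+\le-\theta|\xi|^2$, and the lower bound $\lambda_+-\lambda_-\ge\beta/\sqrt{2}$ on the denominator. One small simplification: the Taylor expansion (and the need to shrink $\theta$ to absorb $O(|\xi|^4)$ remainders) is unnecessary, since the exact identity $\lambda_+=-\alpha K|\xi|^2\big/\big(\beta+\sqrt{\beta^2-2\alpha K|\xi|^2}\big)$ immediately gives $-\frac{\alpha K}{\beta}|\xi|^2\le\lambda_+\le-\frac{\alpha K}{2\beta}|\xi|^2$ for all $|\xi|\le R$, so one may take $\theta=\frac{\alpha K}{2\beta}$ outright; this is precisely the identity the paper exploits later in the proof of Lemma \ref{lemma_Greenfunction_10}.
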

	\begin{rem}
		The proof of Lemma \ref{lemma_Greenfunction_4} is similar to the proof of Proposition  $2.3$ and Lemma $4.5$ in \cite{Huang 2022} with only the difference of dimension.
	\end{rem}
	Consequently, we can obtain the upper bound of the low-frequency part of the solution satisfying \eqref{u_sigma_1}.
	\begin{lemma} \label{lemma_Greenfunction_7}
		Assume that $(u_0,\tau_0)\in L^1(\mathbb{R}^2)$, it holds for the solution to \eqref{u_sigma_1} that
		\begin{equation*} \label{lemma_Greenfunction_8}
			\begin{split}
				\left(\int_{|\xi|\leq R}
				|\xi|^{2k}|\hat{u}(t)|^2
				{\rm d}\xi\right)^\frac{1}{2}
				\le&\,C(1+t)^{-\frac12-\frac{k}{2}} + C\int_0^t	(1+t-s)^{-\frac12-\frac{k}{2}}(\|\hat{\mathcal{F}}_1 \|_{L^\infty} + \|\hat{\mathcal{F}}_2 \|_{L^\infty})
				{\rm d}s,\\
				\left(\int_{|\xi|\leq R}
				|\xi|^{2k}|\hat{\sigma}(t)|^2
				{\rm d}\xi\right)^\frac{1}{2}
				\le &\,C(1+t)^{-1-\frac{k}{2}} + C\int_0^t	(1+t-s)^{-1-\frac{k}{2}}(\|\hat{\mathcal{F}}_1 \|_{L^\infty} + \|\hat{\mathcal{F}}_2 \|_{L^\infty})
				{\rm d}s.
			\end{split}
		\end{equation*}
	\end{lemma}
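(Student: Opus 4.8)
The plan is to combine Duhamel's principle with the Green-function bounds of Lemma \ref{lemma_Greenfunction_4} and a dimension-sensitive estimate for weighted Gaussian integrals. First I would view \eqref{u_sigma_1} as the linear system \eqref{Greenfunction_1} driven by the source $(\mathcal{F}_1,\mathcal{F}_2)$. Since Lemma \ref{lemma_Greenfunction_1} provides the solution operator of \eqref{Greenfunction_2} explicitly as the matrix with entries $\mathcal{G}_3$, $K|\xi|\mathcal{G}_1$, $-\frac{\alpha}{2}|\xi|\mathcal{G}_1$, $\mathcal{G}_2$, Duhamel's formula in Fourier space yields
\begin{align*}
	\hat u(t) &= \mathcal{G}_3(t)\hat u_0 + K|\xi|\mathcal{G}_1(t)\hat\sigma_0 + \int_0^t\big(\mathcal{G}_3(t-s)\hat{\mathcal{F}}_1(s) + K|\xi|\mathcal{G}_1(t-s)\hat{\mathcal{F}}_2(s)\big)\,{\rm d}s,\\
	\hat\sigma(t) &= -\tfrac{\alpha}{2}|\xi|\mathcal{G}_1(t)\hat u_0 + \mathcal{G}_2(t)\hat\sigma_0 + \int_0^t\big(-\tfrac{\alpha}{2}|\xi|\mathcal{G}_1(t-s)\hat{\mathcal{F}}_1(s) + \mathcal{G}_2(t-s)\hat{\mathcal{F}}_2(s)\big)\,{\rm d}s.
\end{align*}

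The crucial ingredient is the elementary two-dimensional bound
\begin{equation*}
	\int_{|\xi|\le R}|\xi|^{2m}e^{-2\theta|\xi|^2 t}\,{\rm d}\xi \le C(1+t)^{-(m+1)},\qquad m\ge 0,
\end{equation*}
which I would obtain by passing to polar coordinates and substituting $r=|\xi|$, $w=r^2t$, so that $\int_0^R r^{2m+1}e^{-2\theta r^2 t}\,{\rm d}r = \frac{1}{2}t^{-(m+1)}\int_0^{R^2t}w^m e^{-2\theta w}\,{\rm d}w \le C t^{-(m+1)}$; for $t\le 1$ the left-hand side is trivially bounded by $\int_{|\xi|\le R}|\xi|^{2m}\,{\rm d}\xi$, so the $(1+t)$-form covers both regimes. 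It is precisely the exponent $m+1$ here (in place of $m+\frac32$ in three dimensions) that forces the slower two-dimensional rates.

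With this in hand I would estimate each term by bounding $|\hat{\mathcal{F}}_i(s)|$ and the data transforms by their $L^\infty$ norms, using $\|\hat f\|_{L^\infty}\le\|f\|_{L^1}$ and $|\hat\sigma_0|\le C|\hat\tau_0|$ (from the symbol of $\sigma$) to control $\|\hat u_0\|_{L^\infty},\|\hat\sigma_0\|_{L^\infty}$ by $\|(u_0,\tau_0)\|_{L^1}$. Inserting the bounds of Lemma \ref{lemma_Greenfunction_4}: the $\mathcal{G}_3\hat u_0$ term corresponds to $m=k$, giving the rate $(1+t)^{-\frac12-\frac{k}{2}}$ for $\hat u$, while the $K|\xi|\mathcal{G}_1\hat\sigma_0$ term carries an extra power of $|\xi|$, i.e. $m=k+1$, hence a faster rate absorbed into the former. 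For $\hat\sigma$, both $-\frac{\alpha}{2}|\xi|\mathcal{G}_1\hat u_0$ and the $|\xi|^2e^{-\theta|\xi|^2t}$ part of $\mathcal{G}_2\hat\sigma_0$ carry two extra powers of $|\xi|$ ($m=k+1$), yielding $(1+t)^{-1-\frac{k}{2}}$, and the $e^{-\beta t/2}$ part of $\mathcal{G}_2$ decays exponentially. For the Duhamel integrals I would apply Minkowski's integral inequality to move the weighted $L^2_\xi$ norm inside the $s$-integral and then reuse the same pointwise-in-$s$ estimates with $t$ replaced by $t-s$, which reproduces the kernels $(1+t-s)^{-\frac12-\frac{k}{2}}$ and $(1+t-s)^{-1-\frac{k}{2}}$ multiplying $\|\hat{\mathcal{F}}_1\|_{L^\infty}+\|\hat{\mathcal{F}}_2\|_{L^\infty}$.

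The main obstacle is bookkeeping rather than any single delicate estimate: one must track how the extra factors of $|\xi|$ attached to $\mathcal{G}_1$ and to the leading part of $\mathcal{G}_2$ raise the exponent $m$ in the weighted Gaussian integral, and then verify in each of the several terms that the resulting rate is at least as fast as the one claimed, so that every faster contribution can be absorbed into the stated decay. The only other point requiring care is the passage from the sharp power $t^{-(k+1)/2}$ to the regularized $(1+t)^{-\frac12-\frac{k}{2}}$ form near $t=0$, which the split $t\le 1$ versus $t\ge 1$ above handles uniformly and which also keeps the Duhamel kernels bounded near $s=t$.
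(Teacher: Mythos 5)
Your proposal is correct and follows essentially the same route as the paper's proof: Duhamel's formula in Fourier variables, the pointwise Green-function bounds of Lemma \ref{lemma_Greenfunction_4}, Minkowski's inequality to handle the source terms, and the two-dimensional weighted Gaussian integral estimate $\int_{|\xi|\le R}|\xi|^{2m}e^{-2\theta|\xi|^2 t}\,{\rm d}\xi \le C(1+t)^{-(m+1)}$, which the paper uses implicitly and you make explicit. The only slip is labeling the $|\xi|^2 e^{-\theta|\xi|^2 t}$ part of $\mathcal{G}_2\hat\sigma_0$ as $m=k+1$ when it in fact corresponds to $m=k+2$; since that term then decays even faster than the stated rate, the conclusion is unaffected.
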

	\begin{proof}
		From the Duhamel's principle, we have that
		\begin{align}
			\label{Greenfunction_13-1}	\hat{u}(t) =& \,\mathcal{G}_3 \hat{u}_0 + K|\xi|\mathcal{G}_1\hat{\sigma}_0 + \int_0^t\mathcal{G}_3(t-s)\hat{\mathcal{F}}_1 (s) + K|\xi|\mathcal{G}_1(t-s)\hat{\mathcal{F}}_2 (s)
			{\rm d}s,\\
			\label{Greenfunction_13-2}	\hat{\sigma}(t) =& -\frac{\alpha}{2}|\xi|\mathcal{G}_1 \hat{u}_0 + \mathcal{G}_2\hat{\sigma}_0 + \int_0^t-\frac{\alpha}{2}|\xi|\mathcal{G}_1(t-s)\hat{\mathcal{F}}_1 (s) + \mathcal{G}_2(t-s)\hat{\mathcal{F}}_2 (s){\rm d}s.
		\end{align}
		It follows from Lemma \ref{lemma_Greenfunction_4} and Minkowski's  inequality that
		\begin{align*}
			&\left(\int_{|\xi|\leq R}|\xi|^{2k}|\hat{u}(t)|^2{\rm d}\xi\right)^{\frac12}\\
			\le & ~C(\|\hat{u}_0\|_{L^\infty} + \|\hat{\tau}_0\|_{L^\infty})\left(\int_{|\xi|\leq R}|\xi|^{2k}e^{-2\theta|\xi|^2t}{\rm d}\xi\right)^{\frac12}\\
			&+ ~C\left(\int_{|\xi|\leq R}|\xi|^{2k}\Big|\int_0^t\mathcal{G}_3(t-s)\hat{\mathcal{F}}_1 (\xi,s) + K|\xi|\mathcal{G}_1(t-s)\hat{\mathcal{F}}_2 (\xi,s){\rm d}s\Big|^2{\rm d}\xi\right)^\frac12\\
			\le & ~C(1+t)^{-\frac12-\frac{k}{2}} + C\int_0^t\Big(\int_{|\xi|\leq R}|\xi|^{2k}e^{-2\theta|\xi|^2(t-s)}(|\hat{\mathcal{F}}_1 (\xi,s)|^2 + |\hat{\mathcal{F}}_2 (\xi,s)|^2){\rm d}\xi\Big)^\frac12{\rm d}s\\
			\le &~C(1+t)^{-\frac12-\frac{k}{2}} + C\int_0^t(1+t-s)^{-\frac12-\frac{k}{2}}(\|\hat{\mathcal{F}}_1 (\cdot,s)\|_{L^\infty} + \|\hat{\mathcal{F}}_2 (\cdot,s)\|_{L^\infty}){\rm d}s.
		\end{align*}
	
		By similar calculations, we obtain that
		\begin{align*}
			&\left(\int_{|\xi|\leq R}
			|\xi|^{2k}|\hat{\sigma}(t)|^2{\rm d}\xi\right)^\frac{1}{2}\\
			\le& ~C\|\hat{u}_0\|_{L^\infty}\left(\int_{|\xi|\leq R}|\xi|^{2k+2}e^{-2\theta|\xi|^2t}{\rm d}\xi\right)^{\frac12} + C\|\hat{\tau}_0\|_{L^\infty}\left(\int_{|\xi|\leq R}  |\xi|^{2k+4}e^{-2\theta|\xi|^2 t}+|\xi|^{2k}e^{-\beta t}{\rm d}\xi \right)^\frac12\\
			&+ ~C\left(\int_{|\xi|\leq R}|\xi|^{2k}\Big|\int_0^t-\frac{\alpha}{2}|\xi|\mathcal{G}_1(t-s)\hat{\mathcal{F}}_1 (\xi,s) + \mathcal{G}_2(t-s)\hat{\mathcal{F}}_2 (\xi,s){\rm d}s\Big|^2{\rm d}\xi\right)^\frac12\\
			\le& C(1+t)^{-1-\frac{k}{2}} + C\int_0^t	(1+t-s)^{-1-\frac{k}{2}}(\|\hat{\mathcal{F}}_1(\cdot,s)\|_{L^\infty} + \|\hat{\mathcal{F}}_2(\cdot,s) \|_{L^\infty})
			{\rm d}s.
		\end{align*}
	The proof of Lemma \ref{lemma_Greenfunction_7} is complete.
	\end{proof}
	Next, we consider the lower bound estimates of $\mathcal{G}_1(\xi,t)$ and $\mathcal{G}_3(\xi,t)$.
	\begin{lemma}\label{lemma_Greenfunction_10}
		Let  $R$ be the constant chosen in Lemma \ref{lemma_Greenfunction_4}. There exist three positive constants $\eta=\eta(\alpha,\beta,K)$, $C=C(\alpha,\beta,K)$ and  $t_1=t_1(\beta)$, such that
		\begin{equation}\label{Greenfunction_16}
			|\mathcal{G}_1(\xi,t)| \geq \frac{1}{C} e^{-\eta |\xi|^2 t},~~|\mathcal{G}_3(\xi,t)| \geq \frac{1}{C} e^{-\eta |\xi|^2 t}, \ {\rm for}\ {\rm all}\ |\xi| \leq R \ {\rm and}\ t\geq t_1.
		\end{equation}
	\end{lemma}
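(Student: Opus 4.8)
The statement concerns the two explicit Green kernels from Lemma \ref{lemma_Greenfunction_1}, so the plan is to argue directly from their closed forms rather than from the ODE system. First I would shrink the radius $R$ inherited from Lemma \ref{lemma_Greenfunction_4} (if necessary) so that $2\alpha K R^2\le\frac34\beta^2$; then for every $|\xi|\le R$ the discriminant $\beta^2-2\alpha K|\xi|^2$ is positive, both eigenvalues
\[
\lambda_\pm=\frac{-\beta\pm\sqrt{\beta^2-2\alpha K|\xi|^2}}{2}
\]
are real, negative and distinct, and $s:=\lambda_+-\lambda_-=\sqrt{\beta^2-2\alpha K|\xi|^2}$ obeys $\frac\beta2\le s\le\beta$. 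The decisive preliminary step is a sharp lower bound on the slow eigenvalue: rationalising the numerator gives $\lambda_+=-\frac{\alpha K|\xi|^2}{\beta+s}$, so that $-\eta|\xi|^2\le\lambda_+\le0$ with $\eta:=\frac{\alpha K}{\beta}$. This is exactly what will convert the exponentials into the claimed Gaussian-type factor $e^{-\eta|\xi|^2t}$.

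For $\mathcal{G}_1$ I would factor out the slowly decaying mode, writing
\[
\mathcal{G}_1(\xi,t)=\frac{e^{\lambda_+t}-e^{\lambda_-t}}{s}=\frac{e^{\lambda_+t}}{s}\bigl(1-e^{-st}\bigr),
\]
a product of strictly positive factors. Choosing $t_1=t_1(\beta)$ so that $e^{-st}\le e^{-\beta t/2}\le\frac12$ whenever $t\ge t_1$ (for instance $t_1=\frac{2\ln2}{\beta}$, which is legitimate precisely because $s\ge\frac\beta2$), the last factor is bounded below by $\frac12$; combined with $\frac1s\ge\frac1\beta$ and $e^{\lambda_+t}\ge e^{-\eta|\xi|^2t}$ this yields $\mathcal{G}_1(\xi,t)\ge\frac{1}{2\beta}e^{-\eta|\xi|^2t}$, the first inequality in \eqref{Greenfunction_16}.

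For $\mathcal{G}_3$ I would use $-\lambda_-=\frac{\beta+s}{2}$ and again factor out $e^{\lambda_+t}$ to get
\[
\mathcal{G}_3(\xi,t)=\frac{\lambda_+e^{\lambda_-t}-\lambda_-e^{\lambda_+t}}{s}=\frac{e^{\lambda_+t}}{2s}\Bigl[(\beta+s)-(\beta-s)e^{-st}\Bigr].
\]
Here the key observation is that $\beta-s\ge0$, so the subtracted term never exceeds $\beta-s$ and the bracket is bounded below by $(\beta+s)-(\beta-s)=2s>0$ for every $t\ge0$. Hence $\mathcal{G}_3(\xi,t)\ge e^{\lambda_+t}\ge e^{-\eta|\xi|^2t}$; in fact no largeness of $t$ is required for this kernel, and the same $\eta$ serves both estimates.

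I expect the only genuinely delicate point to be the bookkeeping that produces a clean quadratic exponent $\eta|\xi|^2$ with a constant independent of $\xi$ — this is exactly what the rationalisation $\lambda_+=-\frac{\alpha K|\xi|^2}{\beta+s}$ achieves — together with the requirement that $t_1$ depend on $\beta$ alone. The latter is the reason for insisting on the uniform lower bound $s\ge\frac\beta2$, so the mild obstacle is to coordinate the choice of $R$ from Lemma \ref{lemma_Greenfunction_4} with this bound; once that is arranged, both estimates in \eqref{Greenfunction_16} follow from the elementary inequalities above.
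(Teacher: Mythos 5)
Your proof is correct and follows essentially the same route as the paper: you rationalise $\lambda_+=-\frac{\alpha K|\xi|^2}{\beta+s}\ge-\eta|\xi|^2$ with $\eta=\frac{\alpha K}{\beta}$, factor $e^{\lambda_+t}$ out of both kernels, and bound the remaining factors using the uniform bounds on $s=\lambda_+-\lambda_-$, exactly as in the paper (which uses $\frac{\sqrt2}{2}\beta\le s\le\beta$ and $t_1=\frac{\sqrt2\ln2}{\beta}$ instead of your $\frac{\beta}{2}\le s\le\beta$ and $t_1=\frac{2\ln2}{\beta}$). Your observation that the $\mathcal{G}_3$ bound actually needs no largeness of $t$ is a small sharpening, but the argument is the same.
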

	\begin{proof}
		From Lemma \ref{lemma_Greenfunction_4}, there holds
		\begin{equation} \label{Greenfunction_17}
			\frac{\sqrt{2}}{2}\beta\leq \lambda_+ - \lambda_-=\sqrt{\beta^2-2\alpha\kappa|\xi|^2}\leq \beta,
		\end{equation} for all $|\xi|\leq R$, where $R$ is sufficiently small.
		
		Noticing that
		\begin{equation*}\label{Greenfunction_18}
			\lambda_+ =\frac{-\alpha K|\xi|^2}{\beta+\sqrt{\beta^2-2\alpha K|\xi|^2}} \geq - \frac{\alpha K}{\beta}|\xi|^2 =:-\eta|\xi|^2,
		\end{equation*}
		there exists a time $t_1 = \frac{\sqrt{2}\ln 2}{\beta},$ such that
		\begin{equation} \label{Greenfunction_19}
			|e^{\lambda_+t}-e^{\lambda_-t}| = \left|e^{\lambda_+ t}\big(1 - e^ {-(\lambda_+-\lambda_-)t}\big)\right|   \geq  \frac{1}{2}e^{-\eta |\xi|^2 t},
		\end{equation}
		and
		\begin{equation} \label{Greenfunction_20}
			|\lambda_+e^{\lambda_-t}-\lambda_-e^{\lambda_+t}| = |e^{\lambda_+ t}\big(\lambda_+ e^ {-(\lambda_+-\lambda_-)t} - \lambda_-\big)| \geq (\lambda_+-\lambda_-)e^{-\eta |\xi|^2t},
		\end{equation}  for any $t\geq t_1$.
	
		Then (\ref{lemma_Greenfunction_3+1}) combined with (\ref{Greenfunction_17}), (\ref{Greenfunction_19}), and (\ref{Greenfunction_20}) yields (\ref{Greenfunction_16}). Hence we finish the proof of Lemma \ref{lemma_Greenfunction_10}.
	\end{proof}

	With Lemmas \ref{lemma_Greenfunction_4} and \ref{lemma_Greenfunction_10}, the lower bounds of the linear part of the solution can be estimated as follows.
	\begin{lemma}\label{lemma_Greenfunction_11}
		Under the assumptions of Lemma \ref{lemma_Greenfunction_10}, and in addition that $\Big|\int_{\mathbb{R}^2}u_0(x) {\rm d}x\Big| = c_2>0,$  there exists a positive generic constant $C = C(\alpha,\beta,K,c_2,\|\tau_0\|_{L^1})$, such that
		\begin{equation}\label{lemma_Greenfunction_12}
			\begin{split}
				\||\xi|^k\left(\mathcal{G}_3 \hat{u}_0 + K|\xi|\mathcal{G}_1\hat{\sigma}_0\right)\|_{L^2} \geq  \frac{1}{C}(1 + t)^{-\frac12-\frac{k}{2}},
			\end{split}
		\end{equation}
		\begin{equation}\label{lemma_Greenfunction_13}
			\,\,\,\,\,\||\xi|^k\left(-\frac{\alpha}{2}|\xi|\mathcal{G}_1 \hat{u}_0 + \mathcal{G}_2\hat{\sigma}_0\right)\|_{L^2} \geq \frac{1}{C}(1 + t)^{-1-\frac{k}{2}},
		\end{equation}for all $t\geq t_1$ and $k=0, 1, 2, 3 $.
	\end{lemma}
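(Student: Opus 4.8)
The plan is to bound each $L^2$ norm from below by discarding the high frequencies and integrating only over the low-frequency ball, where the Green-function estimates of Lemmas \ref{lemma_Greenfunction_4} and \ref{lemma_Greenfunction_10} are available, and then to isolate in each of the two expressions the single summand that carries the slowest decay. Since $u_0\in L^1$, its Fourier transform $\hat u_0$ is continuous with $|\hat u_0(0)|=\big|\int_{\mathbb R^2}u_0\,\mathrm dx\big|=c_2>0$, so there is a radius $r_0\in(0,R]$ such that $|\hat u_0(\xi)|\ge c_2/2$ for $|\xi|\le r_0$. I will also use $\|\hat\sigma_0\|_{L^\infty}\le C\|\hat\tau_0\|_{L^\infty}\le C\|\tau_0\|_{L^1}$, which follows from the boundedness of the Fourier multiplier defining $\sigma$ in \eqref{sigma}, together with $|\hat\tau_0|\le\|\tau_0\|_{L^1}$.

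For \eqref{lemma_Greenfunction_12} the leading term is $\mathcal G_3\hat u_0$. Using the lower bound $|\mathcal G_3|\ge \frac1Ce^{-\eta|\xi|^2t}$ of Lemma \ref{lemma_Greenfunction_10} together with $|\hat u_0|\ge c_2/2$ on $B_{r_0}$, a polar-coordinate computation gives, for $t\ge t_1$,
\[
\||\xi|^k\mathcal G_3\hat u_0\|_{L^2(B_{r_0})}^2\ \ge\ \frac{c_2^2}{4C^2}\int_{|\xi|\le r_0}|\xi|^{2k}e^{-2\eta|\xi|^2t}\,\mathrm d\xi\ \ge\ \frac1C(1+t)^{-1-k},
\]
so this term is bounded below by $\frac1C(1+t)^{-\frac12-\frac k2}$. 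The remaining term $K|\xi|\mathcal G_1\hat\sigma_0$ carries an extra factor $|\xi|$; by $|\mathcal G_1|\le Ce^{-\theta|\xi|^2t}$ (Lemma \ref{lemma_Greenfunction_4}) and $\|\hat\sigma_0\|_{L^\infty}\le C\|\tau_0\|_{L^1}$,
\[
\||\xi|^{k+1}K\mathcal G_1\hat\sigma_0\|_{L^2(B_{r_0})}\ \le\ C\|\tau_0\|_{L^1}\Big(\int_{|\xi|\le r_0}|\xi|^{2k+2}e^{-2\theta|\xi|^2t}\,\mathrm d\xi\Big)^{\frac12}\ \le\ C(1+t)^{-1-\frac k2},
\]
which is one factor of $(1+t)^{-1/2}$ faster. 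Restricting the full $L^2$ norm to $B_{r_0}$ and applying the reverse triangle inequality then yields \eqref{lemma_Greenfunction_12} once $t$ is large enough that this remainder is at most half of the leading term.

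The estimate \eqref{lemma_Greenfunction_13} is handled in the same spirit, now with leading term $-\frac\alpha2|\xi|\mathcal G_1\hat u_0$: the lower bound $|\mathcal G_1|\ge\frac1Ce^{-\eta|\xi|^2t}$ together with $|\hat u_0|\ge c_2/2$ on $B_{r_0}$ gives $\||\xi|^{k+1}\mathcal G_1\hat u_0\|_{L^2(B_{r_0})}\ge\frac1C(1+t)^{-1-\frac k2}$. The remainder $\mathcal G_2\hat\sigma_0$ is controlled by the bound $|\mathcal G_2|\le C(|\xi|^2e^{-\theta|\xi|^2t}+e^{-\beta t/2})$: the first piece carries two extra powers of $|\xi|$ and so contributes at rate $(1+t)^{-\frac32-\frac k2}$, while the second contributes at most $Ce^{-\beta t/2}$ and is exponentially small. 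Again the reverse triangle inequality on $B_{r_0}$ gives \eqref{lemma_Greenfunction_13} for $t$ sufficiently large.

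The main obstacle is the comparison of each leading term with its remainder. The crude lower bound on the leading Green function and the upper bound on the remainder Green function decay at a priori different Gaussian rates ($e^{-\eta|\xi|^2t}$ against $e^{-\theta|\xi|^2t}$ with $\theta$ smaller than $\eta$), so a pointwise comparison on a fixed ball is not uniform in $t$; the resolution is to compare $L^2$ masses, where the extra factor(s) of $|\xi|$ in each remainder convert, after integration over the two-dimensional frequency variable, into extra powers of $(1+t)^{-1/2}$ and thereby render the remainder subdominant. The genuinely delicate point is the $e^{-\beta t/2}$ contribution of $\mathcal G_2$ in \eqref{lemma_Greenfunction_13}: although exponentially decaying in $t$, its Fourier amplitude near $\xi=0$ does not vanish with $|\xi|$, so its $L^2$ mass competes with the algebraically small leading term at moderate times. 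This is precisely what forces the conclusion to hold only for $t\ge t_1$, with $t_1$ taken (as in Lemma \ref{lemma_Greenfunction_10}) large enough in terms of $\beta$ that $e^{-\beta t/2}$ has become negligible against $(1+t)^{-1-\frac k2}$.
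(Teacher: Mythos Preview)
Your proposal is correct and follows essentially the same approach as the paper: restrict to a small frequency ball where $|\hat u_0|\ge c_2/2$, apply the reverse triangle inequality with $\mathcal G_3\hat u_0$ (respectively $-\frac{\alpha}{2}|\xi|\mathcal G_1\hat u_0$) as the leading term, bound the leading term from below via Lemma~\ref{lemma_Greenfunction_10} and the remainder from above via Lemma~\ref{lemma_Greenfunction_4}, and observe that each remainder carries at least one extra power of $(1+t)^{-1/2}$. One minor remark: the restriction $t\ge t_1$ is needed already for the lower bounds on $\mathcal G_1,\mathcal G_3$ in Lemma~\ref{lemma_Greenfunction_10} to hold, not only to make the $e^{-\beta t/2}$ piece of $\mathcal G_2$ negligible; but this does not affect the validity of your argument.
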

	\begin{proof}
		First of all, since $u_0\in L^1(\mathbb{R}^2),$ then $\hat{u_0}\in C(\mathbb{R}^2).$ There exsits a constant $R'>0$, such that
		\begin{equation*}
			|\hat{u_0}(\xi)|\ge \frac{c_2}{2},\text{ for all } 0\le|\xi|\leq R'.
		\end{equation*}	
		For simplicity, we assume $R'\le R$,
		then we have
		\begin{equation}\label{Greenfunction_21}
			\begin{split}
				&\||\xi|^k\left(\mathcal{G}_3 \hat{u}_0 + K|\xi|\mathcal{G}_1\hat{\sigma}_0\right)\|_{L^2}\\
				=&
				\left\||\xi|^k\mathcal{G}_3(\xi,t)\hat{u}_0(\xi) +  	K|\xi|^{k+1}\mathcal{G}_1(\xi,t)\hat{\sigma}_0(\xi)\right\|_{L^2}\\
				\geq & \left(\int_{|\xi|\leq R'}|\xi|^{2k}\big|\mathcal{G}_3(\xi,t)\hat{u}_0(\xi) \big|^2 {\rm d}\xi\right)^\frac12 -\left(\int_{|\xi|\leq R'} K^2|\xi|^{2k+2}|\mathcal{G}_1(\xi,t)\hat{\sigma}_0(\xi)|^2 {\rm d}\xi\right)^\frac12
				\\ =:&\,
				K_1-K_2.
			\end{split}
		\end{equation}
		From Lemma \ref{lemma_Greenfunction_10}, we have that
		\begin{equation}\label{Greenfunction_22}
			\begin{split}
				K_1 \geq \frac{1}{C}\left(\int_{|\xi|\leq R'}|\xi|^{2k}e^{-2\eta |\xi|^2t}{\rm d} \xi\right)^\frac12
				\ge \frac{1}{C}(1+t)^{-\frac12-\frac{k}{2}},
			\end{split}
		\end{equation}	for all $t\geq t_1$.
		On the other hand,  Lemma \ref{lemma_Greenfunction_4} yields
		\begin{equation}\label{Greenfunction_23}
			\begin{split}
				K_2 \leq & ~C\|\hat{\sigma}_0\|_{L^\infty}\left( \int_{|\xi|\leq R'}   |\xi|^{2k+2} e^{-2\theta|\xi|^2t} {\rm d} \xi\right)^\frac12
				\le
				C(1+t)^{-1-\frac{k}{2}}.
			\end{split}
		\end{equation}
		(\ref{Greenfunction_21}), (\ref{Greenfunction_22}), and (\ref{Greenfunction_23}) yield (\ref{lemma_Greenfunction_12}) for all $t\ge t_1$.
		
		Next, notice that
		\begin{equation}\label{Greenfunction_24}
			\begin{split}
				&\||\xi|^k\left(-\frac{\alpha}{2}|\xi|\mathcal{G}_1 \hat{u}_0 + \mathcal{G}_2\hat{\sigma}_0\right)\|_{L^2}
				\\=  & \left\|-\frac{\alpha}{2}|\xi|^{k+1}\mathcal{G}_1(\xi,t)\hat{u}_0(\xi) +  |\xi|^k \mathcal{G}_2(\xi,t)\hat{\sigma}_0(\xi)\right\|_{L^2}\\
				\ge&\frac{\alpha}{2}\left(\int_{|\xi|\leq R'}|\xi|^{2k+2}|\mathcal{G}_1(\xi,t)|^2|\hat{u}_0(\xi)|^2 {\rm d}\xi\right)^\frac12
				- \left(\int_{|\xi|\leq R'}|\xi|^{2k}|\mathcal{G}_2(\xi,t)|^2|\hat{\sigma}_0(\xi)|^2 {\rm d}\xi\right)^\frac12\\
				=:&\, K_3 - K_4.
			\end{split}
		\end{equation}
		Similar to the analysis of $K_1$ and $K_2$, we have
		\begin{equation}\label{Greenfunction_25}
			K_3 \ge \frac{1}{C}(1 + t)^{-1-\frac{k}{2}},
		\end{equation}
		and
		\begin{equation}\label{Greenfunction_26}
			\begin{split}
				K_4 \le& \|\hat{\sigma}_0\|_{L^\infty} \left(\int_{|\xi|\leq R'} |\xi|^{2k}|\mathcal{G}_2(\xi,t)|^2{\rm d} \xi\right)^\frac12
				\\ \le  & C\left(\int_{|\xi|\leq R'}  |\xi|^{4+2k}e^{-2\theta|\xi|^2 t}+|\xi|^{2k}e^{-\beta t}{\rm d}\xi \right)^\frac12\\
				\le  & \,C(1 + t)^{-\frac{3}{2}-\frac{k}{2}},
			\end{split}
		\end{equation} for all $t\geq t_1$.

		It follows from \eqref{Greenfunction_24}, (\ref{Greenfunction_25}), and \eqref{Greenfunction_26} that  \eqref{lemma_Greenfunction_13} holds for all $t\ge t_1$. Therefore letting $t\ge t_1$, we finish the proof of Lemma \ref{lemma_Greenfunction_11}.
	\end{proof}
	
	\subsection{Upper time-decay estimates}To begin with, we define that
	\begin{equation*}
		\begin{split}
			\mathcal{H}_1(t) & :=  \alpha\| u\|_{H^1}^2 + K\| \tau\|_{H^1}^2 + \eta_1\langle\Lambda u,\sigma\rangle = O(\|(u,\tau)\|_{H^1}),\\
			\mathcal{H}_2(t) & :=  \alpha\|\nabla  u\|_{H^1}^2 + K\|\nabla \tau\|_{H^1}^2 + \eta_2\langle\Lambda^2 u,\Lambda\sigma\rangle = O(\|\nabla(u,\tau)\|_{H^1}),\\
			\mathcal{H}_3(t) & := \alpha\|\nabla^2  u\|_{H^1}^2 + K\|\nabla^2 \tau\|_{H^1}^2 + \eta_3\langle\Lambda^3 u,\Lambda^2\sigma\rangle = O(\|\nabla^2(u,\tau)\|_{H^1}).
		\end{split}
	\end{equation*}
	\begin{lemma}\label{lemma_upper_decay}
		Under the assumptions of Theorem \ref{thm_OB_d_decay}, we have
		\begin{equation*}\label{utauH1-1}
			\| u(t)\|_{H^1}^2 + \| \tau(t)\|_{H^1}^2\le C(1+t)^{-\frac{1}{2}},
		\end{equation*} for all $t>0.$
	\end{lemma}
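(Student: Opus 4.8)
The plan is to combine the $H^1$ energy balance with a Fourier splitting argument, and to close the resulting differential inequality by feeding in the low-frequency bound of Lemma \ref{lemma_Greenfunction_7} together with a Dong-type observation on the decay of $\|\nabla u\|_{L^2}$. First I would record the basic balance: taking $\mu=0$ in Lemma \ref{lemma_regularity_1} gives
\begin{equation*}
	\udt \mathcal{H}_1(t) + \frac{\beta K}{2}\|\tau\|_{H^1}^2 + \frac{\eta_1\alpha}{4}\|\nabla u\|_{L^2}^2 \leq 0,
\end{equation*}
with $\mathcal{H}_1(t)\approx \|(u,\tau)(t)\|_{H^1}^2$. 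Hence $\mathcal{H}_1$ is non-increasing and $\int_0^\infty(\|\nabla u\|_{L^2}^2+\|\tau\|_{H^1}^2)\,\ud s\leq C\epsilon_0^2$. The essential difficulty is that the dissipation controls $\|\nabla u\|_{L^2}^2$ and $\|\tau\|_{H^1}^2$ but not $\|u\|_{L^2}^2$, so no decay follows from the energy estimate alone.

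Next I would run the Fourier splitting. With the time-dependent radius $g(t)^2=C_*/(1+t)$ one has, for $|\xi|>g(t)$, the elementary bound $\|\nabla u\|_{L^2}^2\geq g^2\big(\|u\|_{L^2}^2-\int_{|\xi|\leq g}|\hat u|^2\,\ud\xi\big)$, while for $t$ large $g(t)\leq1$ makes the remaining dissipative terms dominate $g^2$ times their own energy. Inserting this into the balance and using the equivalence $\mathcal{H}_1\approx\|u\|_{L^2}^2+\|\nabla u\|_{L^2}^2+\|\tau\|_{H^1}^2$ yields
\begin{equation*}
	\udt \mathcal{H}_1 + \frac{\nu_0}{1+t}\mathcal{H}_1 \leq \frac{C}{1+t}\int_{|\xi|\leq g(t)}|\hat u|^2\,\ud\xi,
\end{equation*}
where $\nu_0=c_5C_*$ can be made larger than $1$ by choosing $C_*$ large. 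Multiplying by the integrating factor $(1+t)^{\nu_0}$ and integrating reduces everything to estimating the low-frequency integral.

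For that integral I would use $\int_{|\xi|\leq g}|\hat u|^2\leq\int_{|\xi|\leq R}|\hat u|^2$ and apply Lemma \ref{lemma_Greenfunction_7} with $k=0$, where the nonlinear forcing is controlled by $\|\hat{\mathcal{F}}_1\|_{L^\infty}+\|\hat{\mathcal{F}}_2\|_{L^\infty}\leq C\|u\|_{L^2}(\|\nabla u\|_{L^2}+\|\nabla\tau\|_{L^2})$; this uses $\|fg\|_{L^1}\leq\|f\|_{L^2}\|g\|_{L^2}$ and the fact that the Fourier multipliers defining $\mathcal{F}_1$ and $\mathcal{F}_2=-\Lambda^{-1}\mathbb{P}\udiv(u\cdot\nabla\tau)$ have bounded symbols. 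The main obstacle appears precisely here, and is the reason for invoking Dong's argument: a direct Cauchy--Schwarz estimate of the Duhamel integral $\int_0^t(1+t-s)^{-\frac12}\|u(s)\|_{L^2}(\|\nabla u(s)\|_{L^2}+\|\nabla\tau(s)\|_{L^2})\,\ud s$, using only time-integrability of the dissipation, produces a borderline logarithmic factor in two dimensions and even leaves a non-decaying constant, so the bootstrap at rate $(1+t)^{-\frac12}$ does not close.

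To remove this I would first establish the qualitative decay $(1+t)^{\frac12}\|\nabla u(t)\|_{L^2}\to0$ and $(1+t)^{\frac12}\|\nabla\tau(t)\|_{L^2}\to0$ as $t\to\infty$: combining the homogeneous parts of Lemmas \ref{lemma_regularity_1} and \ref{lemma_regularity_2}, the quantity $\mathcal{H}_2(t)\approx\|\nabla(u,\tau)(t)\|_{H^1}^2$ is non-increasing, while $\int_0^\infty\mathcal{H}_2(s)\,\ud s<\infty$ by the integrability of the full dissipation in Lemma \ref{lemma_regularity_3}; consequently $\tfrac{t}{2}\|\nabla u(t)\|_{L^2}^2\leq C\int_{t/2}^t\mathcal{H}_2(s)\,\ud s\to0$, and likewise for $\nabla\tau$. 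Then, in a bootstrap where I assume $\mathcal{H}_1(s)\leq\Lambda(1+s)^{-\frac12}$ (so $\|u(s)\|_{L^2}\leq C\Lambda^{\frac12}(1+s)^{-\frac14}$), the observation upgrades the forcing to $\|\hat{\mathcal{F}}_1\|_{L^\infty}+\|\hat{\mathcal{F}}_2\|_{L^\infty}\leq C\Lambda^{\frac12}\epsilon(s)(1+s)^{-\frac34}$ with $\epsilon(s)\to0$. Splitting the Duhamel integral at $t/2$ now gives $\int_0^t(1+t-s)^{-\frac12}(\cdots)\,\ud s\leq C\Lambda^{\frac12}\bar\epsilon\,(1+t)^{-\frac14}$, hence $\int_{|\xi|\leq g}|\hat u|^2\leq C(1+t)^{-1}+C\Lambda\bar\epsilon^2(1+t)^{-\frac12}$; after the integrating-factor computation this returns $\mathcal{H}_1(t)\leq C(1+t)^{-\frac12}(1+\Lambda\bar\epsilon^2)$. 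Carrying the bootstrap out for $t\geq T_0$ with $T_0$ so large that $\epsilon(s)$ is small there (and controlling the bounded contribution on $[0,T_0]$ by the uniform estimate of Theorem \ref{wellposedness}), the factor $\Lambda\bar\epsilon^2$ is absorbed and the rate closes, yielding $\|u(t)\|_{H^1}^2+\|\tau(t)\|_{H^1}^2\leq C(1+t)^{-\frac12}$ for all $t>0$.
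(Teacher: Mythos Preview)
Your strategy is essentially the paper's: the $H^1$ energy balance, a frequency splitting, the Duhamel bound of Lemma \ref{lemma_Greenfunction_7} for the low-frequency remainder, the Dong-type qualitative decay $(1+t)^{1/2}\|\nabla(u,\tau)(t)\|_{L^2}\to 0$, and a bootstrap on $\sup_s(1+s)^{1/2}\mathcal{H}_1(s)$. The only structural difference is that the paper splits at the \emph{fixed} radius $R_1=\min\{1,R\}$ in this lemma, obtaining the constant-damping inequality $\udt\mathcal{H}_1+c\,\mathcal{H}_1\le C\int_{|\xi|\le R_1}|\hat u|^2\,\ud\xi$ and then applying Gronwall with an exponential kernel; the time-dependent Schonbek radius is reserved for the sharper rate in Lemma \ref{lemma_upper_decay+1}. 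Your time-dependent version also closes provided $\nu_0>1$, so this is a harmless variation.

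One point needs correction. The functional $\mathcal{H}_2(t)=\alpha\|\nabla u\|_{H^1}^2+K\|\nabla\tau\|_{H^1}^2+\eta_2\langle\Lambda^2 u,\Lambda\sigma\rangle$ is \emph{not} known to be monotone: in \eqref{new_H2_L2_1} the right-hand side carries $\|\nabla u\|_{L^2}^2$, which the dissipation there (only $\|\nabla\tau\|_{H^1}^2$ and $\|\Lambda^2 u\|_{L^2}^2$) cannot absorb. The paper obtains the Dong observation more simply from \eqref{est_first} with $\mu=0$, which gives $\alpha\|\nabla u\|_{L^2}^2+K\|\nabla\tau\|_{L^2}^2$ itself non-increasing and, together with its time-integrability from Theorem \ref{wellposedness}, yields $\tfrac{t}{2}(\alpha\|\nabla u(t)\|_{L^2}^2+K\|\nabla\tau(t)\|_{L^2}^2)\le\int_{t/2}^t(\cdots)\,\ud s\to0$. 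Replace your appeal to $\mathcal{H}_2$ by this and the argument goes through unchanged.
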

	\begin{proof}
		Recalling from \eqref{est_first} ($\mu=0$) that
		\begin{equation*}
			\begin{split}
				\udt (\alpha\|\nabla u(t)\|_{L^2}^2 + K\|\nabla\tau(t)\|_{L^2}^2) \leq 0,
			\end{split}
		\end{equation*}
		then we have
		\begin{equation}\label{nau}
			\begin{split}
				\alpha\|\nabla u(t)\|_{L^2}^2 + K\|\nabla\tau(t)\|_{L^2}^2\le \alpha\|\nabla u(s)\|_{L^2}^2 + K\|\nabla\tau(s)\|_{L^2}^2,
			\end{split}
		\end{equation}for $t\ge s\ge 0$.
		
		By virtue of \eqref{uniform_estimates} with $\mu=0$, there holds
		\begin{equation*}
			\int_0^{+\infty} (\|\nabla u(s)\|_{H^2}^2 + \|\tau(s)\|_{H^3}^2){\rm d}s \leq C.
		\end{equation*}
		This combined with (\ref{nau}) yields
		\begin{equation*}\label{new_H1_L2_7}
			\begin{split}
				\frac{t}{2}\alpha\|\nabla u(t)\|_{L^2}^2 + \frac{t}{2}K\|\nabla\tau(t)\|_{L^2}^2\le \int_{\frac{t}{2}}^t (\alpha\|\nabla u(s)\|_{L^2}^2 + K\|\nabla\tau(s)\|_{L^2}^2){\rm d}s\longrightarrow 0\,\,\,\,\text{as}\,\,\,\,t\rightarrow+\infty.
			\end{split}
		\end{equation*}
		Namely, we have
		\begin{equation}\label{new_H1_L2_19}
			\begin{split}
				\varphi(t):=(1+t)^{\frac12}\|\nabla u(t)\|_{L^2}\longrightarrow 0 \,\,\,\,\text{and}\,\,\,\, \psi(t):=(1+t)^{\frac12}\|\nabla \tau(t)\|_{L^2}\longrightarrow 0 \,\,\,\,\text{as}\,\,\,\,t\rightarrow\infty.
			\end{split}
		\end{equation}
	
		Next, from \eqref{est_H1} ($\mu=0$), we have
		\begin{equation}\label{new_H1_L2}
			\begin{split}
				\udt \mathcal{H}_1(t) +  \frac{\beta K}{2}\|\tau\|_{H^1}^2 +\frac{\eta_1\alpha}{4}\|\Lambda u\|_{L^2}^2
				\leq 0.
			\end{split}
		\end{equation}
		Noticing that
		\begin{equation*}\label{notice}
			\begin{split}
				\|\Lambda u\|_{L^2}^2 = \|\nabla u\|_{L^2}^2 &=  \frac12 \|\nabla u\|_{L^2}^2 + \frac12 \int_{|\xi|\ge R_{1}}
				|\xi|^2|\hat{u}|^2	{\rm d}\xi + \frac12 \int_{|\xi|\le R_{1}}
				|\xi|^2|\hat{u}|^2	{\rm d}\xi\\
				&\ge \frac12 \|\nabla u\|_{L^2}^2 + \frac12 R_{1}^{2}\int_{|\xi|\ge R_{1}}
				|\hat{u}|^2	{\rm d}\xi,
			\end{split}
		\end{equation*}
		where $R_{1}:=\min\{1, R\}$. Without loss of generality, we assume that  $\frac{\eta_1}{8}\le \frac{\beta}{2}$, then \eqref{new_H1_L2} can be rewritten as
		\begin{equation}\label{dtH1}
			\begin{split}
				\udt \mathcal{H}_1(t) + \frac{\eta_1 R_{1}^{2}}{16}(2\alpha\| u\|_{H^1}^2 + 2K\| \tau\|_{H^1}^2)\le \frac{\eta_1\alpha}{8}\int_{|\xi|\le R_{1}}
				|\hat{u}|^2	{\rm d}\xi.
			\end{split}
		\end{equation}
		Substituting \eqref{bu_9} into (\ref{dtH1}), we obtain
		\begin{equation} \label{new1}
			\begin{split}
				\udt \mathcal{H}_1(t) + \frac{\eta_1 R_{1}^{2}}{16}\mathcal{H}_1(t)
				\le \frac{\eta_1\alpha}{8}\int_{|\xi|\le R_{1}}
				|\hat{u}|^2	{\rm d}\xi.
			\end{split}
		\end{equation}
		From Lemma \ref{lemma_Greenfunction_7} ($k=0$), we have that
		\begin{equation} \label{new2}
			\begin{split}
				\left(\int_{|\xi|\leq R_{1}}
				|\hat{u}|^2
				{\rm d}\xi\right)^\frac{1}{2}&\le\left(\int_{|\xi|\leq R}
				|\hat{u}|^2
				{\rm d}\xi\right)^\frac{1}{2}\\
				&\le C(1+t)^{-\frac12} + C\int_0^t(1+t-s)^{-\frac12}\|u\|_{L^{2}}(\|\nabla u\|_{L^2} + \|\nabla\tau\|_{L^{2}})
				{\rm d}s.
			\end{split}
		\end{equation}
	
		Then, (\ref{new1}) and (\ref{new2}) yield
		\begin{equation}\label{new_H1_L2_6}
			\begin{split}
				\udt \mathcal{H}_1(t) &+ \frac{\eta_1 R_{1}^{2}}{16}\mathcal{H}_1(t)
				\le C (1+t)^{-1}\\ &+ C \left(\int_0^t(1+t-s)^{-\frac12}\|u\|_{L^{2}}(\|\nabla u\|_{L^2} + \|\nabla\tau\|_{L^{2}})
				{\rm d}s\right)^{2}.
			\end{split}
		\end{equation}
		Substituting \eqref{new_H1_L2_19} into \eqref{new_H1_L2_6}, we obtain
		\begin{equation}\label{new_H1_L2_20}
			\begin{split}
				\udt \mathcal{H}_1(t) &+ \frac{\eta_1 R_{1}^{2}}{16}\mathcal{H}_1(t)
				\le C (1+t)^{-1}\\ &+ C \left(\int_0^t(1+t-s)^{-\frac12}(1+s)^{-\frac12}\|u(s)\|_{L^{2}}(\varphi(s) + \psi(s))
				{\rm d}s\right)^{2}.
			\end{split}
		\end{equation}
		Define that
		\begin{equation}\label{M}
			\mathcal{M}(t): = \sup_{0\leq s \leq t}(1+s)^{\frac{1}{2}}\mathcal{H}_1(s).
		\end{equation}
		Notice that $\mathcal{M}(t)$ is non-decreasing and for all $t\ge 0$,
		$$\mathcal{H}_1(t) \le (1+t)^{-\frac{1}{2}}\mathcal{M}(t)\,\,\,\,\, \text{and}\,\,\,\,\, \|u(t)\|_{L^{2}} \le C(1+t)^{-\frac{1}{4}}\mathcal{M}(t)^{\frac12}.$$	
		Then, \eqref{new_H1_L2_20} and (\ref{M}) immediately yield
		\begin{equation}\label{new_H1_L2_21}
			\begin{split}
				\udt \mathcal{H}_1(t) &+ \frac{\eta_1 R_{1}^{2}}{16}\mathcal{H}_1(t)
				\le C (1+t)^{-1}\\ &+ C \mathcal{M}(t)\left(\int_0^t(1+t-s)^{-\frac12}(1+s)^{-\frac34}(\varphi(s) + \psi(s))
				{\rm d}s\right)^{2}.
			\end{split}
		\end{equation}
	
		Motivated by Dong and Chen (\cite{Dong 2006}),  we define that
		\begin{equation*}
			\mathcal{J}(t):= (1+t)^{\frac14}\int_0^t(1+t-s)^{-\frac12}(1+s)^{-\frac34}(\varphi(s) + \psi(s)){\rm d}s.
		\end{equation*}
		Owing to \eqref{new_H1_L2_19}, for any given small constant $\epsilon$, there exists a $T_\epsilon> 0$, such that
		$$\varphi(T_\epsilon) + \psi(T_\epsilon)\le\epsilon.$$
		Then, we have
		\begin{equation*}\label{new_H1_L2_22}
			\begin{split}
				\mathcal{J}(t) = &\,(1+t)^{\frac14}\int_0^{T_\epsilon}(1+t-s)^{-\frac12}(1+s)^{-\frac34}(\varphi(s) + \psi(s)){\rm d}s \\&+ (1+t)^{\frac14}\int_{T_\epsilon}^t(1+t-s)^{-\frac12}(1+s)^{-\frac34}(\varphi(s) + \psi(s)){\rm d}s\\
				\le&\, (1+t)^{\frac14}\Big(C(T_\epsilon)\int_0^{T_\epsilon}(1+t-s)^{-\frac12}(1+s)^{-\frac34}{\rm d}s + \epsilon\int_{0}^t(1+t-s)^{-\frac12}(1+s)^{-\frac34}{\rm d}s\Big)\\
				\le& \,C(T_\epsilon)(1+t)^{-\frac14} +C\epsilon,
			\end{split}
		\end{equation*}
		for $t\ge 2T_\epsilon.$ Letting $t\rightarrow+\infty$, and using the fact that $\epsilon$ is arbitrarily small, we have
		\begin{equation}\label{new_H1_L2_23}
			\begin{split}
				\mathcal{J}(t)\longrightarrow 0 \,\,\,\,as\,\,\,\,t\rightarrow +\infty.
			\end{split}
		\end{equation}
		Back to \eqref{new_H1_L2_21}, by Gronwall's inequality and using \eqref{new_H1_L2_23}, we have
		\begin{equation*}\label{new_H1_L2_24}
			\begin{split}
				\mathcal{H}_1(t)&\le e^{-Ct}\mathcal{H}_1(0)+ C\int_0^t e^{-C(t-s)}\left((1+s)^{-1} + \mathcal{M}(s)(1+s)^{-\frac12} \mathcal{J}(s)^2 \right) {\rm d}s\\
				&\le C (1+t)^{-1} + C(T_1)\int_0^{T_1} e^{-C(t-s)}(1+s)^{-\frac12} {\rm d}s + C\int_{T_1}^t e^{-C(t-s)}\mathcal{M}(s)(1+s)^{-\frac12} \mathcal{J}(s)^2 {\rm d}s\\
				&\le C(T_1) (1+t)^{-\frac{1}{2}} + \frac12(1+t)^{-\frac12}\sup_{0\leq s \leq t}(1+s)^{\frac{1}{2}}\mathcal{H}_1(s),
			\end{split}
		\end{equation*} for any $t\ge T_2:=2T_1$. Namely, for $t\ge T_2,$ there holds
		\begin{equation}\label{new_H1_L2_26}
			\begin{split}
				(1+t)^{\frac12}\mathcal{H}_1(t)\le C(T_1) + \frac12\sup_{0\leq s \leq t}(1+s)^{\frac{1}{2}}\mathcal{H}_1(s).
			\end{split}
		\end{equation}
		In \eqref{new_H1_L2_26}, taking supremum with respect to $t$ from $T_2$ to $t$, we get
		\begin{equation*}\label{new_H1_L2_27}
			\begin{split}
				\frac12\sup_{T_2\leq s \leq t}(1+s)^{\frac{1}{2}}\mathcal{H}_1(s)\le C,
			\end{split}
		\end{equation*}
		which yields
		\begin{equation}\label{new_H1_L2_28}
			\begin{split}
				\mathcal{H}_1(t)\le C (1+t)^{-\frac12},
			\end{split}
		\end{equation} for $t\ge T_2$. In fact, for all $t\le T_2$, $\mathcal{H}_1(t)$ is bounded. Consequently, for all $t>0$, (\ref{new_H1_L2_28}) also holds. The proof of Lemma \ref{lemma_upper_decay} is complete.
		
	\end{proof}
	
	\medskip
	
	To get sharper decay rate of the quantities on the left-hand side of (\ref{utauH1}), we employ the Fourier splitting method (see \cite{Schonbek 1985}).
	
	\begin{lemma}\label{lemma_upper_decay+1}
		Under the assumptions of Theorem \ref{thm_OB_d_decay}, we have
		\begin{equation}\label{utauH1}
			\| u(t)\|_{H^1}^2 + \| \tau(t)\|_{H^1}^2\le C(1+t)^{-1},
		\end{equation} for all $t>0.$
	\end{lemma}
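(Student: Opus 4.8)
The plan is to upgrade the rate in Lemma \ref{lemma_upper_decay} from $(1+t)^{-1/2}$ to $(1+t)^{-1}$ by running the Fourier splitting method on the forcing-free energy inequality \eqref{new_H1_L2},
$$\udt\mathcal{H}_1(t) + \frac{\beta K}{2}\|\tau\|_{H^1}^2 + \frac{\eta_1\alpha}{4}\|\nabla u\|_{L^2}^2\leq 0,$$
but now with a time-dependent cut-off $g_1^2(t)=\frac{C_0}{1+t}$ (with $C_0$ a large constant to be fixed) in place of the constant radius $R_1$ used before. Splitting $\|\nabla u\|_{L^2}^2\geq g_1^2\int_{|\xi|>g_1}|\hat u|^2\,\ud\xi = g_1^2\|u\|_{L^2}^2 - g_1^2\int_{|\xi|\leq g_1}|\hat u|^2\,\ud\xi$ and retaining half of the gradient dissipation, I would produce the differential inequality \eqref{new_H1_L2_29}
$$\udt\mathcal{H}_1(t) + \frac{\rho}{1+t}\mathcal{H}_1(t)\leq\frac{C}{1+t}\int_{|\xi|\leq g_1(t)}|\hat u|^2\,\ud\xi,$$
with a coefficient $\rho$ proportional to $C_0$, so that $\rho>1$ can be guaranteed. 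The key point making $\rho$ genuinely large is that $\|\tau\|_{H^1}^2$ and the retained $\|\nabla u\|_{L^2}^2$ carry constant-coefficient (hence faster than any power) dissipation, so for $t$ beyond a finite time $t^\ast=t^\ast(C_0)$ this dissipation dominates the corresponding part of $\mathcal{H}_1$, while $\|u\|_{L^2}^2$ receives the boosted damping $\tfrac{\eta_1\alpha}{8}g_1^2\|u\|_{L^2}^2$; here I would also use \eqref{bu_9} to pass freely between $\mathcal{H}_1$ and $\|(u,\tau)\|_{H^1}^2$.

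The heart of the matter is estimating the low-frequency remainder $\int_{|\xi|\leq g_1(t)}|\hat u|^2\,\ud\xi$. I would use the Duhamel representation \eqref{Greenfunction_13-1} restricted to $|\xi|\leq g_1$ together with the Green's-function bounds of Lemma \ref{lemma_Greenfunction_4}. The linear part contributes at most $C\big(\int_{|\xi|\leq g_1}e^{-2\theta|\xi|^2t}\,\ud\xi\big)^{1/2}\leq C|B_{g_1}|^{1/2} = C(1+t)^{-1/2}$. For the nonlinear part the decisive observation is that, since $\mathrm{div}\,u=0$, the nonlinearities are in conservative form, $u\cdot\nabla u=\mathrm{div}(u\otimes u)$ and $u\cdot\nabla\tau=\mathrm{div}(u\otimes\tau)$, which yields the extra low-frequency gain $|\hat{\mathcal{F}}_1(\xi,s)|\leq C|\xi|\,\|u(s)\|_{L^2}^2$ and $|\hat{\mathcal{F}}_2(\xi,s)|\leq C|\xi|\,\|u(s)\|_{L^2}\|\tau(s)\|_{L^2}$. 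Feeding in the already-proven bound $\|u\|_{L^2}^2+\|\tau\|_{L^2}^2\leq C(1+s)^{-1/2}$ from Lemma \ref{lemma_upper_decay} and using $\int_{|\xi|\leq g_1}|\xi|^2e^{-2\theta|\xi|^2(t-s)}\,\ud\xi\leq Cg_1^4=C(1+t)^{-2}$, a Minkowski-inequality estimate would bound the nonlinear contribution by $\frac{C}{1+t}\int_0^t(1+s)^{-1/2}\,\ud s\leq C(1+t)^{-1/2}$, with no convolution and hence no logarithm. Altogether $\int_{|\xi|\leq g_1}|\hat u|^2\,\ud\xi\leq C(1+t)^{-1}$.

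Finally I would close by Gronwall: since $\rho>1$, multiplying \eqref{new_H1_L2_29} by $(1+t)^\rho$ and integrating from $t^\ast$ turns the forcing $\frac{C}{1+t}\cdot C(1+t)^{-1}=C(1+t)^{-2}$ into $\int_{t^\ast}^t(1+s)^{\rho-2}\,\ud s\leq C(1+t)^{\rho-1}$, whence $\mathcal{H}_1(t)\leq C(1+t)^{-\rho}+C(1+t)^{-1}\leq C(1+t)^{-1}$ for $t\geq t^\ast$, while the interval $[0,t^\ast]$ is handled by the boundedness of $\mathcal{H}_1$. The main obstacle is exactly the low-frequency estimate of the preceding paragraph: a constant radius together with Lemma \ref{lemma_Greenfunction_7} would generate the borderline integral $\int_0^t(1+t-s)^{-1/2}(1+s)^{-1}\,\ud s\sim(1+t)^{-1/2}\ln(1+t)$, which cannot be dominated by $(1+t)^{-1/2}$; it is the combination of the shrinking ball (measure $\sim(1+t)^{-1}$) with the conservative $|\xi|$-gain of the nonlinearity that removes this logarithm, and arranging the two scalings so that they match is the delicate step.
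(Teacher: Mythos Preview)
Your proposal is correct and follows essentially the same route as the paper: the time-dependent Fourier-splitting radius $g_1^2(t)\sim(1+t)^{-1}$, the conservative $|\xi|$-gain on the nonlinearities, the input of the preliminary $(1+t)^{-1/2}$ rate from Lemma~\ref{lemma_upper_decay}, and closing via the integrating factor $(1+t)^\rho$. The only cosmetic difference is that the paper obtains the pointwise bound $|\hat u(\xi,t)|\le C$ on $\{|\xi|\le g_1(t)\}$ (see \eqref{hatu}--\eqref{u_L00_low_2}) and then multiplies by $|B_{g_1}|\sim(1+t)^{-1}$, whereas you estimate $\|\hat u\|_{L^2(|\xi|\le g_1)}$ directly via Minkowski; both yield $\int_{|\xi|\le g_1}|\hat u|^2\,\ud\xi\le C(1+t)^{-1}$ and close identically.
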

	\begin{proof}
		Decomposing the term $\|\Lambda u\|_{L^2}^2$ again, there holds
		\begin{equation}\label{new_H1_L2_29}
			\begin{split}
				\|\Lambda u\|_{L^2}^2 = \|\nabla u\|_{L^2}^2 &=  \frac12 \|\nabla u\|_{L^2}^2 + \frac12 \int_{|\xi|\ge g_1(t)}
				|\xi|^2|\hat{u}|^2	{\rm d}\xi + \frac12 \int_{|\xi|\le g_1(t)}
				|\xi|^2|\hat{u}|^2	{\rm d}\xi\\
				&\ge \frac12 \|\nabla u\|_{L^2}^2 + \frac12 g_1(t)^{2}\int_{|\xi|\ge g_1(t)}
				|\hat{u}|^2	{\rm d}\xi,
			\end{split}
		\end{equation}
		where $g_1^2(t)=\frac{24}{\eta_1}(1+t)^{-1}$. Then $g_1^2(t)\le 1$ for all $t\ge \frac{24}{\eta_1}-1$.
		
		Substituting \eqref{new_H1_L2_29} into \eqref{new_H1_L2}, we get
		\begin{equation}\label{new_H1_L2_12}
			\begin{split}
				\udt \mathcal{H}_1(t) + \frac{3}{2}(1+t)^{-1}\mathcal{H}_1(t)
				\le C(1+t)^{-1}\int_{|\xi|\le g_1(t)}
				|\hat{u}|^2	{\rm d}\xi,
			\end{split}
		\end{equation}for all $t\ge \max\{\frac{24}{\eta_1}-1,\frac{6}{\beta}-1\}=:t_2$.
		
		Recalling (\ref{Greenfunction_13-1}), we have
		\begin{equation} \label{duhamel}
			\hat{u}(t) = \,\mathcal{G}_3 \hat{u}_0 + K|\xi|\mathcal{G}_1\hat{\sigma}_0 + \int_0^t\mathcal{G}_3(t-s)\hat{\mathcal{F}}_1 (s) + K|\xi|\mathcal{G}_1(t-s)\hat{\mathcal{F}}_2 (s)
			{\rm d}s,
		\end{equation}
		where
		\begin{eqnarray*}
			\mathcal{F}_1=-\mathbb{P}\left(u\cdot\nabla u\right),\
			\mathcal{F}_2= -\Lambda^{-1}\mathbb{P}{\rm div}\left(u\cdot\nabla\tau\right).
		\end{eqnarray*}
		It is easy to see that
		\begin{equation*}
			g_1^2(t)=\frac{24}{\eta_1}(1+t)^{-1}\le R^2 \doteq \frac{\beta^2}{4\alpha K},
		\end{equation*}
		for all $t\ge	\max\{\frac{96\alpha K}{\eta_1\beta^2}-1, t_2\}=:t_3$.
		
		By virtue of Lemma \ref{lemma_Greenfunction_4}, (\ref{duhamel}) can be estimated as below:
		\begin{align}\label{hatu}
			\nonumber	|\hat{u}|\le \,&Ce^{-\theta|\xi|^2t}|\hat{u}_0| + C |\xi| e^{-\theta|\xi|^2t}|\hat{\sigma}_0|
			+ C\int_0^t	e^{-\theta|\xi|^2(t-s)}|\xi||\widehat{u\otimes u}(s)| {\rm d}s
			\\ \nonumber &+ C\int_0^t  |\xi| e^{-\theta|\xi|^2(t-s)}|\xi||\widehat{u \otimes\tau}(s)|  {\rm d}s\\
			\le \,&C + C|\xi|\int_0^t \|u\|_{L^2}^2{\rm d}s + C|\xi|^2\int_0^t \|u\|_{L^2}\|\tau\|_{L^2}{\rm d}s,
		\end{align}for all $|\xi|\le g_1(t)\le R$ as $t\ge t_3$.
		
		By virtue of \eqref{new_H1_L2_28} and (\ref{hatu}), we get
		\begin{equation}\label{u_L00_low_2}
			\begin{split}
				|\hat{u}(\xi,t)|\le C,
			\end{split}
		\end{equation}  for $t\ge t_3$ and $|\xi|\le g_1(t)$.
		
		Substituting \eqref{u_L00_low_2} into \eqref{new_H1_L2_12}, we get
		\begin{equation}\label{new_H1_L2_15}
			\begin{split}
				\udt \mathcal{H}_1(t) + \frac{3}{2}(1+t)^{-1}\mathcal{H}_1(t)
				\le C(1+t)^{-2},
			\end{split}
		\end{equation}  for all $t\ge t_3$.
		
		Multiplying   \eqref{new_H1_L2_15} by $(1+t)^{\frac32}$, we can deduce that
		\begin{equation*}\label{new_H1_L2_13}
			\begin{split}
				\udt ((1+t)^{\frac32}\mathcal{H}_1(t))\le C(1+t)^{-\frac12},
			\end{split}
		\end{equation*}for all $t\ge t_3$, which yields
		\begin{equation}\label{new_H1_L2_14}
			\begin{split}
				\mathcal{H}_1(t)\le C (1+t)^{-1}.
			\end{split}
		\end{equation}
		Similarly, since $\mathcal{H}_1(t)$ is bounded for all $t\le t_3$, thus (\ref{new_H1_L2_14}) also holds for all $t>0$.
	\end{proof}
	
	\medskip	
	
	Next, we will develop a way to capture the optimal time-decay rates for
	the higher-order derivatives of the solution.

	\begin{lemma}\label{lemma_upper_decay_2}
		Under the assumptions of Theorem \ref{thm_OB_d_decay}, we have
		\begin{equation}\label{upper decay2}
			\|\nabla u(t)\|_{H^2}^2 + \|\nabla \tau(t)\|_{H^2}^2\le C (1+t)^{-2},
		\end{equation}
		for all $t>0$.
	\end{lemma}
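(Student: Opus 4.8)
The plan is to run the energy/Fourier-splitting scheme of Lemmas \ref{lemma_upper_decay} and \ref{lemma_upper_decay+1} one differential order higher, but now with a \emph{time-independent} low-frequency cut-off. Since the target quantity $\|\nabla u\|_{H^2}^2+\|\nabla\tau\|_{H^2}^2$ collects spatial derivatives of orders $1,2,3$, I would split the work between the two functionals $\mathcal{H}_2$ and $\mathcal{H}_3$ introduced just before Lemma \ref{lemma_upper_decay}, whose sum dominates this quantity (with the equivalence of \eqref{bu_9}). For each $j\in\{2,3\}$ I first assemble a differential inequality for $\mathcal{H}_j$ by summing the pure energy estimates \eqref{est_first}--\eqref{est_third} with the relevant cross-term estimate \eqref{est_first_u}--\eqref{est_third_u} taken at $\mu=0$. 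The decisive structural point is that the coupling term $\eta_j\langle\Lambda^j u,\Lambda^{j-1}\sigma\rangle$ yields only the top-order velocity dissipation $\|\nabla^j u\|_{L^2}^2$ together with a forcing $\|\nabla^{j-1}\sigma\|_{H^1}^2\le C\|\nabla^{j-1}\tau\|_{H^1}^2$ that starts at order $j-1\ge1$; hence it is absorbed by the $\tau$-damping already present in $\mathcal{H}_j$ and \emph{no} $L^2$-norm of $\tau$ is generated. What remains unmatched is a single velocity term $C\delta^2\|\nabla^{j-1}u\|_{L^2}^2$ of order $j-1$, for which there is no direct dissipation.

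The order-$(j-1)$ velocity dissipation is then manufactured by the splitting identity of \eqref{new_H1_L2_29}, but applied with the constant radius $R_1=\min\{1,R\}$ rather than a decaying one:
\begin{equation*}
\|\nabla^j u\|_{L^2}^2\ge\tfrac12\|\nabla^j u\|_{L^2}^2+\tfrac12 R_1^2\|\nabla^{j-1}u\|_{L^2}^2-\tfrac12 R_1^2\int_{|\xi|\le R_1}|\xi|^{2(j-1)}|\hat u|^2\,\ud\xi.
\end{equation*}
This is exactly where the higher-order analysis departs from Lemma \ref{lemma_upper_decay+1}: because the manufactured coefficient $\tfrac12 R_1^2$ is a fixed constant (not $g_1^2(t)\to0$), it dominates and absorbs the small source $C\delta^2\|\nabla^{j-1}u\|_{L^2}^2$ once $\delta$ is taken small. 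Combined with \eqref{bu_9}, this produces a constant-coefficient inequality
\begin{equation*}
\udt\mathcal{H}_j(t)+c\,\mathcal{H}_j(t)\le C\int_{|\xi|\le R_1}|\xi|^{2(j-1)}|\hat u|^2\,\ud\xi,
\end{equation*}
so that only the low-frequency velocity forcing must still be estimated.

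To bound that forcing I would return to the Duhamel formula \eqref{Greenfunction_13-1} and the Green-function bounds of Lemma \ref{lemma_Greenfunction_4}, estimating the weighted integral directly instead of quoting Lemma \ref{lemma_Greenfunction_7} verbatim. The essential gain comes from the divergence structure $\mathcal{F}_1=-\mathbb{P}\,\udiv(u\otimes u)$ and $\mathcal{F}_2=-\Lambda^{-1}\mathbb{P}\,\udiv(u\otimes\tau)$, which gives the pointwise bounds $|\hat{\mathcal{F}}_1|\le C|\xi|\|u\|_{L^2}^2$ and $|\hat{\mathcal{F}}_2|\le C|\xi|\|u\|_{L^2}\|\tau\|_{L^2}$. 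Feeding these together with $\|(u,\tau)(s)\|_{H^1}\le C(1+s)^{-1/2}$ from Lemma \ref{lemma_upper_decay+1} into the heat-type kernel $e^{-\theta|\xi|^2(t-s)}$ yields convolution integrals of the form $\int_0^t(1+s)^{-1}(1+t-s)^{-(j+1)/2}\,\ud s\le C(1+t)^{-1}$; the extra factor $|\xi|$ is precisely what removes the logarithm a crude bound would create and pins the forcing at $\int_{|\xi|\le R_1}|\xi|^{2(j-1)}|\hat u|^2\,\ud\xi\le C(1+t)^{-2}$ for $j=2,3$. A constant-coefficient Gr\"onwall argument then gives $\mathcal{H}_j(t)\le C(1+t)^{-2}$, and summing over $j=2,3$ yields \eqref{upper decay2}; boundedness for small $t$ is handled exactly as in the previous lemmas.

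The main obstacle is the structural lack of velocity dissipation: the coupling only ever dissipates the velocity one order \emph{above} the energy it sits in, so each $\mathcal{H}_j$ carries an order-$(j-1)$ velocity norm with no partner in the dissipation, supported only by a small nonlinear source. Overcoming this hinges on the constant cut-off, which converts the top-order dissipation into genuine order-$(j-1)$ dissipation \emph{with a constant rate}, something the time-dependent cut-off of Lemma \ref{lemma_upper_decay+1} could not supply. The secondary technical hurdle is extracting the sharp $(1+t)^{-2}$ forcing rate; this relies on the divergence form of the nonlinearities and on the already-established $(1+t)^{-1/2}$ decay of $\|(u,\tau)\|_{H^1}$, and explains why the stated rate is the uniform $(1+t)^{-2}$ rather than the faster rates one might expect for the top derivatives.
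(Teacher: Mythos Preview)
Your argument is correct, but it differs from the paper's route in a genuine way. The paper proves \eqref{upper decay2} by a two-pass bootstrap: first it keeps the unmatched term $\|\nabla^{j-1}u\|_{L^2}^2$ on the right-hand side as a forcing (not absorbing it), uses \eqref{utauH1} to get $\mathcal{H}_2,\mathcal{H}_3\le C(1+t)^{-1}$, and then runs a second \emph{time-dependent} Fourier split with radius $g_2^2(t)\sim(1+t)^{-1}$; this manufactures a decaying coefficient in front of $\|\nabla u\|_{L^2}^2$ that exactly matches the now-known decay $\|\nabla u\|_{L^\infty}^2+\|\nabla\tau\|_{L^\infty}^2\le C(1+t)^{-1}$ of the nonlinear source, while the low-frequency forcing is controlled simply by the pointwise bound $|\hat u|\le C$ on the shrinking ball together with $g_2^6(t)\le C(1+t)^{-3}$. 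Your approach instead fixes the radius at $R_1$, so the manufactured lower-order dissipation has a \emph{constant} rate and can swallow the small source $C\delta^2\|\nabla^{j-1}u\|_{L^2}^2$ outright; the price is that you must estimate $\int_{|\xi|\le R_1}|\xi|^{2(j-1)}|\hat u|^2\,\ud\xi$ directly by Duhamel, and this is where the divergence structure $|\hat{\mathcal F}_1|\le C|\xi|\|u\|_{L^2}^2$, $|\hat{\mathcal F}_2|\le C|\xi|\|u\|_{L^2}\|\tau\|_{L^2}$ is essential to avoid the logarithm and reach $(1+t)^{-2}$. Your route is a single pass and arguably cleaner; the paper's route stays closer to pure energy/Schonbek machinery and never needs to re-enter the Green-function representation at this stage. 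It is worth noting that the paper itself switches to exactly your constant-cutoff scheme in the very next lemma (Lemma \ref{lemma_upper_decay_3}) once $(1+t)^{-2}$ is in hand, so your approach anticipates that simplification one step early. One small slip: $\mathcal{F}_2=-\Lambda^{-1}\mathbb{P}\,\udiv(u\cdot\nabla\tau)=-\Lambda^{-1}\mathbb{P}\,\udiv\,\udiv(u\otimes\tau)$, not $-\Lambda^{-1}\mathbb{P}\,\udiv(u\otimes\tau)$; your pointwise bound $|\hat{\mathcal F}_2|\le C|\xi|\|u\|_{L^2}\|\tau\|_{L^2}$ is nonetheless correct.
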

	\begin{proof}
		Summing \eqref{est_first}, \eqref{est_second} and $\eta_2$\eqref{est_second_u} ($\mu=0$) up, we obtain that
		\begin{equation}\label{new_H2_L2_1}
			\begin{split}
				\udt \mathcal{H}_2(t) +  \frac{\beta K}{2}\|\nabla\tau\|_{H^1}^2 +\frac{\eta_2\alpha}{8}\|\Lambda^2 u\|_{L^2}^2	\le C\eta_2\big(\|\nabla u\|_{L^\infty}^{2}+\|\nabla \tau\|_{L^\infty}^{2}\big)\|\nabla u\|_{L^2}^2.
			\end{split}
		\end{equation}Then, \eqref{new_H2_L2_1} yields
		\begin{equation}\label{new_H2_L2_2}
			\begin{split}
				\udt \mathcal{H}_2(t) +  \bar{c}_0\mathcal{H}_2(t) \le C\|\nabla u\|_{L^2}^2,
			\end{split}
		\end{equation} for some positive constants $\bar{c}_0$ and $C$.
		
		By virtue of \eqref{utauH1}, \eqref{new_H2_L2_2} yields
		\begin{equation}\label{new_H2_L2_3}
			\begin{split}
				\mathcal{H}_2(t)\le &C(1+t)^{-1},
			\end{split}
		\end{equation}for all $t>0$.
		
		Similarly, summing \eqref{est_second}, \eqref{est_third} and $\eta_3$\eqref{est_third_u} ($\mu=0$) up, we obtain that
		\begin{equation} \label{new5}
			\udt \mathcal{H}_3(t) +  \frac{\beta K}{2}\|\nabla^2\tau\|_{H^1}^2 +\frac{\eta_3\alpha}{8}\|\Lambda^3 u\|_{L^2}^2	\le C\big(\|\nabla u\|_{L^\infty}+\|\nabla \tau\|_{L^\infty}\big)\|\nabla^2 u\|_{L^2}^2,
		\end{equation}which yields
		\begin{equation}\label{new_H3_L2_2}
			\begin{split}
				\udt \mathcal{H}_3(t) +  \bar{c}_1\mathcal{H}_3(t) \le C\|\nabla^2 u\|_{L^2}^2,
			\end{split}
		\end{equation}for some positive constants $\bar{c}_1$ and $C$.
		
		Combining \eqref{new_H2_L2_3} with \eqref{new_H3_L2_2}, we get
		\begin{equation} \label{new3}
			\mathcal{H}_3(t)\le C(1+t)^{-1},
		\end{equation} for all $t>0$.
		
		Similar to \eqref{new_H1_L2_29}, we have
		\begin{equation}\label{na2u}
			\|\Lambda^2 u\|_{L^2}^2 = \|\nabla^2 u\|_{L^2}^2 \ge \frac12 \|\nabla^2 u\|_{L^2}^2 + \frac12 g_2^2(t)\int_{|\xi|\ge g_2(t)}
			|\xi|^2|\hat{u}|^2	{\rm d}\xi,
		\end{equation}
		where $g_2(t)>0$ is to be determined.
		
		Substituting (\ref{na2u}) into \eqref{new_H2_L2_1}, and using (\ref{new_H2_L2_3}) and (\ref{new3}), we have
		\begin{equation}\label{new_H2_L2_4}
			\begin{split}
				&\udt \mathcal{H}_2(t) +  \frac{\beta K}{2}\|\nabla\tau\|_{H^1}^2 +\frac{\eta_2\alpha}{16}\|\nabla^2 u\|_{L^2}^2 + \frac{\eta_2\alpha}{16}g_2^2(t)\|\nabla u\|_{L^2}^2\\
				\le\,&\frac{\eta_2\alpha}{16}g_2^2(t)\int_{|\xi|\le g_2(t)}
				|\xi|^2|\hat{u}|^2	{\rm d}\xi + C\eta_2\big(\|\nabla u\|_{L^\infty}^{2}+\|\nabla \tau\|_{L^\infty}^{2}\big)\|\nabla u\|_{L^2}^2\\
				\le\,&\frac{\eta_2\alpha}{16}g_2^2(t)\int_{|\xi|\le g_2(t)}
				|\xi|^2|\hat{u}|^2	{\rm d}\xi +\frac{C\eta_2}{1+t}\|\nabla u\|_{L^2}^2.
			\end{split}
		\end{equation}
		Here, taking $g_2^2(t)=\frac{160}{\eta_2}(1+t)^{-1}$, then $g_2^2(t)\le 1$ for all $t\ge \frac{160}{\eta_2}-1=:t_4$. In addition, letting $\eta_2 \le \min \{16\beta, \frac{5\alpha}{C}\}$, then \eqref{new_H2_L2_4}  yields
		\begin{equation}\label{new4}
			\begin{split}
				\udt \mathcal{H}_2(t) + \frac{\eta_2}{64}g_2^2(t)\,(2\alpha\|\nabla u\|_{H^1}^2 + 2K\| \nabla\tau\|_{H^1}^2)
				\le C(1+t)^{-3},
			\end{split}
		\end{equation}where we have used the boundedness of $|\hat{u}|$ for all $|\xi|\le g_2(t)$, which is similar to (\ref{u_L00_low_2}).
		
		Combining (\ref{new4}) with \eqref{bu_9}, we obtain
		\begin{equation}\label{new_H2_L2_6}
			\begin{split}
				\udt \mathcal{H}_2(t) + \frac{\eta_2}{64}g_2^2(t)\mathcal{H}_2(t)
				\le C(1+t)^{-3}.
			\end{split}
		\end{equation}
		Multiplying   \eqref{new_H2_L2_6} by $(1+t)^\frac52$, we can deduce that for all $t\ge t_4$,
		\begin{equation}\label{new_H2_L2_7}
			\begin{split}
				\mathcal{H}_2(t)\le &C(1+t)^{-2}.
			\end{split}
		\end{equation}
		Substituting \eqref{new_H2_L2_7} into  \eqref{new_H3_L2_2}, we can also deduce that for all $t\ge t_4$,
		\begin{equation}\label{new_H3_L2_8}
			\begin{split}
				\mathcal{H}_3(t)\le &C(1+t)^{-2}.
			\end{split}
		\end{equation}
		Since for all $t\le t_4$, $\mathcal{H}_2(t)$ and $\mathcal{H}_3(t)$ are bounded, thus \eqref{new_H2_L2_7} and \eqref{new_H3_L2_8} also hold for all $t>0$.
	\end{proof}

	\begin{corollary}\label{cor1}
		Under the assumptions of Theorem \ref{thm_OB_d_decay}, we have
		\begin{equation}\label{new_tau_L2_2}
			\begin{split}
				\|\tau(t)\|_{L^2}^2\le C(1+t)^{-2},
			\end{split}
		\end{equation}for all $t>0.$
	\end{corollary}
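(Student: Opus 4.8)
The plan is to work directly with the $L^2$ energy identity for $\tau$, exploiting the constant-rate damping term $\beta\tau$ in the second equation of \eqref{Oldroyd_B_d} (with $\mu=0$) to convert the already-established decay of the velocity gradient into the decay of $\|\tau\|_{L^2}$. First I would multiply the $\tau$-equation by $\tau$ and integrate over $\mathbb{R}^2$. Since $\udiv\, u=0$, the convective term drops out because $\langle (u\cdot\nabla)\tau,\tau\rangle=\frac12\int_{\mathbb{R}^2} u\cdot\nabla|\tau|^2\,\ud x = -\frac12\int_{\mathbb{R}^2}(\udiv\, u)|\tau|^2\,\ud x=0$, leaving
\begin{equation*}
	\frac12\udt\|\tau\|_{L^2}^2 + \beta\|\tau\|_{L^2}^2 = \alpha\langle\mathbb{D}(u),\tau\rangle.
\end{equation*}

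Next I would estimate the right-hand side by $\alpha\langle\mathbb{D}(u),\tau\rangle \le \alpha\|\nabla u\|_{L^2}\|\tau\|_{L^2}$, using $\|\mathbb{D}(u)\|_{L^2}\le\|\nabla u\|_{L^2}$, and apply Young's inequality to absorb $\frac{\beta}{2}\|\tau\|_{L^2}^2$ into the damping. This yields the linear damped differential inequality
\begin{equation*}
	\udt\|\tau\|_{L^2}^2 + \beta\|\tau\|_{L^2}^2 \le C\|\nabla u\|_{L^2}^2.
\end{equation*}
Invoking Lemma \ref{lemma_upper_decay_2}, which gives $\|\nabla u(t)\|_{L^2}^2\le C(1+t)^{-2}$, the forcing term is controlled by $C(1+t)^{-2}$.

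Finally, I would integrate this inequality via Duhamel's formula together with Gronwall's inequality to obtain
\begin{equation*}
	\|\tau(t)\|_{L^2}^2 \le e^{-\beta t}\|\tau_0\|_{L^2}^2 + C\int_0^t e^{-\beta(t-s)}(1+s)^{-2}\,\ud s,
\end{equation*}
and then split the convolution integral at $s=\frac{t}{2}$: on $[0,\frac{t}{2}]$ the exponential factor is bounded by $e^{-\frac{\beta t}{2}}$ while the polynomial part is integrable, and on $[\frac{t}{2},t]$ the polynomial factor is bounded by $C(1+t)^{-2}$ while the exponential integrates to a constant. Both pieces, together with the exponentially small initial contribution, are dominated by $C(1+t)^{-2}$, which is the claimed estimate \eqref{new_tau_L2_2}. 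There is no genuine obstacle here; the only point requiring a little care is the split-integral estimate of $\int_0^t e^{-\beta(t-s)}(1+s)^{-2}\,\ud s$, and the whole argument hinges on the fact that the damping coefficient $\beta$ is a positive constant independent of frequency, so the sharp decay of $\nabla u$ transfers without loss to $\tau$ in the $L^2$ norm.
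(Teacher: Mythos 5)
Your proof is correct and follows essentially the same route as the paper: an $L^2$ energy estimate for the $\tau$-equation exploiting the damping term $\beta\tau$, the decay $\|\nabla u(t)\|_{L^2}^2\le C(1+t)^{-2}$ supplied by Lemma \ref{lemma_upper_decay_2}, and integration of the resulting damped ODE. The only (harmless) difference is that you annihilate the convective term exactly using $\mathrm{div}\,u=0$, whereas the paper's inequality \eqref{new_tau_L2_1} with $k=0$ estimates $\|u\cdot\nabla\tau\|_{L^2}^2$ nonlinearly and therefore also draws on the decay of higher derivatives of $\tau$; your cancellation renders those extra ingredients unnecessary.
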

	\begin{proof}
		Applying $\nabla^k$ ($k=0,1,2$) to (\ref{Oldroyd_B_d})$_2$, multiplying the result by $\nabla^k \tau$, and integrating with respect to $x$, we have that
		\begin{equation}\label{new_tau_L2_1}
			\begin{split}
				&\frac{1}{2}\frac{\mathrm{d}}{\mathrm{d}t} \|\nabla^k \tau\|_{L^2}^2  + \frac{\beta}{2}\|\nabla^k \tau\|_{L^2}^2	\\&\le C   \|\nabla^{k+1}u\|_{L^2}^2 + C\|\nabla^{k}(u\cdot \nabla\tau)\|_{L^2}^2\\
				&\le  C\|\nabla^{k+1}u\|_{L^2}^2 + C\|\nabla^{k+1}\tau\|_{L^2}^2\|u\|_{L^\infty}^2 + \|\nabla \tau\|_{L^\infty}^2\|\nabla^{k}u\|_{L^2}^2.
			\end{split}
		\end{equation}
	However, (\ref{new_tau_L2_1}) for $k=1,2$ will be used later. In fact, to prove \eqref{new_tau_L2_2}, it suffices to take $k=0$ in \eqref{new_tau_L2_1}. Then by virtue of (\ref{new_H2_L2_7}) and \eqref{new_H3_L2_8}, the inequality \eqref{new_tau_L2_2} holds.
		
	\end{proof}
	\begin{lemma}\label{lemma_upper_decay_3}
		Under the assumptions of Theorem \ref{thm_OB_d_decay}, we have
		\begin{equation}\label{na2udecay}
			\|\nabla^2 u(t)\|_{H^1}^2 + \|\nabla^2 \tau(t)\|_{H^1}^2\le C(1+t)^{-3},
		\end{equation} for all $t>0.$
	\end{lemma}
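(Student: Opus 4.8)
The plan is to upgrade the bound $\mathcal{H}_3(t)\le C(1+t)^{-2}$ of Lemma~\ref{lemma_upper_decay_2} to $\mathcal{H}_3(t)\le C(1+t)^{-3}$ by running the Fourier splitting scheme of Lemma~\ref{lemma_upper_decay_2} one order higher, starting from the third-order energy inequality \eqref{new5}; since $\mathcal{H}_3\sim\|\nabla^2(u,\tau)\|_{H^1}^2$ by the analogue of \eqref{bu_9}, this is exactly \eqref{na2udecay}. The only $u$-dissipation on the left of \eqref{new5} is $\|\Lambda^3u\|_{L^2}^2=\|\nabla^3u\|_{L^2}^2$, so, just as in \eqref{na2u}, I would split $\|\nabla^3u\|_{L^2}^2\ge\frac12\|\nabla^3u\|_{L^2}^2+\frac12 g_3^2(t)\big(\|\nabla^2u\|_{L^2}^2-\int_{|\xi|\le g_3}|\xi|^4|\hat u|^2\,\ud\xi\big)$ with $g_3^2(t)=C_0(1+t)^{-1}$, thereby manufacturing the missing low-order dissipation $g_3^2\|\nabla^2u\|_{L^2}^2$ at the cost of a low-frequency remainder.

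The decisive point is to control the nonlinear forcing $C(\|\nabla u\|_{L^\infty}+\|\nabla\tau\|_{L^\infty})\|\nabla^2u\|_{L^2}^2$ on the right of \eqref{new5}. I would first upgrade the gradient $L^\infty$ rates: by the two-dimensional Gagliardo--Nirenberg inequality $\|\nabla f\|_{L^\infty}\le C\|\nabla f\|_{L^2}^{1/2}\|\nabla^3 f\|_{L^2}^{1/2}$ together with $\|\nabla u\|_{L^2},\|\nabla\tau\|_{L^2}\le C(1+t)^{-1}$ and $\|\nabla^3u\|_{L^2},\|\nabla^3\tau\|_{L^2}\le C(1+t)^{-1}$ supplied by \eqref{upper decay2}, I obtain the sharp rate $\|\nabla u\|_{L^\infty}+\|\nabla\tau\|_{L^\infty}\le C(1+t)^{-1}$. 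Hence the nonlinear term is dominated by $\frac{C}{1+t}\|\nabla^2u\|_{L^2}^2$, which—after choosing $C_0$ large enough—can be absorbed into the freshly produced dissipation $\frac{\eta_3\alpha}{16}g_3^2\|\nabla^2u\|_{L^2}^2=\frac{\eta_3\alpha C_0}{16(1+t)}\|\nabla^2u\|_{L^2}^2$. For the low-frequency remainder I would argue exactly as for \eqref{u_L00_low_2}: the Duhamel bound \eqref{hatu} together with the established $L^2$ decay gives $|\hat u(\xi,t)|\le C$ for $|\xi|\le g_3(t)\le R$, so that $g_3^2\int_{|\xi|\le g_3}|\xi|^4|\hat u|^2\,\ud\xi\le Cg_3^2\int_{|\xi|\le g_3}|\xi|^4\,\ud\xi\le Cg_3^6=C(1+t)^{-4}$.

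Collecting these estimates and using $g_3^2\le1$ to dominate the remaining dissipation ($\|\nabla^3u\|_{L^2}^2\ge g_3^2\|\nabla^3u\|_{L^2}^2$ and $\|\nabla^2\tau\|_{H^1}^2\ge g_3^2\|\nabla^2\tau\|_{H^1}^2$), the equivalence \eqref{bu_9} would convert the differential inequality into $\udt\mathcal{H}_3+\frac{cC_0}{1+t}\mathcal{H}_3\le C(1+t)^{-4}$. Fixing $C_0$ so large that both the absorption above holds and $cC_0>3$, multiplying by the integrating factor $(1+t)^{cC_0}$ and integrating yields $\mathcal{H}_3(t)\le C(1+t)^{-3}$ once $g_3^2\le\min\{1,R^2\}$; the boundedness of $\mathcal{H}_3$ on the remaining compact interval (again from Lemma~\ref{lemma_upper_decay_2}) extends the bound to all $t>0$. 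I expect the genuine obstacle to be precisely this absorption step: it relies on the sharp decay $\|\nabla u\|_{L^\infty}+\|\nabla\tau\|_{L^\infty}\le C(1+t)^{-1}$, and any weaker rate (such as the naive $(1+t)^{-3/4}$ available before Lemma~\ref{lemma_upper_decay_2}) would leave a forcing slower than $(1+t)^{-4}$ and break the closure.
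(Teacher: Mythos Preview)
Your argument is correct (up to a harmless slip: the low-frequency remainder is $Cg_3^{8}=C(1+t)^{-4}$, not $Cg_3^{6}$, since $\int_{|\xi|\le g_3}|\xi|^4\,\ud\xi\sim g_3^{6}$ in two dimensions and you still multiply by $g_3^2$), but it follows a different route from the paper. You run the time-dependent Fourier splitting one level higher, absorb the nonlinear forcing $\frac{C}{1+t}\|\nabla^2u\|_{L^2}^2$ into the manufactured dissipation $g_3^2\|\nabla^2u\|_{L^2}^2$, bound the low-frequency piece by the crude pointwise estimate $|\hat u|\le C$, and close via the integrating factor $(1+t)^{cC_0}$. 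The paper instead keeps the \emph{fixed} cutoff $R$: it writes $\|\Lambda^3u\|_{L^2}^2\ge c\|\nabla^2u\|_{L^2}^2-C\int_{|\xi|\le R}|\xi|^4|\hat u|^2\,\ud\xi$, obtains the constant-coefficient inequality $\udt\mathcal{H}_3+\bar c_2\mathcal{H}_3\le C(\|\nabla u\|_{L^\infty}+\|\nabla\tau\|_{L^\infty})\|\nabla^2u\|_{L^2}^2+C\int_{|\xi|\le R}|\xi|^4|\hat u|^2\,\ud\xi$, bounds the nonlinear term \emph{directly} by $\|\nabla(u,\tau)\|_{H^2}^3\le C(1+t)^{-3}$ from \eqref{upper decay2} (no absorption needed), and controls the low-frequency integral by the Duhamel estimate of Lemma~\ref{lemma_Greenfunction_7} with $k=2$, which already gives $(1+t)^{-3}$; ordinary Gronwall then yields $\mathcal{H}_3\le C(1+t)^{-3}$. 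The paper's route is shorter and avoids the delicate balancing of $C_0$, while yours has the virtue of being a uniform continuation of the scheme used for $\mathcal{H}_2$ and of not invoking the Green-function lemma at this stage.
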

	\begin{proof} By virtue of (\ref{upper decay2}) and (\ref{new5}), we have
		\begin{equation*}\label{new_H3_L2_2+1}
			\begin{split}
				\udt \mathcal{H}_3(t) +  \bar{c}_2\mathcal{H}_3(t) \le& C\big(\|\nabla u\|_{L^\infty}+\|\nabla \tau\|_{L^\infty}\big)\|\nabla^2 u\|_{L^2}^2+C\int_{|\xi|\leq R}|\xi|^{4}|\hat{u}(t)|^2{\rm d}\xi\\ \le& C\big(\|\nabla u\|_{H^2}^3+\|\nabla \tau\|_{H^2}^3\big) +C\int_{|\xi|\leq R}|\xi|^{4}|\hat{u}(t)|^2{\rm d}\xi\\ \le& C(1+t)^{-3} +C\int_{|\xi|\leq R}|\xi|^{4}|\hat{u}(t)|^2{\rm d}\xi,
			\end{split}
		\end{equation*}for some positive constants $\bar{c}_2$ and $C$, which together with Lemma \ref{lemma_Greenfunction_7} yields
		\begin{equation}\label{new_H3_L2_2+2}
			\begin{split}
				\udt \mathcal{H}_3(t) +  \bar{c}_2\mathcal{H}_3(t) \le& C(1+t)^{-3}.
			\end{split}
		\end{equation}
		Then (\ref{na2udecay}) can be obtained by using (\ref{new_H3_L2_2+2}). The proof of Lemma \ref{lemma_upper_decay_3} is complete.
		
	\end{proof}

	\begin{lemma}\label{lemma_upper_decay_4}
		Under the assumptions of Theorem \ref{thm_OB_d_decay}, we have
		\begin{equation}\label{na3udecay}
			\begin{split}
				\|\nabla^3 u(t)\|_{L^2}^2 + \|\nabla^3 \tau(t)\|_{L^2}^2\le C (1+t)^{-4},
			\end{split}
		\end{equation}for all $t>0.$
	\end{lemma}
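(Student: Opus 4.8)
Since $\mathcal H_3$ is comparable to $\|\nabla^2(u,\tau)\|_{H^1}^2$ and hence is bounded below by $\|\nabla^2u\|_{L^2}^2$, whose sharp rate is only $(1+t)^{-3}$, the functional of Lemma \ref{lemma_upper_decay_3} cannot detect the faster decay of the third derivatives. The plan is to rerun the energy--spectral argument on the purely top-order functional
\begin{equation*}
\widetilde{\mathcal H}_3(t):=\alpha\|\nabla^3u\|_{L^2}^2+K\|\nabla^3\tau\|_{L^2}^2+\eta_3\langle\Lambda^3u,\Lambda^2\sigma\rangle .
\end{equation*}
Summing \eqref{est_third} and $\eta_3\cdot$\eqref{est_third_u} with $\mu=0$, the cross term again furnishes the velocity dissipation $\tfrac{\eta_3\alpha}{4}\|\nabla^3u\|_{L^2}^2$ and the damping gives $\tfrac{\beta K}{2}\|\nabla^3\tau\|_{L^2}^2$. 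Since the energy and its dissipation now carry the same weight $|\xi|^6$, they are equivalent and one reaches a \emph{constant-rate} inequality
\begin{equation*}
\udt\widetilde{\mathcal H}_3(t)+c\big(\|\nabla^3u\|_{L^2}^2+\|\nabla^3\tau\|_{L^2}^2\big)\le \mathcal N(t)+\eta_3C\|\nabla^2\tau\|_{L^2}^2,
\end{equation*}
where $\mathcal N$ collects the nonlinear contributions and the last term comes from the $\beta$- and $K$-terms of \eqref{I5} (recall $\|\nabla^2\sigma\|_{L^2}\le C\|\nabla^2\tau\|_{L^2}$).

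First I would dispose of $\mathcal N$. Estimating $\nabla^3(u\cdot\nabla u)$ and $\nabla^3(u\cdot\nabla\tau)$ as in \eqref{bu_21}, the terms carrying the factor $\|u\|_{L^\infty}^2\le C\|u\|_{H^2}^2\le C(1+t)^{-1}$ (by \eqref{utauH1}) have a vanishing coefficient and are absorbed into the left-hand dissipation for $t$ large, while the terms carrying $\|\nabla u\|_{L^\infty}^2\le C(1+t)^{-2}$ or $\|\nabla\tau\|_{L^\infty}^2$ multiply $\|\nabla^2(u,\tau)\|_{L^2}^2\le C(1+t)^{-3}$ (by \eqref{upper decay2}, \eqref{na2udecay}) and are therefore $O((1+t)^{-5})$. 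Thus $\mathcal N(t)$ is harmless.

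The genuine issue is the linear coupling $\eta_3C\|\nabla^2\tau\|_{L^2}^2$, which by \eqref{na2udecay} decays only like $(1+t)^{-3}$ and, left alone, would cap $\widetilde{\mathcal H}_3$ at $(1+t)^{-3}$. To upgrade it I would pass to the augmented functional $\mathcal E:=\widetilde{\mathcal H}_3+\lambda\|\nabla^2\tau\|_{L^2}^2$: the second-order tensor equation \eqref{new_tau_L2_1} with $k=2$ provides the \emph{full} damping $\lambda\beta\|\nabla^2\tau\|_{L^2}^2$, which can swallow $\eta_3C\|\nabla^2\tau\|_{L^2}^2$ once $\lambda\beta>\eta_3C$, at the cost of a new forcing $\lambda C\|\nabla^3u\|_{L^2}^2$ that is reabsorbed into $\tfrac{\eta_3\alpha}{4}\|\nabla^3u\|_{L^2}^2$ provided $\lambda$ is small relative to $\eta_3$. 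Splitting the remaining second-order quantities at the fixed radius $R$ (as in the proof of Lemma \ref{lemma_upper_decay_3}), the high frequencies are absorbed and one is left with $\udt\mathcal E+c\,\mathcal E\le C\!\int_{|\xi|\le R}|\xi|^6|\hat u|^2\,\ud\xi+O((1+t)^{-5})$. Estimating this low-frequency integral by Lemma \ref{lemma_Greenfunction_7} with $k=3$, and exploiting the divergence form $|\widehat{\mathcal F}_1|\le C|\xi|\|u\|_{L^2}^2$, $|\widehat{\mathcal F}_2|\le C|\xi|\|u\|_{L^2}\|\tau\|_{L^2}$ together with the optimal rates for $u,\nabla u,\tau$, one aims at $\int_{|\xi|\le R}|\xi|^6|\hat u|^2\,\ud\xi\le C(1+t)^{-4}$. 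Since the damping coefficient of $\mathcal E$ is a positive constant, Gronwall's inequality and the boundedness of $\mathcal E$ on any bounded time interval then yield \eqref{na3udecay}.

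The delicate point -- and the step I expect to be the main obstacle -- is closing this coupled $u$--$\tau$ system at the \emph{sharp} exponent. On one hand, the constants in ``$\lambda\beta>\eta_3C$'' and ``$\lambda C<\tfrac{\eta_3\alpha}{4}$'' must be reconciled with the fixed constant multiplying the $\beta$-coupling $\langle\beta\Lambda^2\sigma,\Lambda^3u\rangle$ in \eqref{I5}, which forces the Young weights there to be chosen with care. On the other hand, and more seriously, the low-frequency contribution must really decay like $(1+t)^{-4}$: the self-interaction $\mathcal F_1=-\mathbb P(u\cdot\nabla u)$ enters weighted by $\|u(s)\|_{L^2}^2\sim(1+s)^{-1}$, whose naive contribution near $s=t$ is only $(1+t)^{-1}$ in $\|\nabla^3u\|_{L^2}$. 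Pushing this down to the required $(1+t)^{-2}$ demands a careful, frequency-split exploitation of the divergence structure of the nonlinearity and of the already-established optimal decay of the lower-order norms, and this bookkeeping is where the essential work of the lemma lies.
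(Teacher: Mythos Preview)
Your diagnosis is right: the top-order functional $\widetilde{\mathcal H}_3$ must be used, and the linear coupling term $\eta_3 C\|\nabla^2\sigma\|_{L^2}^2$ arising from \eqref{I5} is the genuine obstruction. But your proposed remedy --- augmenting by $\lambda\|\nabla^2\tau\|_{L^2}^2$ via \eqref{new_tau_L2_1} --- does not close, and the concern you flag about reconciling the two $\lambda$-constraints is not merely delicate but fatal. The $\beta$-coupling $\langle\beta\Lambda^2\sigma,\Lambda^3u\rangle$ in \eqref{I5} forces a contribution of order $\tfrac{4\beta^2}{\alpha}\|\Lambda^2\sigma\|_{L^2}^2$ on the right (any Young split that leaves positive velocity dissipation produces at least this much), so absorbing it demands $\lambda\beta\gtrsim\eta_3\tfrac{4\beta^2}{\alpha}$, i.e.\ $\lambda\gtrsim 4\eta_3\beta/\alpha$. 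On the other side, the source $\alpha\langle\nabla^2\mathbb D(u),\nabla^2\tau\rangle$ in the $\tau$-equation, after keeping half the damping, costs at least $\tfrac{\alpha^2}{2\beta}\|\nabla^3u\|_{L^2}^2$, and reabsorbing $\lambda\tfrac{\alpha^2}{2\beta}$ into $\tfrac{\eta_3\alpha}{4}\|\nabla^3u\|_{L^2}^2$ forces $\lambda\lesssim\eta_3\beta/(2\alpha)$. These are incompatible regardless of $\eta_3$; no choice of Young weights rescues this.

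The paper does \emph{not} augment. It keeps the obstruction as $\|\nabla^2\sigma\|_{L^2}^2$ (rather than passing to $\|\nabla^2\tau\|_{L^2}^2$) and splits it at the fixed radius $R$: the high-frequency part obeys $\|\nabla^2\sigma^h\|_{L^2}^2\le R^{-2}\|\nabla^3\sigma\|_{L^2}^2\le CR^{-2}\|\nabla^3\tau\|_{L^2}^2$ (this is \eqref{bu_17}) and is absorbed into the damping $\beta K\|\nabla^3\tau\|_{L^2}^2$ from \eqref{est_third} by taking the cross-term weight $\eta_4$ small --- crucially possible because that damping is \emph{independent} of $\eta_4$, which is exactly what fails in your augmentation. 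The paper further replaces the cross term by its high-frequency version $\langle\Lambda^3u,\Lambda^2\sigma^h\rangle$, which throws off an additional low-frequency forcing $\|\Lambda^3u^l\|_{L^2}^2$. Both this and $\|\nabla^2\sigma^l\|_{L^2}^2$ are then shown to decay like $(1+t)^{-4}$ via Duhamel, using on the interval $[t/2,t]$ the $L^2$ bound $\||\xi|\hat{\mathcal F}_i\|_{L^2}=\|\nabla\mathcal F_i\|_{L^2}\le C(1+s)^{-9/4}$ (from Gagliardo--Nirenberg together with \eqref{utauH1}, \eqref{upper decay2}, \eqref{na2udecay}) rather than the $L^\infty$ bound --- precisely the refinement you anticipate in your last paragraph.
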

	\begin{proof}
		Here, we choose the standard cut-off function  $0\le \varphi_0(\xi)\le 1$ in $C_c^\infty(\mathbb{R}^2)$ such that
		\begin{equation*}
			\varphi_0(\xi) =
			\begin{cases}
				1, \,&for\, |\xi|\le \frac{R}{2},\\
				0, \,&for\, |\xi|\ge R,
			\end{cases}
		\end{equation*}
		where $R$ is  defined in Lemma \ref{lemma_Greenfunction_4}. The low-high-frequency decomposition $(f^l(x), f^h(x))$ for a function $f(x)$ is stated as follows:
		\begin{equation*}
			f^l(x):=\mathcal{F}^{-1}(\varphi_0(\xi)\hat{f}(\xi))\,\,\,\ \text{and} \,\,\,\,f^h(x):=f(x)-f^l(x).
		\end{equation*}	
		Multiplying $\Lambda^3$(\ref{u_sigma_1})$_1$ and $\Lambda^2(\ref{u_sigma_1})^l_2$  by $-\Lambda^2\sigma^l$ and  $-\Lambda^3 u$, respectively, summing the results up, and using integration by parts, we have
		\begin{align}\label{bu_15}
			\begin{split}
				&-\partial_t\langle\Lambda^3 u,\Lambda^2 \sigma^l\rangle\\
				= & \,-\Big(K\langle\Lambda^3\sigma, \Lambda^3 \sigma^l\rangle - \langle\beta\Lambda^2\sigma^l,\Lambda^3 u \rangle - \frac{\alpha}{2}\langle\Lambda^3 u, \Lambda^3 u^l\rangle \Big)\\ &+ \Big(\langle \Lambda^3\mathbb{P}(u\cdot \nabla u),\Lambda^2\sigma^l\rangle + \langle\big(\Lambda \mathbb{P}\udiv(u\cdot \nabla \tau)\big)^l,\Lambda^3 u\rangle\Big)\\
				=: &\,I_7 + I_8.
			\end{split}
		\end{align}	
		For $I_7$ and $I_8$, using (\ref{bu_7}), we have
		\begin{align}
			|I_7|\le\, &K\|\Lambda^3 \sigma\|_{L^2}^2 + \frac{\alpha}{32}\|\Lambda^3 u\|_{L^2}^2 + \frac{8\beta^2}{\alpha}\|\Lambda^2\sigma\|_{L^2}^2 + \frac{\alpha}{2}\|\Lambda^3 u^l\|_{L^2}^2 + \frac{\alpha}{8}\|\Lambda^3 u\|_{L^2}^2,\label{I7}\\
			\nonumber|I_8|\le\, &\frac12\|\Lambda^3 \sigma\|_{L^2}^2
			+  \frac12\|\Lambda^2\mathbb{P}(u\cdot \nabla u)\|_{L^2}^2
			+  \frac{\alpha}{32}\|\Lambda^3 u\|_{L^2}^2 + \frac{8}{\alpha}\|\Lambda\mathbb{P}\udiv(u\cdot \nabla \tau)\|_{L^2}^2\\
			\le\, &\frac12\|\Lambda^3 \sigma\|_{L^2}^2
			+  C\|\nabla^2(u\cdot \nabla u)\|_{L^2}^2 + \frac{\alpha}{32}\|\Lambda^3 u\|_{L^2}^2 + C\|\nabla^2(u\cdot \nabla \tau)\|_{L^2}^2.\label{I8}
		\end{align}
		Substituting (\ref{I7}) and (\ref{I8})  into (\ref{bu_15}), we  get
		\begin{equation}\label{new_H5_L2_1}
			\begin{split}
				\begin{split}
					&-\partial_t\langle\Lambda^3 u,\Lambda^2 \sigma^l\rangle\\
					\leq \,& \frac{\alpha}{2}\|\Lambda^3 u^l\|_{L^2}^2 + \big(\frac{3\alpha}{16} + C\| u\|_{L^\infty}^{2}\big)\|\Lambda^3 u\|_{L^2}^2 + (K + \frac{8\beta^2}{\alpha} + \frac12)\|\nabla^2\sigma\|_{H^1}^2\\
					\, &+    C\big(\|u\|_{L^\infty}^{2}+\|\nabla u\|_{L^\infty}^{2}+\|\nabla \tau\|_{L^\infty}^{2}\big)\big(\|\nabla^2\tau\|_{H^1}^2 + \|\nabla^2 u\|_{L^2}^2\big),
				\end{split}
			\end{split}
		\end{equation}
		where \eqref{bu_21} is used.
		
		Summing \eqref{est_third_u} ($\mu=0$) and \eqref{new_H5_L2_1} up, and using the smallness of the solution stated in Theorem \ref{wellposedness}, we get
		\begin{equation}\label{new_H5_L2_2}
			\begin{split}
				&\partial_t\langle\Lambda^3 u,\Lambda^2 \sigma^h\rangle + \frac{\alpha}{16}\|\Lambda^3 u\|_{L^2}^2\\
				\leq \, &\frac{\alpha}{2}\|\Lambda^3 u^l\|_{L^2}^2  + (2K + \frac{12\beta^2}{\alpha} + 1)\|\nabla^2\sigma\|_{H^1}^2 \\
				&  + C\big(\|u\|_{L^\infty}^{2}+\|\nabla u\|_{L^\infty}^{2}+\|\nabla \tau\|_{L^\infty}^{2}\big)\big(\|\nabla^2\tau\|_{H^1}^2 + \|\nabla^2 u\|_{L^2}^2\big).
			\end{split}
		\end{equation}
		Letting $\eta_4>0$ small enough, then summing 2$\times$\eqref{est_third} ($\mu=0$) and $\eta_4$\eqref{new_H5_L2_2} up, we have
		\begin{equation}\label{new_H5_L2_4}
			\begin{split}
				&\udt \mathcal{H}_4(t) + \frac{\eta_4\alpha}{16}\|\Lambda^3 u\|_{L^2}^2 + \frac{\beta K}{100}\|\nabla^3\tau\|_{L^2}^2\\
				\leq\, & C\big(\|\nabla u\|_{L^\infty}+\|\nabla \tau\|_{L^\infty}\big)\|\nabla^3 u\|_{L^2}^2 + \eta_4\Big(\frac{\alpha}{2} \|\Lambda^3 u^l\|_{L^2}^2
				+ (2K + \frac{12\beta^2}{\alpha} + 1)\|\nabla^2\sigma\|_{H^1}^2 \\
				\, &+ C\big(\|u\|_{L^\infty}^{2}+\|\nabla u\|_{L^\infty}^{2}+\|\nabla \tau\|_{L^\infty}^{2}\big)\big(\|\nabla^2\tau\|_{H^1}^2 + \|\nabla^2 u\|_{L^2}^2\big)\Big),
			\end{split}
		\end{equation} where
		\begin{equation*}
			\mathcal{H}_4:=\alpha\|\nabla^3 u\|_{L^2}^2 + K\|\nabla^3\tau\|_{L^2}^2+\eta_4\langle\Lambda^3 u,\Lambda^2 \sigma^h\rangle=O(\|\nabla^3 u\|_{L^2}^2+\|\nabla^3 \tau\|_{L^2}^2).
		\end{equation*}
		By virtue of (\ref{bu_7}), (\ref{bu_8}) and (\ref{upper decay2}) and the smallness of the solution and $\eta_4$, (\ref{new_H5_L2_4}) yields
		\begin{equation}\label{new_H5_L2_5}
			\begin{split}
				\begin{split}
					&\udt \mathcal{H}_4(t) + \frac{\eta_4\alpha}{32}\|\Lambda^3 u\|_{L^2}^2 + \frac{\beta K}{100}\|\nabla^3\tau\|_{L^2}^2\\
					\leq \,& \eta_4\big(\frac{\alpha}{2} \|\Lambda^3 u^l\|_{L^2}^2
					+ (2K + \frac{12\beta^2}{\alpha} + 1)\|\nabla^2\sigma\|_{L^2}^2\\
					\,&+  C\big(\|u\|_{L^\infty}^{2}+\|\nabla u\|_{L^\infty}^{2}+\|\nabla \tau\|_{L^\infty}^{2}\big)\big(\|\nabla^2\tau\|_{L^2}^2 + \|\nabla^2 u\|_{L^2}^2\big).
				\end{split}
			\end{split}
		\end{equation}
		Using \eqref{bu_8}, we have that
		\begin{equation}\label{bu_17}
			\frac{\beta K}{200}\|\nabla^3\tau\|_{L^2}^2\ge \frac{1}{C}\|\nabla^3\sigma\|_{L^2}^2  \ge \frac{1}{C} R^2\int_{|\xi|\ge R}
			|\xi|^4|\hat{\sigma}|^2	{\rm d}\xi\ge \frac{1}{C}\|\nabla^2\sigma^h\|_{L^2}^2.
		\end{equation}
		In addition, letting
		\begin{equation}\label{bu_18}
			\eta_4 \le \frac{C}{2K + \frac{12\beta^2}{\alpha} + 1},
		\end{equation}
		substituting \eqref{bu_17} and \eqref{bu_18} into  \eqref{new_H5_L2_5}, and using (\ref{utauH1}), \eqref{upper decay2} and \eqref{na2udecay}, we get
		\begin{equation}\label{new_H5_L2_6}
			\begin{split}
				\begin{split}
					\udt \mathcal{H}_4(t) + \bar{c}_3\mathcal{H}_4(t)
					\leq& \, C\|\Lambda^3 u^l\|_{L^2}^2
					+ C\|\nabla^2\sigma^l\|_{L^2}^2\\
					&+  C\big(\|u\|_{L^\infty}^{2}+\|\nabla u\|_{L^\infty}^{2}+\|\nabla \tau\|_{L^\infty}^{2}\big)\big(\|\nabla^2\tau\|_{L^2}^2 + \|\nabla^2 u\|_{L^2}^2\big)\\
					\leq& \, C\|\Lambda^3 u^l\|_{L^2}^2
					+ C\|\nabla^2\sigma^l\|_{L^2}^2 + C(1+t)^{-4},
				\end{split}
			\end{split}
		\end{equation}
		for some positive constants $\bar{c}_3$ and $C$.
		
		By virtue of (\ref{Greenfunction_13-1}), (\ref{utauH1}), and (\ref{upper decay2}), we have
		\begin{align}\label{new_H5_L2_8}
			&\left(\int_{|\xi|\leq R}
			|\xi|^{6}|\hat{u}(t)|^2{\rm d}\xi\right)^\frac{1}{2}\notag\\
			\le & ~C(1+t)^{-2} + C\int_0^t\Big(\int_{|\xi|\leq R}|\xi|^{6}e^{-2\theta|\xi|^2(t-s)}(|\hat{\mathcal{F}}_1 (\xi,s)|^2 + |\hat{\mathcal{F}}_2 (\xi,s)|^2){\rm d}\xi\Big)^\frac12{\rm d}s\notag\\
			\le &~C(1+t)^{-2} +C\int_0^{\frac{t}{2}}(1+t-s)^{-2}(\|\hat{\mathcal{F}}_1 (\cdot,s)\|_{L^\infty} + \|\hat{\mathcal{F}}_2 (\cdot,s)\|_{L^\infty}){\rm d}s \\
			&+C\int_{\frac{t}{2}}^{t}\Big(\int_{|\xi|\leq R}|\xi|^{4}\frac{(1+t-s)^{2}}{(1+t-s)^{2}}e^{-2\theta|\xi|^2(1+t-s)}|\xi|^2(|\hat{\mathcal{F}}_1 (\xi,s)|^2 + |\hat{\mathcal{F}}_2 (\xi,s)|^2){\rm d}\xi\Big)^\frac12{\rm d}s\notag\\
			\le &~C(1+t)^{-2} +C\int_{\frac{t}{2}}^{t}(1+t-s)^{-1}\Big(\big\||\xi|\hat{\mathcal{F}}_1 (\xi,s)\big\|_{L^2} + \big\||\xi|\hat{\mathcal{F}}_2 (\xi,s)\big\|_{L^2}  \Big){\rm d}s,\notag
		\end{align} where we have used
		\begin{equation*}
			\begin{split}
				\|\hat{\mathcal{F}}_1(\xi,s) \|_{L^\infty} + \|\hat{\mathcal{F}}_2(\xi,s)\|_{L^\infty}\le& C\|u(s)\|_{L^2}\big(\|\nabla u(s)\|_{L^2}+\|\nabla \tau(s)\|_{L^2}\big)
				\\ \le& C(1+s)^{-\frac{3}{2}}.
			\end{split}
		\end{equation*}		
		Using \eqref{upper decay2} and \eqref{na2udecay} and Gagliardo-Nirenberg inequailty, we have
		\begin{equation}\label{new_H5_L2_9}
			\begin{split}
				\big\||\xi|\hat{\mathcal{F}}_1 (\xi,s)\big\|_{L^2}&\le C\|\nabla(u\cdot\nabla u)\|_{L^2}\\
				&\le C\big(\| u\|_{L^\infty}\|\nabla^2 u\|_{L^2} + \|\nabla u\|_{L^\infty}\|\nabla u\|_{L^2}\big)\\
				&\le C\big(\| u\|_{L^2}^{\frac12}\|\nabla^2 u\|_{L^2}^{\frac12}\|\nabla^2 u\|_{L^2} + \|\nabla u\|_{L^2}^{\frac12}\|\nabla^3 u\|_{L^2}^{\frac12}\|\nabla u\|_{L^2}\big)\\
				&\le C(1+s)^{-\frac{9}{4}}.
			\end{split}
		\end{equation}
		Similarly, we have
		\begin{equation}\label{new_H5_L2_10}
			\begin{split}
				\||\xi|\hat{\mathcal{F}}_2 (\xi,s)\|_{L^2}&\le C(1+s)^{-\frac{9}{4}}.
			\end{split}
		\end{equation}
		Subtituting \eqref{new_H5_L2_9} and \eqref{new_H5_L2_10} into  \eqref{new_H5_L2_8}, we can deduce that
		\begin{equation}\label{new_H5_L2_11}
			\begin{split}
				\|\Lambda^3 u^l\|_{L^2}^2&\le C(1+s)^{-4}.
			\end{split}
		\end{equation}
		It is worth noticing that $\|\nabla^2\sigma^l\|_{L^2}$ has the similar structure as $\|\Lambda^3 u^l\|_{L^2}$ (see the proof of Lemma \ref{lemma_Greenfunction_7}). Thus we have
		\begin{equation}\label{new_H5_L2_12}
			\begin{split}
				\|\nabla^2\sigma^l\|_{L^2}^2&\le C(1+s)^{-4}.
			\end{split}
		\end{equation}
		Substituting \eqref{new_H5_L2_11} and \eqref{new_H5_L2_12} into  \eqref{new_H5_L2_6}, we get
		\begin{equation}\label{new_H5_L2_13}
			\begin{split}
				\begin{split}
					\udt \mathcal{H}_4(t) + \bar{c}_3\mathcal{H}_4(t)\le C(1+t)^{-4}.
				\end{split}
			\end{split}
		\end{equation}
		Then, \eqref{na3udecay} can be obtained by using \eqref{new_H5_L2_13}.
	\end{proof}
	\begin{corollary}\label{cor2}
		Under the assumptions of Theorem \ref{thm_OB_d_decay}, we have
		\begin{equation}\label{new_tau_L2_}
			\begin{split}
				\|\nabla\tau(t)\|_{L^2}^2\le C (1+t)^{-3},\,\,\,\,
				\|\nabla^2\tau(t)\|_{L^2}^2\le C (1+t)^{-4}.
			\end{split}
		\end{equation}for all $t>0.$
	\end{corollary}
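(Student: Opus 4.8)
The plan is to reuse the differential inequality (\ref{new_tau_L2_1}) from the proof of Corollary \ref{cor1}, now with $k=1$ and $k=2$, and to exploit the fact that the zeroth-order damping $\beta\tau$ transfers the faster decay of the velocity derivatives to the stress derivatives. The crucial structural point is that each $\|\nabla^k\tau\|_{L^2}^2$ obeys an ODE of the form $\frac{\mathrm{d}}{\mathrm{d}t}y + \beta y \le (\text{forcing})$ with a \emph{constant} damping rate $\beta>0$; consequently $y(t)$ inherits the polynomial decay rate of the forcing rather than the slower rate coming from a pure energy estimate. No spatial diffusion in $\tau$ is needed for this, only the pointwise damping.

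First I would take $k=1$ in (\ref{new_tau_L2_1}), whose right-hand side is $C\|\nabla^2 u\|_{L^2}^2 + C\|\nabla^2\tau\|_{L^2}^2\|u\|_{L^\infty}^2 + \|\nabla\tau\|_{L^\infty}^2\|\nabla u\|_{L^2}^2$. The leading term is bounded by $C(1+t)^{-3}$ via Lemma \ref{lemma_upper_decay_3}. For the nonlinear remainders I would invoke the two-dimensional Gagliardo--Nirenberg inequality $\|f\|_{L^\infty}\le C\|f\|_{L^2}^{1/2}\|\nabla^2 f\|_{L^2}^{1/2}$ together with the already-established rates: Lemmas \ref{lemma_upper_decay+1} and \ref{lemma_upper_decay_3} give $\|u\|_{L^\infty}^2\le C(1+t)^{-2}$, while Lemmas \ref{lemma_upper_decay_2} and \ref{lemma_upper_decay_4} give $\|\nabla\tau\|_{L^\infty}^2\le C(1+t)^{-3}$. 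Hence both remainder terms are $O((1+t)^{-5})$ and are absorbed into the leading order, yielding
\begin{equation*}
	\frac{1}{2}\frac{\mathrm{d}}{\mathrm{d}t}\|\nabla\tau\|_{L^2}^2 + \frac{\beta}{2}\|\nabla\tau\|_{L^2}^2 \le C(1+t)^{-3}.
\end{equation*}

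Next I would integrate this with the integrating factor $e^{\beta t}$: multiplying and integrating over $[0,t]$ gives $e^{\beta t}\|\nabla\tau(t)\|_{L^2}^2 \le \|\nabla\tau_0\|_{L^2}^2 + C\int_0^t e^{\beta s}(1+s)^{-3}\,\mathrm{d}s$, and an integration by parts shows $\int_0^t e^{\beta s}(1+s)^{-3}\,\mathrm{d}s \le \frac{C}{\beta}e^{\beta t}(1+t)^{-3}$, so that the exponentially-decaying initial-data term is negligible and $\|\nabla\tau(t)\|_{L^2}^2\le C(1+t)^{-3}$. The second estimate is entirely analogous: taking $k=2$ in (\ref{new_tau_L2_1}), the leading forcing term $C\|\nabla^3 u\|_{L^2}^2$ is $\le C(1+t)^{-4}$ by Lemma \ref{lemma_upper_decay_4}, while the remainders $\|\nabla^3\tau\|_{L^2}^2\|u\|_{L^\infty}^2$ and $\|\nabla\tau\|_{L^\infty}^2\|\nabla^2 u\|_{L^2}^2$ are $O((1+t)^{-6})$ by the same interpolation bounds and Lemmas \ref{lemma_upper_decay_3}--\ref{lemma_upper_decay_4}; the identical integrating-factor argument then produces $\|\nabla^2\tau(t)\|_{L^2}^2\le C(1+t)^{-4}$.

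I do not expect a genuine obstacle here: the whole mechanism is the constant-rate damping, and the only care required is verifying that all nonlinear terms are \emph{strictly} higher order, which is guaranteed by the cascade of decay rates in Lemmas \ref{lemma_upper_decay+1}--\ref{lemma_upper_decay_4} combined with 2D Gagliardo--Nirenberg interpolation. The mildest subtlety is that the interpolation bound for $\|\nabla\tau\|_{L^\infty}$ already uses $\|\nabla^3\tau\|_{L^2}$, so I would be careful to cite only the previously-proved lemmas and never the conclusion of the corollary itself, keeping the argument strictly forward.
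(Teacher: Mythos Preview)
Your proposal is correct and follows essentially the same route as the paper: the authors also invoke the differential inequality \eqref{new_tau_L2_1} for $k=1,2$ together with the decay rates from Lemmas \ref{lemma_upper_decay_2}, \ref{lemma_upper_decay_3}, and \ref{lemma_upper_decay_4}, and then use the constant damping $\beta$ to read off the faster polynomial rates via Gronwall. Your write-up simply makes explicit the Gagliardo--Nirenberg interpolations and the integrating-factor step that the paper leaves to the reader.
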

	\begin{proof}
		\eqref{new_tau_L2_} can be obtained by using (\ref{upper decay2}), \eqref{new_tau_L2_1}, \eqref{na2udecay}, and \eqref{na3udecay}.
	\end{proof}
	
	\medskip
	Noticing that \eqref{new_tau_L2_1} can not be used to get further time-decay rate of the quantity $\|\nabla^3 \tau(t)\|_{L^2}$, since the decay of $\|\nabla^4 u(t)\|_{L^2}$ is unknown. Here we consider a combination of $\|\nabla^3 u^h(t)\|_{L^2}^2$ and $\|\nabla^3 \tau(t)\|_{L^2}^2$.
	\begin{lemma}\label{lemma_upper_decay_5}
		Under the assumptions of Theorem \ref{thm_OB_d_decay}, we have
		\begin{equation}\label{highest_order_decay_0}
			\begin{split}
				\|\nabla^3 u^h(t)\|_{L^2}^2 + \|\nabla^3 \tau(t)\|_{L^2}^2\le C (1+t)^{-5},
			\end{split}
		\end{equation}
		for all $t>0.$
	\end{lemma}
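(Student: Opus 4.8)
The plan is to build a top-order Lyapunov functional that pairs the \emph{high-frequency} part of $\nabla^3 u$ with $\nabla^3\tau$, namely
\begin{equation*}
	\mathcal{H}_5(t):=\alpha\|\nabla^3 u^h\|_{L^2}^2+K\|\nabla^3\tau\|_{L^2}^2+\eta_5\langle\Lambda^3 u^h,\Lambda^2\sigma^h\rangle,
\end{equation*}
with $\eta_5>0$ small, and to derive $\udt\mathcal{H}_5+c\,\mathcal{H}_5\le C(1+t)^{-5}$. The point of replacing $\|\nabla^3 u\|_{L^2}^2$ by $\|\nabla^3 u^h\|_{L^2}^2$ is that the low-frequency part of $\nabla^3 u$ decays only like the optimal velocity rate $(1+t)^{-2}$ in $L^2$, so the full norm cannot reach $(1+t)^{-5}$; the high frequencies, governed by the eigenvalues $\lambda_\pm$ whose real part equals $-\beta/2$ for $|\xi|$ large, decay exponentially in the linear part and do not obstruct the rate.

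First I would run the combined $u^h$–$\tau$ energy estimate: apply $\nabla^3$ to the $u$-equation, project to high frequencies and test against $\alpha\nabla^3 u^h$, and apply $\nabla^3$ to the $\tau$-equation and test against $K\nabla^3\tau$. The crucial observation replacing \eqref{est_third} is that the two linear coupling terms $\alpha K\langle\nabla^3\udiv\tau^h,\nabla^3 u^h\rangle$ and $\alpha K\langle\nabla^3\mathbb{D}(u),\nabla^3\tau\rangle$ cancel (since $\tau$ is symmetric and $\langle\udiv\tau,w\rangle=-\langle\tau,\mathbb{D}(w)\rangle$), and by Plancherel orthogonality of the high/low pieces the surviving coupling is the \emph{purely low-frequency} quantity $\alpha K\langle\nabla^3\mathbb{D}(u)^l,\nabla^3\tau^l\rangle$. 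This leaves $\beta K\|\nabla^3\tau\|_{L^2}^2$ as the only $\tau$-dissipation. The velocity dissipation $\frac{\alpha}{2}\|\Lambda^3 u^h\|_{L^2}^2$ is then produced, exactly as in \eqref{bu_4}, by the cross term $\langle\Lambda^3 u^h,\Lambda^2\sigma^h\rangle$; its linear error terms $K\|\Lambda^3\sigma^h\|_{L^2}^2$ and $\|\Lambda^2\sigma^h\|_{L^2}^2$ are absorbed into $\beta K\|\nabla^3\tau\|_{L^2}^2$ via \eqref{bu_8} and the high-frequency gain $\|\nabla^2\sigma^h\|_{L^2}^2\le C\|\nabla^3\tau\|_{L^2}^2$ of \eqref{bu_17}, after fixing $\eta_5$ small.

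The main obstacle is to show the leftover low-frequency coupling decays fast enough, i.e.
\begin{equation*}
	\alpha K\langle\nabla^3\mathbb{D}(u)^l,\nabla^3\tau^l\rangle\le\tfrac{\beta K}{8}\|\nabla^3\tau\|_{L^2}^2+C\|\nabla^4 u^l\|_{L^2}^2,\qquad \|\nabla^4 u^l\|_{L^2}^2\le C(1+t)^{-5}.
\end{equation*}
The naive bound $\|\nabla^4 u^l\|_{L^2}\le R\|\nabla^3 u^l\|_{L^2}\sim(1+t)^{-2}$ only gives $(1+t)^{-4}$ and is the crux of the difficulty. Instead I would use the Duhamel representation \eqref{Greenfunction_13-1} and Lemma \ref{lemma_Greenfunction_7} with $k=4$: the linear part gives $\big(\int_{|\xi|\le R}|\xi|^8 e^{-2\theta|\xi|^2t}\,\ud\xi\big)^{1/2}\le C(1+t)^{-5/2}$, while for the Duhamel term I would split $\int_0^t=\int_0^{t/2}+\int_{t/2}^t$, estimating the far part by $\|\hat{\mathcal{F}}_i\|_{L^\infty}\le C(1+s)^{-3/2}$ and the near-diagonal part by writing $|\xi|^8=|\xi|^4\cdot|\xi|^4$ and using $\sup_\xi|\xi|^4 e^{-\theta|\xi|^2(t-s)}\le C(1+t-s)^{-2}$ together with the sharper nonlinear bound $\||\xi|^2\hat{\mathcal{F}}_i\|_{L^2}\le C\|\nabla^2\mathcal{F}_i\|_{L^2}\le C(1+s)^{-3}$ (from Lemma \ref{lemma_upper_decay_4} and Corollary \ref{cor2}, in the spirit of \eqref{new_H5_L2_9}). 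This yields $\|\nabla^4 u^l\|_{L^2}\le C(1+t)^{-5/2}$, hence the desired $(1+t)^{-5}$ for its square; the analogous low-frequency $\sigma$-terms are handled identically.

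Finally I would dispatch the nonlinear advection terms $\langle\nabla^3(u\cdot\nabla\tau),\nabla^3\tau\rangle$, $\langle\nabla^3\mathbb{P}(u\cdot\nabla u)^h,\nabla^3 u^h\rangle$ and the cross-term nonlinearities: the top-order pieces either vanish by $\udiv u=0$ or, after commuting with the frequency cut-off, reduce to factors involving $\nabla^4 u^l$ (controlled above) or $\|\nabla u\|_{L^\infty}$, and are absorbed into the dissipation using the smallness from Theorem \ref{wellposedness} and the established rates of Lemmas \ref{lemma_upper_decay_2}--\ref{lemma_upper_decay_4} and Corollary \ref{cor2}; all remaining contributions are $O((1+t)^{-5})$. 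Collecting everything gives $\udt\mathcal{H}_5+c\,\mathcal{H}_5\le C(1+t)^{-5}$, and multiplying by $(1+t)^{5}$ and integrating (using that $\mathcal{H}_5$ is bounded on finite intervals and comparable to $\|\nabla^3 u^h\|_{L^2}^2+\|\nabla^3\tau\|_{L^2}^2$) yields \eqref{highest_order_decay_0}.
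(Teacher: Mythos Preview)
Your proposal is correct and follows essentially the paper's own argument: the same functional $\mathcal{H}_5$ with the cross term $\langle\Lambda^3 u^h,\Lambda^2\sigma^h\rangle$ supplying the $\|\Lambda^3 u^h\|_{L^2}^2$ dissipation, the same key estimate $\|\nabla^4 u^l\|_{L^2}\le C(1+t)^{-5/2}$ via the Duhamel split $\int_0^{t/2}+\int_{t/2}^t$ with $\||\xi|^2\hat{\mathcal{F}}_i\|_{L^2}\le C(1+s)^{-3}$ on the near part, and the same commutator-type treatment of $\langle\nabla^3(u\cdot\nabla u)^h,\nabla^3 u^h\rangle$ (the paper introduces $u^{\tilde h}$ with multiplier $(1-\varphi_0)^2$ and splits $u=u^{\tilde l}+u^{\tilde h}$, which is exactly your ``commuting with the cut-off''). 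One small correction: with the smooth cut-off $\varphi_0$ the high and low pieces are \emph{not} Plancherel-orthogonal, so the surviving linear coupling is actually $\alpha K\big(\langle\nabla^3\mathbb{D}(u^h),\nabla^3\tau^l\rangle+\langle\nabla^3\mathbb{D}(u^l),\nabla^3\tau\rangle\big)$ rather than the single low--low term you wrote; the paper observes the first equals $\langle\nabla^3\mathbb{D}(u^l),\nabla^3\tau^h\rangle$ (both reduce to $\int\varphi_0(1-\varphi_0)\cdots$ in Fourier) and bounds each by $\tfrac{\beta K}{4}\|\nabla^3\tau\|_{L^2}^2+C\|\nabla^4 u^l\|_{L^2}^2$, so your estimate for this piece is right and the argument is unaffected.
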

	\begin{proof}
		Multiplying $\nabla^3$(\ref{Oldroyd_B_d})$_1^h$ and $\nabla^3$ (\ref{Oldroyd_B_d})$_2$ by $\alpha  \nabla^3 u^h$ and  $K  \nabla^3 \tau$, respectively, summing the results up, and using integration by parts, we have
		\begin{equation}\label{highest_order_decay_1}
			\begin{split}
				&\frac12 \udt (\alpha\|\nabla^3 u^h\|_{L^2}^2 + K\|\nabla^3\tau\|_{L^2}^2) + \beta K\|\nabla^3\tau\|_{L^2}^2\\
				= & -\, \Big(\langle\alpha \nabla^3(u\cdot\nabla u)^h,\nabla^3u^h \rangle + \langle K \nabla^3(u\cdot\nabla\tau),\nabla^3\tau \rangle\Big)\\
				& + \, \Big(\langle \alpha \nabla^3\mathbb{D}(u^h),K\nabla^3\tau^l \rangle + \langle \alpha \nabla^3\mathbb{D}(u^l),K\nabla^3\tau \rangle\Big)\\
				=: &\,I_{9} + I_{10},
			\end{split}
		\end{equation}
		where we have used $$\langle K \nabla^3{\rm div}\tau^h,\alpha\nabla^3u^h \rangle = -\,\langle \alpha \nabla^3\mathbb{D}(u^h),K\nabla^3\tau^h \rangle.$$
		Using the Plancherel's Theorem, we obtain
		\begin{equation}\label{highest_order_decay_2}
			\begin{split}
				\langle\alpha \nabla^3(u\cdot\nabla u)^h,\nabla^3u^h \rangle
				= &\,\langle\alpha|\xi|^3 \big(1-\varphi_0(\xi)\big)\widehat{u\cdot\nabla u},\,\,\,|\xi|^3\big(1-\varphi_0(\xi)\big)\hat{u} \rangle\\
				= &\,\langle\alpha|\xi|^3\widehat{u\cdot\nabla u},\,\,\,|\xi|^3(1-\varphi_0(\xi))^2\hat{u}\rangle\\
				= &\,\langle\alpha\nabla^3(u\cdot\nabla u),\,\,\nabla^3u^{\widetilde{h}}\rangle,
			\end{split}
		\end{equation}
		where we define that
		\begin{equation*}
			u^{\widetilde{h}}(x):=\mathcal{F}^{-1}((1-\varphi_0(\xi))^2\hat{u}(\xi))\,\,\,\ \text{and} \,\,\,\,u^{\widetilde{l}}(x):=u(x)-u^{\widetilde{h}}(x).
		\end{equation*}
	
		Next, we can divide the term $\langle\alpha\nabla^3(u\cdot\nabla u),\nabla^3u^{\widetilde{h}}\rangle$ in (\ref{highest_order_decay_2}) into two parts, and using  (\ref{upper decay2}),  (\ref{na2udecay}) and  (\ref{na3udecay}), we have
		\begin{equation}\label{highest_order_decay_3}
			\begin{split}
				&\big|\langle\alpha\nabla^3(u\cdot\nabla u),\nabla^3u^{\widetilde{h}}\rangle\big|\\
				= &\,\big|\langle\alpha\nabla^3(u\cdot\nabla u^{\widetilde{h}}),\nabla^3u^{\widetilde{h}}\rangle + \langle\alpha\nabla^3(u\cdot\nabla u^{\widetilde{l}}),\nabla^3u^{\widetilde{h}}\rangle\big|\\
				\le &\,C\Big(\|\nabla u\|_{L^\infty}\|\nabla^3 u^{\widetilde{h}}\|_{L^2}^2 + \|\nabla u^{\widetilde{h}}\|_{L^\infty}\|\nabla^3 u\|_{L^2}\|\nabla^3 u^{\widetilde{h}}\|_{L^2}\\&+\|\nabla^2u\|_{H^1}\|\nabla^2u^{\widetilde{h}}\|_{H^1}\|\nabla^3u^{\widetilde{h}}\|_{L^2}
				+ \|\nabla u\|_{L^\infty}\|\nabla^3 u^{\widetilde{l}}\|_{L^2}\|\nabla^3 u^{\widetilde{h}}\|_{L^2}\\ & + \|\nabla u^{\widetilde{l}}\|_{L^\infty}\|\nabla^3 u\|_{L^2}\|\nabla^3 u^{\widetilde{h}}\|_{L^2}+ \|\nabla^2 u^{\widetilde{l}}\|_{H^1}\|\nabla^2 u\|_{H^1}\|\nabla^3 u^{\widetilde{h}}\|_{L^2} \\&+\|u\|_{L^\infty}\|\nabla^4 u^{\widetilde{l}}\|_{L^2}\|\nabla^3 u^{\widetilde{h}}\|_{L^2}\Big)\\
				\le& C(1+t)^{-5} + C(1+t)^{-\frac{5}{2}}\|\nabla^4 u^{\widetilde{l}}\|_{L^2}.
			\end{split}
		\end{equation}
		Similar with (\ref{new_H5_L2_8}) and (\ref{new_H5_L2_9}), we have
		\begin{equation}\label{highest_order_decay_}
			\begin{split}
				\|\nabla^4 u^{\widetilde{l}}\|_{L^2}&\le C(1+t)^{-\frac{5}{2}} + C\int_{\frac{t}{2}}^{t}(1+t-s)^{-1}\Big(\||\xi|^2\hat{\mathcal{F}}_1 (\xi,s)\|_{L^2} + \||\xi|^2\hat{\mathcal{F}}_2 (\xi,s)\|_{L^2}  \Big){\rm d}s\\
				&\le C(1+t)^{-\frac{5}{2}} + C\int_{\frac{t}{2}}^{t}(1+t-s)^{-1}(\|\nabla^2(u\cdot\nabla u)\|_{L^2} + \|\nabla^2(u\cdot\nabla \tau)\|_{L^2}) {\rm d}s\\
				&\le C(1+s)^{-\frac{5}{2}}.
			\end{split}
		\end{equation}		
		Substituting \eqref{highest_order_decay_3} and \eqref{highest_order_decay_} into  \eqref{highest_order_decay_1}, we have
		\begin{equation}\label{highest_order_decay_4}
			\begin{split}
				|I_{9}|&\le C\,(1+t)^{-5} + C (\|\nabla u\|_{L^\infty}\|\nabla^3\tau\|_{L^2}^2 + \|\nabla \tau\|_{L^\infty}\|\nabla^3u\|_{L^2}\|\nabla^3\tau\|_{L^2})\\
				&\le C\,(1+t)^{-5}.
			\end{split}
		\end{equation}
		Noticing that
		\begin{equation*}\label{highest_order_decay_5}
			\begin{split}
				\langle \alpha \nabla^3\mathbb{D}(u^h),K\nabla^3\tau^l \rangle = \langle \alpha \nabla^3\mathbb{D}(u^l),K\nabla^3\tau^h \rangle
				\le\frac{\beta K}{4}\|\nabla^3 \tau\|_{L^2}^2 + \frac{\alpha^2 K}{\beta}\|\nabla^4 u^{l}\|_{L^2}^2,
			\end{split}
		\end{equation*}
		and
		$$\langle \alpha \nabla^3\mathbb{D}(u^l),K\nabla^3\tau \rangle\le\frac{\beta K}{4}\|\nabla^3 \tau\|_{L^2}^2 + \frac{\alpha^2 K}{\beta}\|\nabla^4 u^{l}\|_{L^2}^2,$$ we can estimate $I_{10}$ as follows:
		\begin{equation}\label{highest_order_decay_6}
			\begin{split}
				|I_{10}|\le \frac{\beta K}{2}\|\nabla^3 \tau\|_{L^2}^2 + \frac{2\alpha^2 K}{\beta}\|\nabla^4 u^{l}\|_{L^2}^2.
			\end{split}
		\end{equation}
		Substituting  \eqref{highest_order_decay_4}  and \eqref{highest_order_decay_6} into \eqref{highest_order_decay_1},  we get
		\begin{equation}\label{highest_order_decay_7}
			\begin{split}
				\frac12 \udt (\alpha\|\nabla^3 u^h\|_{L^2}^2 + K\|\nabla^3\tau\|_{L^2}^2) + \frac{\beta K}{2}\|\nabla^3\tau\|_{L^2}^2\le C\,(1+t)^{-5},
			\end{split}
		\end{equation} where the estimate of the second term on the right-hand side of (\ref{highest_order_decay_6}) is similar to that of (\ref{highest_order_decay_}).
		
		Multiplying $\Lambda^3$(\ref{u_sigma_1})$_1^h$ and $\Lambda^2(\ref{u_sigma_1})_2^h$ by $\Lambda^2\sigma^h$ and  $\Lambda^3 u^h$, respectively, summing the results up, and using integration by parts, we have
		\begin{equation}\label{highest_order_decay_8}
			\begin{split}
				&\partial_t\langle\Lambda^3 u^h,\Lambda^2 \sigma^h\rangle + \frac{\alpha}{2}\|\Lambda^3 u^h\|_{L^2}^2\\
				= & \,\Big(K\|\Lambda^3 \sigma^h\|_{L^2}^2  - \langle\beta\Lambda^2\sigma^h,\Lambda^3 u^h \rangle\Big)\\ &- \Big(\langle \Lambda^3\mathbb{P}(u\cdot \nabla u)^h,\Lambda^2\sigma^h\rangle + \langle \Lambda\mathbb{P}\udiv(u\cdot \nabla \tau)^h,\Lambda^3 u^h\rangle\Big)\\
				=: &\,I_{11} - I_{12}.
			\end{split}
		\end{equation}
		Then, $I_{11}$ and $I_{12}$ can be estimated as follows:
		\begin{align}
			\label{new6}	|I_{11}|&\le K\|\Lambda^3 \sigma^h\|_{L^2}^2 + \frac{\alpha}{16}\|\Lambda^3 u^h\|_{L^2}^2  + \frac{4\beta^2}{\alpha}\|\Lambda^2\sigma^h\|_{L^2}^2,\\
			\nonumber|I_{12}|&\le \frac12\|\Lambda^3 \sigma^h\|_{L^2}^2
			+  C\|\Lambda^2\mathbb{P}(u\cdot \nabla u)^h\|_{L^2}^2 +  \frac{\alpha}{16}\|\Lambda^3 u^h\|_{L^2}^2 + C\|\Lambda\mathbb{P}\udiv(u\cdot \nabla \tau)^h\|_{L^2}^2\\ \label{new7}
			&\le \frac12\|\Lambda^3 \sigma^h\|_{L^2}^2
			+  C\|\nabla^2(u\cdot \nabla u)\|_{L^2}^2 + \frac{\alpha}{16}\|\Lambda^3 u^h\|_{L^2}^2 + C\|\nabla^2(u\cdot \nabla \tau)\|_{L^2}^2.
		\end{align}
		Substituting (\ref{new6}) and (\ref{new7}) into (\ref{highest_order_decay_8}), we can easily deduce that
		\begin{equation}\label{highest_order_decay_9}
			\begin{split}
				\partial_t\langle\Lambda^3 u^h,\Lambda^2 \sigma^h\rangle + \frac{\alpha}{4}\|\Lambda^3 u^h\|_{L^2}^2
				\le (K+\frac12)\|\Lambda^3 \sigma^h\|_{L^2}^2 + \frac{4\beta^2}{\alpha}\|\Lambda^2\sigma^h\|_{L^2}^2 + C(1+t)^{-5},
			\end{split}
		\end{equation} for all $t\ge0$.
		
		We define that
		\begin{equation*}
			\begin{split}
				\mathcal{H}_5(t): = \alpha\|\nabla^3 u^h\|_{L^2}^2 + K\|\nabla^3\tau\|_{L^2}^2 + \eta_3\langle\Lambda^3 u^h,\Lambda^2 \sigma^h\rangle.
			\end{split}
		\end{equation*}
		Then
		\begin{equation}\label{highest_order_decay_10}
			\frac12\alpha\|\nabla^3 u^h\|_{L^2}^2 + \frac12K\|\nabla^3\tau\|_{L^2}^2\le\mathcal{H}_5(t)\le 2\alpha\|\nabla^3 u^h\|_{L^2}^2 + 2K\|\nabla^3\tau\|_{L^2}^2.
		\end{equation}	
		Summing $2\times$\eqref{highest_order_decay_7} and $\eta_3\times$\eqref{highest_order_decay_9} up,
		and using \eqref{highest_order_decay_10} and the smallness of $\eta_3$, we have
		\begin{equation}\label{new8}
			\udt \mathcal{H}_5(t) + \bar{c}_4\mathcal{H}_5(t)\le C\,(1+t)^{-5},
		\end{equation}
		for some positive constants $\bar{c}_4$ and $C$, where we have used
		\begin{equation*}
			\|\Lambda^3 \sigma^h\|_{L^2}^2 +\|\Lambda^2\sigma^h\|_{L^2}^2\le C\|\Lambda^3 \tau\|_{L^2}^2.
		\end{equation*}
		Then, (\ref{new8}) yields (\ref{highest_order_decay_0}). The proof of Lemma \ref{lemma_upper_decay_5} is complete.
		
	\end{proof}
	
	\bigskip
	
	With Lemmas \ref{lemma_upper_decay+1}, \ref{lemma_upper_decay_2}, \ref{lemma_upper_decay_3}, \ref{lemma_upper_decay_4}, and \ref{lemma_upper_decay_5}, and Corollaries \ref{cor1} and \ref{cor2}, we get (\ref{opti1}) and (\ref{opti2}) in Theorem \ref{thm_OB_d_decay}.

	\subsection{Lower time-decay estimates}
	To finish the proof of Theorem \ref{thm_OB_d_decay}, we will establish the lower decay estimates for the system \eqref{Oldroyd_B_d}.
	\begin{lemma}\label{lower_bound}
		Under the assumptions of Theorem \ref{thm_OB_d_decay}, there exists a positive time $t_1$, such that the following estimates:
		\begin{equation}\label{lower_bound_1}
			\|\nabla^k u(t)\|_{L^2} \ge \frac{1}{C} (1 + t)^{-\frac{1}{2}-\frac{k}{2}}, \quad k=0,1,2,3,
		\end{equation}
		and
		\begin{equation}\label{lower_bound_2}
			\|\nabla^k \tau (t)\|_{L^2} \ge \frac{1}{C} (1 + t)^{-1-\frac{k}{2}}, \quad k=0, 1, 2, 3,
		\end{equation}
		hold for all $t\geq t_1$.
	\end{lemma}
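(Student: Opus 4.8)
\emph{Strategy.} The plan is to separate the linear evolution from the nonlinear self‑interaction and to show that the former dominates for large time. Through Duhamel's formula \eqref{Greenfunction_13-1}--\eqref{Greenfunction_13-2} I write $\hat u=\hat u_L+\hat u_N$ and $\hat\sigma=\hat\sigma_L+\hat\sigma_N$, where $\hat u_L=\mathcal G_3\hat u_0+K|\xi|\mathcal G_1\hat\sigma_0$ and $\hat\sigma_L=-\frac\alpha2|\xi|\mathcal G_1\hat u_0+\mathcal G_2\hat\sigma_0$ are the linear parts and $\hat u_N,\hat\sigma_N$ are the time‑convolutions against $\hat{\mathcal F}_1,\hat{\mathcal F}_2$. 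Fixing $R'\le R$ and using the reverse triangle inequality on the ball $|\xi|\le R'$,
\[
\|\nabla^k u\|_{L^2}\ge\left(\int_{|\xi|\le R'}|\xi|^{2k}|\hat u_L|^2\,\ud\xi\right)^{1/2}-\left(\int_{|\xi|\le R'}|\xi|^{2k}|\hat u_N|^2\,\ud\xi\right)^{1/2},
\]
and likewise for $\sigma$. Lemma \ref{lemma_Greenfunction_11} already bounds the linear parts from below by $\frac1C(1+t)^{-\frac12-\frac k2}$ and $\frac1C(1+t)^{-1-\frac k2}$ for $t\ge t_1$, so the entire task reduces to proving that the nonlinear parts decay \emph{strictly} faster than these rates. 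Once the lower bound for $\sigma$ is in hand, \eqref{bu_8} transfers it to $\tau$ via $\|\nabla^k\tau\|_{L^2}\ge\frac1C\|\nabla^k\sigma\|_{L^2}$, which yields \eqref{lower_bound_2} from the bound \eqref{lower_bound_1} established for $\sigma$.

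\emph{Controlling the nonlinear part.} Here I would exploit the divergence form of the sources: since $\udiv u=0$, $\mathcal F_1=-\mathbb P\,\udiv(u\otimes u)$ and $\mathcal F_2=-\Lambda^{-1}\mathbb P\,\udiv\,\udiv(u\otimes\tau)$, whence on $|\xi|\le R$
\[
|\hat{\mathcal F}_1(\xi,s)|\le C|\xi|\,\|u(s)\|_{L^2}^2\le C|\xi|(1+s)^{-1},\qquad |\hat{\mathcal F}_2(\xi,s)|\le C|\xi|\,\|u(s)\|_{L^2}\|\tau(s)\|_{L^2}\le C|\xi|(1+s)^{-\frac32},
\]
the decay rates being exactly those of part i). Splitting the Duhamel integral at $s=t/2$ and using the Gaussian bounds of Lemma \ref{lemma_Greenfunction_4}, the part over $[0,t/2]$, where the kernel is far from the diagonal, absorbs the extra factor $|\xi|$ and is dominated by $C\int_0^{t/2}(1+t-s)^{-1-\frac k2}(1+s)^{-1}\,\ud s\le C(1+t)^{-1-\frac k2}\ln(1+t)$. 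On $[t/2,t]$ I would instead pull the surplus powers of $|\xi|$ into the heat kernel, leaving $\||\xi|\hat{\mathcal F}_1\|_{L^2}\le C\|\nabla(u\cdot\nabla u)\|_{L^2}\le C(1+s)^{-\frac52}$ (by Gagliardo--Nirenberg together with the rates of part i)), which gives $C\int_{t/2}^t(1+t-s)^{-\frac{k-1}2}(1+s)^{-\frac52}\,\ud s$. Both contributions are $o\big((1+t)^{-\frac12-\frac k2}\big)$ for $k=0,1,2,3$ (for $k=0$ one simply keeps the divergence bound on all of $[0,t]$). The identical analysis applied to $\hat\sigma_N$, now using $|\mathcal G_2|\le C(|\xi|^2e^{-\theta|\xi|^2 t}+e^{-\frac{\beta t}2})$, yields $\|\,|\xi|^k\hat\sigma_N\|_{L^2(|\xi|\le R)}=o\big((1+t)^{-1-\frac k2}\big)$.

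\emph{Conclusion and main obstacle.} Combining the two estimates there is a $t_1$, enlarged if necessary, so that for $t\ge t_1$ each nonlinear term is at most half of the corresponding linear lower bound; this gives \eqref{lower_bound_1} for $u$ and, through \eqref{bu_8}, \eqref{lower_bound_2} for $\tau$. The genuine difficulty is that a crude treatment of the Duhamel term, using only $\|\hat{\mathcal F}_i\|_{L^\infty}\le C(1+s)^{-\frac32}$ as in Lemma \ref{lemma_Greenfunction_7}, produces exactly the rate $(1+t)^{-\frac12-\frac k2}$---the \emph{same} as the spectral lower bound---and therefore cannot be separated from it. The half power that tips the balance comes precisely from the divergence structure of $\mathcal F_1,\mathcal F_2$ (the extra factor $|\xi|$ above) in the far regime, paired in the near regime with the fast decay of $\|\nabla(u\cdot\nabla u)\|_{L^2}$. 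The delicate bookkeeping is to arrange which derivatives fall on the smoothing kernel and which on the solution so that never more than $\nabla^3 u$ is required, and to track the constants so that the faster‑decaying nonlinear contribution is dominated by the lower bound of Lemma \ref{lemma_Greenfunction_11} uniformly in $k=0,1,2,3$.
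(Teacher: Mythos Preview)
Your proposal is correct and follows essentially the same route as the paper: split $\hat u$ and $\hat\sigma$ into linear and Duhamel parts, invoke Lemma \ref{lemma_Greenfunction_11} for the lower bound on the linear piece, exploit the divergence structure of $\mathcal F_1,\mathcal F_2$ to extract an additional factor of $|\xi|$, split the time integral at $t/2$, and finally pass from $\sigma$ to $\tau$ via \eqref{bu_8}. The paper's bookkeeping is slightly different---it groups $k=0,1,2$ together and obtains the clean rate $(1+t)^{-\frac34-\frac k2}$ for the nonlinear part of $u$ (and $(1+t)^{-\frac54-\frac k2}$ for $\sigma$), treating $k=3$ separately with $\||\xi|^2\hat{\mathcal F}_i\|$---whereas your estimates carry harmless logarithms; but the mechanism and the conclusion are the same.
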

	\begin{proof} Recalling from (\ref{Greenfunction_13-1}) and (\ref{Greenfunction_13-2}), we have
		\begin{equation*}
			\begin{split}
				\hat{u}(t) =& \mathcal{G}_3 \hat{u}_0 + K|\xi|\mathcal{G}_1\hat{\sigma}_0 + \int_0^t\mathcal{G}_3(t-s)\hat{\mathcal{F}}_1 (s) + K|\xi|\mathcal{G}_1(t-s)\hat{\mathcal{F}}_2 (s)
				{\rm d}s,\\
				\hat{\sigma}(t) =& -\frac{\alpha}{2}|\xi|\mathcal{G}_1 \hat{u}_0 + \mathcal{G}_2\hat{\sigma}_0 + \int_0^t-\frac{\alpha}{2}|\xi|\mathcal{G}_1(t-s)\hat{\mathcal{F}}_1 (s) + \mathcal{G}_2(t-s)\hat{\mathcal{F}}_2 (s){\rm d}s,
			\end{split}
		\end{equation*}
		where
		\begin{eqnarray*}
			\mathcal{F}_1=-\mathbb{P}\left(u\cdot\nabla u\right),\
			\mathcal{F}_2=-\Lambda^{-1}\mathbb{P}{\rm div}\left(u\cdot\nabla\tau\right).
		\end{eqnarray*}
		From \eqref{lemma_Greenfunction_12} and \eqref{lemma_Greenfunction_13}, we can obtain  for all $t\geq t_1$, that
		\begin{equation}\label{lower_bound_3}
			\begin{split}
				\|\nabla^ku(t)\|_{L^2} = &\left\||\xi|^k\hat{u}(t)\right\|_{L^2}\\
				\ge\,&\frac{1}{C}(1 + t)^{-\frac12-\frac{k}{2}}	-
				C\int_0^t \Big(\int_{|\xi|\leq R'}\Big||\xi|^k\mathcal{G}_3(t-s)\hat{\mathcal{F}}_1 (\xi,s)\\& + |\xi|^{k+1}\mathcal{G}_1(t-s)\hat{\mathcal{F}}_2 (\xi,s)\Big|^2{\rm d}\xi\Big)^\frac12{\rm d}s,
			\end{split}
		\end{equation}
		and	
		\begin{equation}\label{lower_bound_4}
			\begin{split}
				\|\nabla^k\sigma(t)\|_{L^2}= &\left\||\xi|^k\hat{\sigma}(t)\right\|_{L^2}\\
				\ge\, &\frac{1}{C}(1 + t)^{-1-\frac{k}{2}}-
				C\int_0^t \Big(\int_{|\xi|\leq R'}\Big||\xi|^{k+1}\mathcal{G}_1(t-s)\hat{\mathcal{F}}_1 (\xi,s)\\ &+ |\xi|^k\mathcal{G}_2(t-s)\hat{\mathcal{F}}_2 (\xi,s)\Big|^2{\rm d}\xi\Big)^\frac12{\rm d}s.
			\end{split}
		\end{equation}	
	
		Now we estimate the nonlinear term in \eqref{lower_bound_3} for $k=0, 1, 2, 3$. In fact,	using Lemmas \ref{lemma_Greenfunction_4} and \ref{lemma_upper_decay}, we obtain that for $k=0, 1, 2$,
		\begin{equation}\label{lower_bound_5}
			\begin{split}
				&  \int_0^t \Big(\int_{|\xi|\leq R'}\Big||\xi|^k\mathcal{G}_3(t-s)\hat{\mathcal{F}}_1 (\xi,s) + |\xi|^{k+1}\mathcal{G}_1(t-s)\hat{\mathcal{F}}_2 (\xi,s)\Big|^2{\rm d}\xi\Big)^\frac12{\rm d}s\\
				\le C &\int_0^{\frac{t}{2}} \left(\|\frac{1}{|\xi|} \hat{\mathcal{F}}_1 (\xi,s)\|_{L^\infty}   + \|\hat{\mathcal{F}}_2 (\xi,s)\|_{L^\infty}\right)\left(\int_{|\xi|\leq R'} |\xi|^{2(k+1)}e^{-2\theta|\xi|^2(t-s )} {\rm d} \xi\right)^{\frac{1}{2}}{\rm d}s \\
				+ &C\int_{\frac{t}{2}}^t \left(\int_{|\xi|\leq R'} e^{-2\theta|\xi|^2(t-s )}\big(|\xi|^{2k}|\hat{\mathcal{F}}_1(\xi,s)|^2 + |\xi|^{2k}|\hat{\mathcal{F}}_2(\xi,s)|^2\big) {\rm d} \xi\right)^{\frac{1}{2}}{\rm d}s \\
				\le C &\int_0^{\frac{t}{2}}(1+s)^{-1}(1+t-s)^{-1-\frac{k}{2}}{\rm d}s + C\int_{\frac{t}{2}}^t(1+s)^{-\frac{3}{2}-\frac{k}{2}}(1+t-s)^{-\frac{1}{2}}{\rm d}s\\
				\le C& \,(1+t)^{-\frac{3}{4}-\frac{k}{2}}.
			\end{split}
		\end{equation}
		For $k=3$, we have
		\begin{equation}\label{lower_bound_7}
			\begin{split}
				&  \int_0^t \Big(\int_{|\xi|\leq R'}\Big||\xi|^3\mathcal{G}_3(t-s)\hat{\mathcal{F}}_1 (\xi,s) + |\xi|^{4}\mathcal{G}_1(t-s)\hat{\mathcal{F}}_2 (\xi,s)\Big|^2{\rm d}\xi\Big)^\frac12{\rm d}s\\
				\le C &\,(1+t)^{-\frac{9}{4}}
				+ C\int_{\frac{t}{2}}^t \left(\big\||\xi|^{2}\hat{\mathcal{F}}_1(\cdot,s)\big\|_{L^\infty} + \big\||\xi|^{2}\hat{\mathcal{F}}_2(\cdot,s)\big\|_{L^\infty}\right)(1+t-s)^{-1}{\rm d}s \\
				\le C& \,(1+t)^{-\frac{9}{4}} + C\int_{\frac{t}{2}}^t(1+s)^{-\frac{5}{2}}(1+t-s)^{-1}{\rm d}s\\
				\le C& \,(1+t)^{-\frac{9}{4}}.
			\end{split}
		\end{equation}
	
		Next, we turn to estimate the nonlinear term in \eqref{lower_bound_4} for $k=0, 1, 2, 3$. Similar to \eqref{lower_bound_5}, using Lemmas \ref{lemma_Greenfunction_4} and \ref{lemma_upper_decay}, we deduce that for  $k=0, 1, 2$,
		\begin{equation}\label{lower_bound_11}
			\begin{split}
				&  \int_0^t \underbrace{\Big(\int_{|\xi|\leq R'}\Big||\xi|^{k+1}\mathcal{G}_1(t-s)\hat{\mathcal{F}}_1 (\xi,s) + |\xi|^k\mathcal{G}_2(t-s)\hat{\mathcal{F}}_2 (\xi,s)\Big|^2{\rm d}\xi\Big)^\frac12}_{N(k,s)}{\rm d}s
				\\=&\int_0^{\frac{t}{2}} N(k,s){\rm d}s + \int_{\frac{t}{2}}^{t} N(k,s){\rm d}s.
			\end{split}
		\end{equation}For the first term of  \eqref{lower_bound_11}, we have
		\begin{align}\label{lower_bound_6}
			\int_0^{\frac{t}{2}} N(k,s){\rm d}s \le C &\int_0^{\frac{t}{2}} \left(\|\frac{1}{|\xi|} \hat{\mathcal{F}}_1 (\cdot,s)\|_{L^\infty}  +\|\hat{\mathcal{F}}_2 (\cdot,s)\|_{L^\infty}\right)\left(\int_{|\xi|\leq R'} |\xi|^{2k+4}e^{-2\theta|\xi|^2(t-s )}{\rm d} \xi \right)^{\frac{1}{2}}{\rm d}s\notag\\
			+ &C\int_{0}^{\frac{t}{2}}\||\xi|^{k}\hat{\mathcal{F}}_2(\cdot,s)\|_{L^\infty}\left(\int_{|\xi|\leq R'} e^{-\beta(t-s)} {\rm d} \xi\right)^{\frac{1}{2}}{\rm d}s\\
			\le C &\int_0^{\frac{t}{2}}(1+s)^{-1}(1+t-s)^{-\frac{3}{2}-\frac{k}{2}}{\rm d}s  +C\,(1+t)^{-\frac{3}{2}-\frac{k}{2}}\notag\\
			\le C&\, (1+t)^{-\frac{5}{4}-\frac{k}{2}}\notag.
		\end{align}For the second term of  \eqref{lower_bound_11}, we have
		\begin{align}
			\int_{\frac{t}{2}}^{t} N(k,s){\rm d}s\le &\,C\int_{\frac{t}{2}}^t \left(\||\xi|^{k}\hat{\mathcal{F}}_1(\cdot,s)\|_{L^\infty} + \||\xi|^{k}\hat{\mathcal{F}}_2(\cdot,s)\|_{L^\infty}\right)\left(\int_{|\xi|\leq R'} |\xi|^{2}e^{-2\theta|\xi|^2(t-s )} {\rm d} \xi\right)^{\frac{1}{2}}{\rm d}s \notag\\
			&+ C\int_{\frac{t}{2}}^t\||\xi|^{k}\hat{\mathcal{F}}_2(\cdot,s)\|_{L^\infty}\left(\int_{|\xi|\leq R'} e^{-\beta(t-s)} {\rm d} \xi\right)^{\frac{1}{2}}{\rm d}s\\
			\le  &C\int_{\frac{t}{2}}^t(1+s)^{-\frac{3}{2}-\frac{k}{2}}(1+t-s)^{-1}{\rm d}s +C\,(1+t)^{-\frac{3}{2}-\frac{k}{2}}\notag\\
			\le &C\, (1+t)^{-\frac{5}{4}-\frac{k}{2}}\notag.
		\end{align}
		For $k=3$, we have
		\begin{equation}\label{lower_bound_8}
			\begin{split}
				&  \int_0^t \Big(\int_{|\xi|\leq R'}\Big||\xi|^{4}\mathcal{G}_1(t-s)\hat{\mathcal{F}}_1 (\xi,s) + |\xi|^3\mathcal{G}_2(t-s)\hat{\mathcal{F}}_2 (\xi,s)\Big|^2{\rm d}\xi\Big)^\frac12{\rm d}s\\
				\le C&\,(1+t)^{-\frac{11}{4}}
				+ C\int_{\frac{t}{2}}^t \left(\||\xi|^{2}\hat{\mathcal{F}}_1(\cdot,s)\|_{L^2} + \||\xi|^{2}\hat{\mathcal{F}}_2(\cdot,s)\|_{L^2}\right)(1+t-s)^{-1}{\rm d}s \\
				\le C& \,(1+t)^{-\frac{11}{4}} + C\int_{\frac{t}{2}}^t\Big(\|\nabla^{2}(u\cdot\nabla u)(s)\|_{L^2} + \|\nabla^{2}(u\cdot\nabla\tau)(s)\|_{L^2}\Big)(1+t-s)^{-1}{\rm d}s\\
				\le C& \,(1+t)^{-\frac{11}{4}},
			\end{split}
		\end{equation}
		where we have used the estimate:
		\begin{equation*}\label{lower_bound_9}
			\begin{split}
				&\|\nabla^{2}(u\cdot\nabla u)(t)\|_{L^2} + \|\nabla^{2}(u\cdot\nabla\tau)(t)\|_{L^2}\\
				\le &C\big(\| u\|_{L^2}^{\frac12}\|\nabla^2 u\|_{L^2}^{\frac12}\|\nabla^3 u\|_{L^2} + \|\nabla u\|_{L^2}^{\frac12}\|\nabla^3 u\|_{L^2}^{\frac12}\|\nabla^2 u\|_{L^2}\big)\\
				&+C\big(\| u\|_{L^2}^{\frac12}\|\nabla^2 u\|_{L^2}^{\frac12}\|\nabla^3 \tau\|_{L^2} + \|\nabla u\|_{L^2}^{\frac12}\|\nabla^3 u\|_{L^2}^{\frac12}\|\nabla^2 \tau\|_{L^2}+  \|\nabla \tau\|_{L^2}^{\frac12}\|\nabla^3 \tau\|_{L^2}^{\frac12}\|\nabla^2 u\|_{L^2}\big)\\
				\le &C (1+t)^{-3},\\
			\end{split}
		\end{equation*}for all $t\geq 0$, which is similar to \eqref{new_H5_L2_9}.
		
		Using \eqref{lower_bound_3} -- \eqref{lower_bound_8}, and the fact that $\|\nabla^k\sigma\|_{L^2}\le C \|\nabla^k\tau\|_{L^2},$ we conclude that \eqref{lower_bound_1} and \eqref{lower_bound_2} hold for $t\ge t_1$ ($t_1$ is sufficiently large), i.e., \eqref{opti3} and \eqref{opti4} hold. Thus the proof of Theorem \ref{thm_OB_d_decay} is complete.
	\end{proof}
	\section*{Acknowledgments}This work was supported by the Guangdong Basic and Applied Basic Research Foundation $\#2020B1515310015$ and \#2022A1515012112, and by the National Natural Science Foundation of China $\#12071152,$ and by the Guangdong Provincial Key Laboratory of Human Digital Twin (\#2022B1212010004).

\end{document}